\newtheorem{theorem}{Theorem}[section]
\newtheorem{lemma}[theorem]{Lemma}
\newtheorem{proposition}[theorem]{Proposition}
\theoremstyle{definition}
\newtheorem{definition}[theorem]{Definition}
\newtheorem{remark}[theorem]{Remark}
\numberwithin{equation}{section}
\let\al=\alpha
\let\b=\beta
\let\g=\gamma
\let\d=\delta
\let\la=\lambda
\let\r=\rho
\let\s=\sigma
\let\om=\omega
\let\G= \Gamma
\let\Om=\Omega
\let\th=\theta
\let\ep=\epsilon
\let\va=\varphi
\def\bbR{\mathbb{R}}
\def\bbS{\mathbb{S}}
\def\bbC{\mathbb{C}}
\newcommand{\be}{\begin{equation*}}
\newcommand{\ee}{\end{equation*}}
\newcommand{\ben}{\begin{equation}}
\newcommand{\een}{\end{equation}}
\newcommand{\bn}{\begin{enumerate}}
\newcommand{\en}{\end{enumerate}}
\newcommand{\bs}{\backslash}
\def\tE{\widetilde{E}}
\def\tF{\widetilde{F}}
\def\tQ{\widetilde{Q}}
\def\cq{\mathcal Q}
\def\apq{{A_{p,\,q}}}
\def\apn{{A_p}}
\def\bmo{{{\rm BMO}(\mathbb R^n)}}
\def\cmo{{{\rm CMO}(\mathbb R^n)}}
\def\bmoa{{{\rm BMO}_{\alpha}(\mathbb R^n)}}
\def\cmoa{{{\rm CMO}_{\alpha}(\mathbb R^n)}}
\def\tcmoa{\widetilde{{\rm CMO}}_{\al}(\bbR^n)}
\def\fz{{\infty}}
\def\apn{{A_p(\mathbb R^n)}}
\def\rr{{\mathbb R}}
\def\rn{{{\rr}^n}}
\def\ls{\lesssim}
\def\lpwp{{L^p_{w^p}(\rn)}}
\def\lipa{{Lip_{\alpha}(\mathbb R^n)}}
\def\coa{\mathcal {O}_{\alpha}}
\def\tcoa{\widetilde{\mathcal {O}_{\alpha}}}
\def\co{\mathcal {O}}
\def\cfq{\mathcal {F}_{Q}}
\def\cf{\mathcal {F}}
\def\cq{\mathcal {Q}}
\def\scrm{\mathscr{M}}
\begin{document}
\title[Commutators associated with Lipschitz functions]
{Boundedness and compactness of commutators associated with Lipschitz functions}
\author{WEICHAO GUO}
\address{School of Mathematics and Information Sciences, Guangzhou University, Guangzhou, 510006, P.R.China}
\email{weichaoguomath@gmail.com}
\author{JIANXUN HE}
\address{School of Mathematics and Information Sciences, Guangzhou University, Guangzhou, 510006, P.R.China}
\email{hejianxun@gzhu.edu.cn}
\author{HUOXIONG WU}
\address{School of Mathematical Sciences, Xiamen University,
Xiamen, 361005, P.R. China} \email{huoxwu@xmu.edu.cn}
\author{DONGYONG YANG}
\address{School of Mathematical Sciences, Xiamen University,
Xiamen, 361005, P.R. China} \email{dyyang@xmu.edu.cn}

\begin{abstract}
Let $\alpha\in (0, 1]$, $\beta\in [0, n)$ and $T_{\Omega,\beta}$ be a singular or fractional integral operator
with homogeneous kernel $\Omega$. In this article, a CMO type space
${\rm CMO}_\alpha(\mathbb R^n)$ is introduced and studied. In particular,
the relationship between ${\rm CMO}_\alpha(\mathbb R^n)$ and the Lipchitz space $Lip_\alpha(\mathbb R^n)$
is discussed. Moreover, a necessary condition of restricted boundedness of the iterated commutator $(T_{\Omega,\beta})^m_b$
on weighted Lebesgue spaces via functions in $Lip_\alpha(\mathbb R^n)$,
 and an equivalent characterization of the compactness
for $(T_{\Omega,\beta})^m_b$ via functions in ${\rm CMO}_\alpha(\mathbb R^n)$ are obtained.
Some results are new even in the unweighted setting for the first order commutators.
\end{abstract}
\subjclass[2010]{42B20; 42B25.}
\keywords{compactness, commutator, singular integral, fractional integral, Lipschitz function}
\thanks{Supported by the NSF of China (Nos.11771358, 11471041, 11701112, 11671414, 11571289), the NSF of Fujian Province of China (Nos.2015J01025, 2017J01011)
and the China postdoctoral Science Foundation (No. 2017M612628).}

\maketitle

\section{Introduction and preliminaries}\label{s1}
Let $\b\in [0,n)$. The singular or fractional integral operator with homogeneous kernel is defined by
\begin{equation}\label{integral operator, linear case}
  T_{\Omega,\,\b}f(x):=\int_{\mathbb{R}^n}\frac{\Omega(x-y)}{|x-y|^{n-\b}}f(y)dy,
\end{equation}
where $\Omega$ is a homogeneous function of degree zero and satisfies the following mean value zero property when $\b= 0$:
\begin{equation}\label{mean value zero}
  \int_{\mathbb{S}^{n-1}}\Omega(x')d\sigma(x')=0.
\end{equation}
Let $b$ be a locally integrable function on $\bbR^n$, and let $T$ be a linear operator.
The commutator $[b,T]$ is defined by
  \be
  [b,T]f(x):=b(x)T(f)(x)-T(bf)(x)
  \ee
 for suitable functions $f$. 
 The iterated commutator $T_b^m$ with $m\geq 2$ is defined by
  \ben\label{e-itera com}
  T_b^m(f):=[b,T_b^{m-1}]f,
  \een
  where we also write $T_b^1f:=[b,T]f.$
In 1976, Coifman, Rochberg and Weiss \cite{CoifmanRochbergWeiss76AnnMath} proved that if a function $b\in {\rm BMO}(\mathbb R^n)$, then the
commutator $[b,T_{\Omega,\,0}]$  is bounded on $L^p(\mathbb R^n)$ for any $p\in(1, \infty)$; via a spherical harmonics expansion argument,
they also proved that, if $[b, R_j]$ is bounded on $L^p(\mathbb R^n)$ for every Riesz transform $R_ j$, $j=1,2,\cdots, n$, then $b\in {\rm BMO}(\mathbb R^n)$.
In 1978, using a Fourier expansion technique, Janson \cite{Janson78ArkMat} first proved that for $0<\al<1$, $b\in {Lip}_\al(\mathbb{R}^n)$ if and only if $[b,T_{\Omega,\,0}]$ with smooth kernel $\Om$ is bounded from $L^p(\mathbb{R}^n)$ to $L^q(\mathbb{R}^n)$ for $1<p<q<\infty$ with $1/q=1/p-\al/n$.
Later on, Paluszy\'{n}ski \cite{Paluszynski95IUMJ} established the corresponding result for the commutator of Riesz potential $[b,I_\beta]$.
In 2008, Hu-Gu \cite{HuGu08jmaa} provided the equivalent characterizations between $[b,T_{\Omega,\,\b}]$ with smooth kernel $\Om$
and weighted Lipschitz spaces.
Recently,
the necessity of boundedness of iterated commutators $(T_{\Omega,0})^m_b$ was proved for a rather wide class of operators,
by Lerner-Ombrosi-Rivera-R\'ios \cite{LernerOmbrosiRivera17arxiv},
by a technique in terms of the local mean oscillation. 

The study of equivalent characterization on $L^p(\rn)$-compactness of commutators $[b, T]$
of singular integral operators $T$ was initiated by Uchiyama in his remarkable work \cite{Uchiyama78TohokuMathJ}, in which he showed that the commutator $[b,T_{\Omega,0}]$ is bounded (compact resp.) on $L^p(\rn)$ if and only if
the symbol $b$ is in $\bmo$ ($\cmo$ resp.). Here and in what follows, $\cmo$ is the closure of $C_c^{\infty}(\mathbb{R}^n)$ in the ${\rm BMO}(\mathbb{R}^n)$ topology.
Moreover, Uchiyama \cite{Uchiyama78TohokuMathJ} also established the following equivalent
characterization of $\cmo$ in terms of mean value oscillation of functions, which plays a
key role in the compactness characterization of $[b,T_{\Omega,0}]$ on $L^p(\rn)$.

{\bf Theorem A}\,{\label{lemma, old characterization of CMO}
  Let $f\in \bmo$. Then $f\in \cmo$ if and only if the following three conditions hold:
  \bn
  \item $\lim\limits_{r\rightarrow 0}\sup\limits_{|Q|=r}{\mathcal{O}}(f;Q)=0$,
  \item $\lim\limits_{r\rightarrow \infty}\sup\limits_{|Q|=r}{\mathcal{O}}(f;Q)=0$,
  \item $\lim\limits_{d\rightarrow \infty}\sup\limits_{Q\cap [-d,d]^n=\emptyset}\mathcal O(f;Q)=0$.
  \en
  Here and in what follows, the symbol $Q$ means closed cubes in $\mathbb R^n$ with sides parallel to the coordinate axes,
  \be
f_Q:=\frac{1}{|Q|}\int_{Q}f(y)dy\,\,\mbox{and} \ \ \
\,\,
\mathcal O(f;Q):=\frac{1}{|Q|}\int_Q |f(x)-f_Q|dx.
\ee
}

In \cite{Wang87ChineseAnnMathSerA}, by applying the characterization of $\cmo$ in \cite{Uchiyama78TohokuMathJ},
Wang showed that the fact $b\in\cmo$ is also sufficient and necessary for the
compactness of the commutator  $[b, I_\beta]$ with fractional integral operator $I_\beta$ from $L^p(\rn)$ to
$L^q(\rn)$, where $\beta\in(0, n)$, $p,q\in(1, \fz)$ with $\frac1p=\frac1q+\frac\beta n$ and
\begin{equation*}
I_\beta f(x):=\int_\rn\frac{f(y)}{|x-y|^{n-\beta}}\,dy.
\end{equation*}
Since then, the work on compactness of commutators of singular and fractional integral operators and its applications to PDE's
have been paid more and more attention; see, for example, \cite{Iwaniec92,KrantzLi01JMAAb,ChenDingWang09PA,Taylor11,ClopCruz13AASFM,ChaffeeTorres15PA,ChenHu15CMB}
and the references therein.
We only mention that very recently, inspired by the method developed in \cite{LernerOmbrosiRivera17arxiv},
an equivalent characterization of the weighted Lebesgue space compactness for
iterated commutator $(T_{\Om,\b})^m_b$ via $\cmo$ was  obtained in
\cite{GuoWuYang17Arxiv}, where in the necessity part (see \cite[Theorem 1.4]{{GuoWuYang17Arxiv}})
the author only assume that $\Omega\in L^\infty(\mathbb S^{n-1})$ and does not change sign and is not equivalent to zero on
some open subset of $\mathbb S^{n-1}$.

Let $\al\in (0,1]$ and $\lipa$ be the Banach space of all continuous functions on $\rn$ such that $\|f\|_{\lipa}<\infty$, where
 for a continuous function $f$ on $\bbR^n$, the (homogeneous)
$\al$-order Lipschitz norm is defined by
  \be
  \|f\|_{\lipa}:=\sup_{x\neq y}\frac{|f(x)-f(y)|}{|x-y|^{\al}}.
  \ee
One of the main purposes of this article is to consider the compactness characterization of
iterated commutators $(T_{\Om,\b})^m_b$ on weighted Lebesgue spaces, where $b\in\lipa$ and $\Omega\in L^r(\mathbb S^{n-1})$
with  $r\in(1, \infty]$. To this end, we first recall the following characterization of $\lipa$ by Meyers \cite{meyers1964pams}.


\begin{definition}\label{d-bmoa}
Let $\al\in [0,1]$. The space of functions with bounded fractional mean oscillation, denoted by $\bmoa$, consists of all
$f\in L_{loc}^1(\mathbb{R}^n)$ such that
\be
\|f\|_{\bmoa}:=\sup_{Q\subset \mathbb{R}^n}\coa(f;Q)<\infty,
\ee
where
\be
\coa(f;Q):=\frac{1}{|Q|^{1+\frac{\al}{n}}}\int_Q |f(x)-f_Q|dx.
\ee
\end{definition}
In \cite{meyers1964pams}, Meyers established the following equivalent characterization of $\lipa$  in terms of
$\bmoa$.
\begin{lemma}\label{l-lip norm equiv}
  Let $\al\in (0,1]$. Then
  \be
  \lipa= \bmoa.
  \ee
  Moreover, if $f\in \lipa$, $p\in [1,\infty]$, we have
  \be
  \|f\|_{\lipa}\sim \sup_{Q}\coa(f,Q)\sim \sup_Q\frac{1}{|Q|^{\frac{\al}{n}}}\left(\frac{1}{|Q|}\int_{Q}|f(y)-f_Q|^pdy\right)^{1/p},
  \ee
  where and in what follows, the symbol $f\lesssim g$ represents that $f\leq Cg$ for some
positive constant $C$ and $f\sim g$ represents $f\lesssim g$ and $g\lesssim f$.
\end{lemma}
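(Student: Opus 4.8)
The plan is to establish the set equality $\lipa=\bmoa$ by proving the two inclusions separately, and then to read off the stated norm equivalences. The inclusion $\lipa\subset\bmoa$ (with $\|f\|_{\bmoa}\lesssim\|f\|_{\lipa}$), together with the $L^p$-estimate $\sup_Q|Q|^{-\al/n}(\frac1{|Q|}\int_Q|f-f_Q|^p)^{1/p}\lesssim\|f\|_{\lipa}$, is the routine direction: for a cube $Q$ of side length $\ell(Q)$ and any $x\in Q$, averaging $|f(x)-f(y)|\le\|f\|_{\lipa}|x-y|^\al\le\|f\|_{\lipa}(\sqrt n\,\ell(Q))^\al$ over $y\in Q$ yields the pointwise bound $|f(x)-f_Q|\le n^{\al/2}\|f\|_{\lipa}|Q|^{\al/n}$, from which both estimates follow at once for every $p\in[1,\infty]$ (with the usual interpretation of the $L^\infty$ average). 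Jensen's inequality further gives $\coa(f;Q)\le|Q|^{-\al/n}(\frac1{|Q|}\int_Q|f-f_Q|^p)^{1/p}$, so once the reverse inclusion is proved the three quantities in the displayed equivalence are pairwise comparable.

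The heart of the matter is $\bmoa\subset\lipa$, i.e.\ exhibiting, for $f\in\bmoa$, a continuous representative with $\|\cdot\|_{\lipa}\lesssim\|f\|_{\bmoa}$. The basic tool is a telescoping estimate: if $Q'\subset Q$ with $|Q|\le2^n|Q'|$, then
\[
|f_{Q'}-f_Q|\le\frac1{|Q'|}\int_{Q'}|f-f_Q|\le\frac1{|Q'|}\int_Q|f-f_Q|=\frac{|Q|}{|Q'|}\coa(f;Q)|Q|^{\al/n}\le2^n\|f\|_{\bmoa}|Q|^{\al/n}.
\]
For $x\in\bbR^n$ and a cube $Q_0\ni x$, choose nested cubes $Q_0\supset Q_1\supset Q_2\supset\cdots$ with $x\in Q_k$ and $\ell(Q_{k+1})=\ell(Q_k)/2$; since $\al>0$, the estimate above gives $\sum_{k\ge0}|f_{Q_k}-f_{Q_{k+1}}|\le2^n\|f\|_{\bmoa}\ell(Q_0)^\al\sum_{k\ge0}2^{-k\al}\lesssim\|f\|_{\bmoa}\ell(Q_0)^\al<\infty$, so $\{f_{Q_k}\}$ converges. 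A routine comparison---connecting any two small cubes containing $x$ through a common slightly larger cube and telescoping a bounded number of times---shows the limit is independent of the chosen chain; call it $g(x)$, and note $|f_Q-g(x)|\lesssim\|f\|_{\bmoa}\ell(Q)^\al$ for every cube $Q\ni x$. At every Lebesgue point of $f$ one has $f_{Q_k}\to f(x)$, hence $g=f$ a.e.

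To see $g\in\lipa$ with $\|g\|_{\lipa}\lesssim\|f\|_{\bmoa}$, fix $x\neq y$, let $Q$ be a cube containing both with $\ell(Q)\sim|x-y|$, and apply the previous bound: $|g(x)-f_Q|+|g(y)-f_Q|\lesssim\|f\|_{\bmoa}\ell(Q)^\al\sim\|f\|_{\bmoa}|x-y|^\al$, and the triangle inequality finishes it. Combining this with the routine direction and the Jensen inequality from the first paragraph yields $\|f\|_{\lipa}\sim\sup_Q\coa(f;Q)=\|f\|_{\bmoa}\sim\sup_Q|Q|^{-\al/n}(\frac1{|Q|}\int_Q|f-f_Q|^p)^{1/p}$ for each $p\in[1,\infty]$, which is the claim.

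I expect the main obstacle to be the construction of a genuine continuous (indeed Lipschitz) representative: the $\bmoa$ hypothesis only controls cube averages, so one must verify carefully that the averages over shrinking cubes converge at \emph{every} point, that the limit does not depend on the chosen sequence of cubes, and that it coincides with $f$ almost everywhere. The decisive point is the summability $\sum_k\ell(Q_k)^\al<\infty$, which is exactly where the hypothesis $\al>0$ enters; everything else is elementary bookkeeping with cube side lengths and Hölder's inequality.
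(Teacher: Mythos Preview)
Your proof is correct; it is the classical Campanato--Meyers argument, carried out cleanly. Note, however, that the paper does not prove this lemma at all: it merely attributes the result to Meyers \cite{meyers1964pams} and states it for later use. So there is no ``paper's own proof'' to compare against---you have supplied a self-contained argument where the authors chose a citation.

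For the record, what you wrote is exactly the standard route: the pointwise bound $|f(x)-f_Q|\le n^{\al/2}\|f\|_{\lipa}|Q|^{\al/n}$ handles the easy inclusion and the $L^p$-oscillation bound simultaneously; the telescoping estimate over a dyadic tower of shrinking cubes, summable precisely because $\al>0$, produces a continuous representative $g$; and comparing $g(x)$ and $g(y)$ through a common cube of side $\sim|x-y|$ gives the Lipschitz bound. The chain of inequalities $\|f\|_{\bmoa}\le\text{($L^p$ oscillation)}\lesssim\|f\|_{\lipa}\lesssim\|f\|_{\bmoa}$ (Jensen, pointwise bound, telescoping) then closes the norm equivalence for every $p\in[1,\infty]$. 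The only place you are terse is the independence of the limit from the choice of shrinking cubes, but as you say this is a routine two-line comparison through a common enlargement, and it is fine to leave it at that.
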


The following class $A_p$ was introduced by Muckenhoupt \cite{Muckenhoupt72TAMS} to
study the weighted norm inequalities of Hardy-Littlewood maximal operators, and  $\apq$ was  introduced by
Muckenhoupt--Wheeden  \cite{MuckenhouptWheeden74TAMS} to
study the weighted norm inequalities of fractional integrals, respectively.

\begin{definition}\label{d-Ap weight}
For $1<p<\infty$, the Muckenhoupt class $A_p$ is the set of locally integrable weights $\om$ such that
\be
[\om]_{A_p}^{1/p}:=\sup_{Q}\left(\frac{1}{|Q|}\int_{Q}\om(x)dx\right)^{1/p}\left(\frac{1}{|Q|}\int_{Q}\om(x)^{1-p'}dx\right)^{1/p'}<\infty,
\ee
where $\frac1p+\frac1{p'}=1$. For $1<p,q<\infty$, $1/q=1/p-\al/n$ with $0<\al<n$,
a weight function $\om$ is called an $A_{p,\,q}$ weight if
$$[\om]_{A_{p,\,q}}^{1/q}:=\sup_Q\left(\frac1{|Q|}\int_Q \om^q(x)dx\right)^{1/q}\left(\frac1{|Q|}\int_Q \om^{-p'}(x)dx\right)^{1/p'}<\fz.$$
\end{definition}

Now, we are in the position to state our first main result.

\begin{theorem}\label{theorem, necessity of boundedness}
  Let $1<p,q<\infty$, $0< \al\leq 1$, $0\leq \b<n$, $\al+\b<n$, $1/q=1/p-(m\al+\b)/n$, $m\in \mathbb{Z}^+$ and $\om\in A_{p,q}$.
  Let $\Om$ be a measurable function on $\bbS^{n-1}$, which does not change sign and is not equivalent to zero
  on some open subset of  $\bbS^{n-1}$.
  If there is $C>0$ such that for every bounded measurable set $E\subset \bbR^n$,
\begin{align}\label{e-Lpq bdd restri}
  \|(T_{\Om,\b})_b^m(\chi_E)\|_{L^q(\om^q)}\leq C(\om^p(E))^{1/p},
\end{align}
then $b\in \bmoa$.
\end{theorem}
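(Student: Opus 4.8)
The plan is to extract, from the restricted boundedness hypothesis \eqref{e-Lpq bdd restri}, a lower bound on the oscillation $\coa(b;Q)$ for an arbitrary cube $Q$, and then invoke Lemma~\ref{l-lip norm equiv} to conclude $b\in\bmoa=\lipa$. The mechanism is a localization-and-testing argument in the spirit of Coifman--Rochberg--Weiss and Janson, but carried out with the iterated commutator and a rough kernel. First I would use the hypothesis on $\Om$: since $\Om$ does not change sign and is not equivalent to zero on an open subset of $\bbS^{n-1}$, there is an open cone $\Gamma$ and a constant $c_0>0$ such that $|\Om(x')|\ge c_0$ and $\Om$ has constant sign for $x'$ in $\Gamma$. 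Fixing a cube $Q$ with center $x_Q$ and side length $\ell(Q)$, I would choose, along the direction of $\Gamma$, a ``far'' cube $\widetilde Q$ comparable in size to $Q$ but translated to distance $\sim\ell(Q)$ from $Q$ so that $x-y$ lies in the cone $\Gamma$ for all $x\in\widetilde Q$, $y\in Q$; then for such $x,y$ one has the pointwise lower bound $|x-y|^{\b-n}\Om(x-y)\sim \ell(Q)^{\b-n}$ with a fixed sign.

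Next I would test \eqref{e-Lpq bdd restri} against $E\subset Q$ suitably chosen. The key identity is that, for $x\in\widetilde Q$,
\be
(T_{\Om,\b})_b^m(\chi_E)(x)=\int_E \bigl(b(x)-b(y)\bigr)^m\,\frac{\Om(x-y)}{|x-y|^{n-\b}}\,\chi_E(y)\,dy,
\ee
so on $\widetilde Q$ the kernel is essentially constant and of one sign, and the commutator reduces to $\sim \ell(Q)^{\b-n}\int_E (b(x)-b(y))^m dy$. Taking $E=\{y\in Q: b(y)\le b_Q\}$ (or its complement), a standard convexity/splitting argument — using that $b(x)-b(y)$ has, on average, size comparable to $\coa(b;Q)$ when $y$ ranges over one of these half-sets and $x$ over $\widetilde Q$ — yields
\be
\bigl|(T_{\Om,\b})_b^m(\chi_E)(x)\bigr|\gtrsim \ell(Q)^{\b-n}\,|Q|\,\bigl(\coa(b;Q)\,\ell(Q)^{\al}\bigr)^{m}\qquad(x\in\widetilde Q),
\ee
after absorbing the $\ell(Q)^{m\al}$ normalization coming from the definition of $\coa$. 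Integrating the $q$-th power over $\widetilde Q$ against $\om^q$, using $|\widetilde Q|\sim|Q|$ and comparing $\om^q(\widetilde Q)$ with $\om^p(E)^{q/p}$ via the $A_{p,q}$ condition (this is where the weight enters: $A_{p,q}$ gives $\om^q(\widetilde Q)^{1/q}\om^{-p'}(Q)^{1/p'}\lesssim|Q|$, and a doubling/comparability estimate relates averages over $Q$ and $\widetilde Q$), and finally matching powers of $\ell(Q)$ using the scaling relation $1/q=1/p-(m\al+\b)/n$, all the dimensional factors cancel and one is left with
\be
\coa(b;Q)^m\lesssim C^m
\ee
uniformly in $Q$; taking $m$-th roots and supremizing over $Q$ gives $\|b\|_{\bmoa}<\infty$, hence $b\in\lipa$ by Lemma~\ref{l-lip norm equiv}.

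The main obstacle, and the step I would spend the most care on, is the lower bound on $\int_E(b(x)-b(y))^m\,dy$ for $x\in\widetilde Q$. For $m=1$ this is immediate from the choice $E=\{b\le b_Q\}$, but for $m\ge 2$ the integrand $(b(x)-b(y))^m$ need not have constant sign as $x$ varies over $\widetilde Q$ unless one controls the relative position of the values $b(x)$ and $b_Q$. The standard fix is to enlarge the configuration: choose $\widetilde Q$ far enough (still at distance $\sim\ell(Q)$) and, if necessary, replace $b$ by $b-\lambda$ for a well-chosen constant $\lambda$, or split $\widetilde Q$ into the part where $b>b_Q$ and the part where $b<b_Q$ and argue on whichever has measure $\ge|\widetilde Q|/2$, so that $(b(x)-b(y))^m$ keeps a fixed sign and size $\gtrsim(\coa(b;Q)\ell(Q)^\al)^m$ on a substantial subset. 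A secondary technical point is that $\Om\in L^r(\bbS^{n-1})$ is merely assumed integrable (not bounded) on the cone, so the pointwise lower bound $|\Om|\ge c_0$ on an \emph{open} subset must be replaced by a positive-measure lower bound; one then chooses the cone direction so that $x-y$ ranges over a set of directions of positive measure on which $|\Om|$ is bounded below and of one sign — such a set exists precisely by the hypothesis that $\Om$ does not change sign and is not a.e.\ zero on an open subset of $\bbS^{n-1}$. Apart from these points, the argument is a routine, if lengthy, bookkeeping of scaling exponents and $A_{p,q}$ estimates.
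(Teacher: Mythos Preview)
Your outline is correct and is essentially the paper's argument: choose a companion cube $P$ at distance $\sim l_Q$ along a cone where $\Om$ has a fixed sign, test the restricted bound on characteristic functions of level sets of $b$ so that $(b(x)-b(y))^m$ keeps a sign, and close with $A_{p,q}$ bookkeeping and the scaling $1/q=1/p-(m\al+\b)/n$. Two technical points deserve sharpening to match the paper. First, the paper does the sign/size step via the \emph{median} $m_b(P)$ of $b$ on the far cube rather than the mean $b_Q$: setting $E_i\subset Q$ and $F_i\subset P$ according to $b\gtrless m_b(P)$ gives the clean pointwise domination $|b(x)-b(y)|\ge|b(x)-m_b(P)|$ on $E_i\times F_i$ with $E_1\cup E_2=Q$ and $|F_i|=|Q|/2$ (Proposition~\ref{proposition, key lower estimates}); your mean-based splitting also works, but you must test with \emph{both} $\chi_E$ and $\chi_{Q\setminus E}$ and pick whichever half of the far cube has measure $\ge|Q|/2$. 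Second, your treatment of the rough $\Om$ is too loose: a ``positive-measure'' lower bound on $|\Om|$ is not enough, since you need that for \emph{each} $x$ the set of $y$ with $|\Om(x-y)|<\ep_0$ is small. The paper's fix is an approximate-continuity argument producing, for any $\g>0$, a far cube $P$ and $\ep_0>0$ such that $|\{y\in P:|\Om(x-y)|<\ep_0\}|\le\g|Q|$ uniformly in $x\in Q$; this is what makes the kernel lower bound effective after removing a null-like set from $F_i$.
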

\begin{remark}
  One can see that the method of proof of Theorem \ref{theorem, necessity of boundedness} is also valid for the case $\al=0$,
and for the weighted cases (see \cite{HuGu08jmaa,LernerOmbrosiRivera17arxiv}).
In particular, this method implies another approach of \cite[Theorem 1.1 (ii)]{LernerOmbrosiRivera17arxiv}.
\end{remark}

Denote by $\widetilde{CMO}_{\al}(\bbR^n)$ the $C_c^{\infty}(\mathbb{R}^n)$ closure in $\bmoa$.
Two natural questions arise:
In Theorem A, is it true
if we replace $\bmo$ by $\bmoa$,  $\mathcal O$ by $\coa$,
and $\cmo$ by the $\widetilde{CMO}_{\al}(\bbR^n)$? And what is the relation between
$\widetilde{CMO}_{\al}(\bbR^n)$ and the compactness of commutators?
We consider the following example for a first glimpse.

\textbf{An example.}
Let $\al\in (0,1]$.
Take
\be
\va_1(t):=sgn(t)|t|^{\al},\ \  t\in \bbR,\ \  \va(x):=\va(x_1), x\in \rn.
\ee
We first claim that $\va_1\in Lip_{\al}(\bbR)$.
Indeed, since $\al\in (0,1]$, for $t,s\in \bbR$,
\be
|t+s|^{\al}\leq |t|^{\al}+|s|^{\al},\ \ \ |t|^{\al}\leq |t+s|^{\al}+|s|^{\al},
\ee
which implies that
\be
\big||t+s|^{\al}-|t|^{\al}\big|\leq |s|^{\al}.
\ee
Thus, the function $|\cdot|^{\al}$ belongs to $Lip_{\al}(\bbR)$.
To prove that $\va_1\in Lip_{\al}(\bbR)$, we only need to verify
\be
\frac{|\va_1(t+s)-\va_1(t)|}{|s|^{\al}}\leq C.
\ee
Obviously, the above inequality is valid for $t(t+s)\geq 0$ by $|\cdot|^{\al} \in Lip_{\al}(\bbR)$.
If $t(t+s)< 0$, we have $|s|\geq |t|$. So
\be
\frac{|\va_1(t+s)-\va_1(t)|}{|s|^{\al}}\leq \frac{|\va_1(t+s)|+|\va_1(t)|}{|s|^{\al}}
=\frac{|t+s|^{\al}+|t|^{\al}}{|s|^{\al}}\leq \frac{(2^{\al}+1)|s|^{\al}}{|s|^{\al}}=2^{\al}+1.
\ee
For every $x,z\in \rn$,
\be
|\va(x+z)-\va(x)|=|\va_1(x_1+z_1)-\va_1(x_1)|\leq \|\va_1\|_{Lip_{\al}(\bbR)}|z_1|^{\al}
\leq \|\va_1\|_{Lip_{\al}(\bbR)}|z|^{\al}.
\ee
Next, we will see that the condition in (1) of Theorem A fails.
By a direct calculation, for $Q_0:=[-1/2,1/2]^n$ and any $a>0$,
we have
\be
\int_{aQ_0}\va(x)dx=a^{n-1}\int_{-a/2}^{a/2}sgn(x_1)|x_1|^{\al}dx_1=0,
\ee
and
\be
\int_{aQ_0}|\va(y)|dy=a^{n-1}\int_{-a/2}^{a/2}|x_1|^{\al}dx_1\sim a^{n+\al}.
\ee
Thus,
\be
\coa(\va,aQ_0)=\frac{1}{|aQ_0|^{1+\frac{\al}{n}}}\int_{aQ_0}|\va(y)-\va_{aQ_0}|dy
=\frac{1}{|aQ_0|^{1+\frac{\al}{n}}}\int_{aQ_0}|\va(y)|dy\sim 1.
\ee
From the above example, we have two observations:
\bn
\item For $\al\in (0,1)$, since a $\widetilde{CMO}_{\al}(\bbR^n)$ function must satisfy (1) in
Theorem A with $\coa$,
we see that $\va\in \lipa\bs \widetilde{CMO}_{\al}(\bbR^n)$. Hence,
$\tcmoa$ is a non-trivial and proper subspace of $\lipa$.
\item For $\al=1$, since the function $\va\chi_{B(0,1)}$ can be a part of certain $C_c^{\infty}(\mathbb{R}^n)$ function in $B(0,1)$,
we see that there exists a $C_c^{\infty}(\mathbb{R}^n)$ function
(belongs to $\widetilde{CMO}_{\al}(\bbR^n)$) that does not satisfy (1) in
Theorem A with $\co_1$. Hence, Lemma \ref{l-lip norm equiv} fails if we
replace $\bmo$ by $Lip_1(\bbR^n)$,  $\mathcal O$ by $\co_1$,
and $\cmo$ by the $\widetilde{CMO}_{1}(\bbR^n)$.
\en
Now, we introduce another function space, $\cmoa$, associated with $\bmoa$.
One will see that when $\al=1$, the following $\cmoa$ is the right function space for the
equivalent characterization of compact commutators.
We will use $\cmoa$ to give an answer of the second question posed above,
see Theorem \ref{theorem, characterization of compactness} below.

\begin{definition}\label{def, cmoa}
  Let $\al\in [0,1]$.
  A $\bmoa$ function $f$ belongs to $\cmoa$ if
  it satisfies the following three conditions:
  \bn
  \item $\lim\limits_{r\rightarrow 0}\sup\limits_{|Q|=r}{\mathcal{O}_{\al}}(f;Q)=0$,
  \item $\lim\limits_{r\rightarrow \infty}\sup\limits_{|Q|=r}{\mathcal{O}_{\al}}(f;Q)=0$,
  \item $\lim\limits_{d\rightarrow \infty}\sup\limits_{Q\cap [-d,d]^n}\coa(f;Q)=0$.
  \en

\end{definition}
Observe that $CMO_0(\rn)=\cmo$ by Theorem A.
In the following, we give our second main result corresponding to Theorem A.
As in the $\cmo$ case, this theorem is also a key tool for the equivalent characterization of compact commutators.
\begin{theorem}\label{theorem, characterization of cmoa}
  When $\alpha\in [0,1)$,
  we have
  \be
  \tcmoa = \cmoa.
  \ee
  When $\al=1$, $\cmoa\subsetneqq \tcmoa$.
  In fact, $\rm CMO_{1}(\rn)$ is equal to the constant space $\mathbb{C}$ containing all complex numbers with usual norm.
\end{theorem}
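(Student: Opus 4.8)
The plan is to treat the ranges $\al=1$ and $\al\in[0,1)$ separately, using throughout that $\bmoa=\lipa$ (Lemma~\ref{l-lip norm equiv}), so that every $\bmoa$ function is continuous.

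For $\al=1$ I would argue as follows. The inclusion $\bbC\subseteq\tcmoa$ is immediate, since $0\in C_c^\fz(\bbR^n)$ and constants have zero $\bmoa$-seminorm, while $\tcmoa$ also contains non-constant bump functions; hence $\tcmoa\supsetneq\bbC$. For $\cmoa\subseteq\bbC$, let $f\in\cmoa\subseteq\lipa$; by Rademacher's theorem $f$ is differentiable a.e. If $x_0$ is a point of differentiability and $Q=x_0+[-\ell/2,\ell/2]^n$, then $f(y)-f_Q=\nabla f(x_0)\cdot(y-x_0)+o(\ell)$ uniformly on $Q$ as $\ell\to0$, so $\co_1(f;Q)=\ell^{-1}\co(f;Q)$ satisfies $\liminf_{\ell\to0}\co_1(f;Q)\ge c_n|\nabla f(x_0)|$ for a dimensional constant $c_n>0$. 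Condition~(1) of Definition~\ref{def, cmoa} forces $\nabla f(x_0)=0$; as $x_0$ ranges over a set of full measure and $f$ is Lipschitz, $f$ is constant. Thus $\cmoa=\bbC\subsetneq\tcmoa$.

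For $\al\in[0,1)$: when $\al=0$ the assertion $\tcmoa=\cmoa=\cmo$ is exactly Theorem~A, so assume $\al\in(0,1)$. The inclusion $\tcmoa\subseteq\cmoa$ is routine: every $\va\in C_c^\fz(\bbR^n)$ obeys the three conditions of Definition~\ref{def, cmoa} --- for (1), $\coa(\va;Q)\ls\|\nabla\va\|_\fz\ell(Q)^{1-\al}\to0$ (this is where $\al<1$ is used); for (2), $\coa(\va;Q)\le2\|\va\|_{L^1}|Q|^{-1-\al/n}\to0$; for (3), $\va$ vanishes on cubes far from the origin --- and $\cmoa$ is closed in $\bmoa$ since $|\coa(f;Q)-\coa(g;Q)|\le\coa(f-g;Q)\le\|f-g\|_{\bmoa}$ uniformly in $Q$, so the three limits are preserved under $\bmoa$-limits; hence $\tcmoa=\overline{C_c^\fz(\bbR^n)}\subseteq\cmoa$. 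For the reverse inclusion, given $f\in\cmoa$ and $\varepsilon>0$ I would produce $g\in C_c^\fz(\bbR^n)$ with $\|f-g\|_{\bmoa}<\varepsilon$ in two steps: (i) \emph{truncation} --- with $\psi\in C_c^\fz(\bbR^n)$, $\psi\equiv1$ on $B(0,1)$, $\mathrm{supp}\,\psi\subseteq B(0,2)$ and $\psi_R(\cdot)=\psi(\cdot/R)$, show $\|f-f\psi_R\|_{\bmoa}\to0$ as $R\to\fz$; and (ii) \emph{mollification} --- for fixed $R$, $f\psi_R\in\lipa$ is compactly supported, and using the pointwise estimate $|(f\psi_R)*\eta_\delta-f\psi_R|\le\|f\psi_R\|_{\lipa}\delta^\al$ (whence $\coa((f\psi_R)*\eta_\delta-f\psi_R;Q)\ls\|f\psi_R\|_{\lipa}\delta^\al\ell(Q)^{-\al}$ on cubes of side bounded below) together with condition~(1) for $f\psi_R$ on small cubes, show $\|f\psi_R-(f\psi_R)*\eta_\delta\|_{\bmoa}\to0$ as $\delta\to0$; since $(f\psi_R)*\eta_\delta\in C_c^\fz(\bbR^n)$, choosing $R$ large and then $\delta$ small finishes.

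The crux is Step~(i). Setting $h_R:=f(\psi_R-1)$, which vanishes on $B(0,R)$ and equals $-f$ off $B(0,2R)$, one estimates $\coa(h_R;Q)$ according to the size and location of $Q$: cubes inside $B(0,R)$ give $0$; cubes far from the origin, or of large side length, are controlled (with an $o(1)$ error) by conditions~(3) and (2) respectively. The delicate family is that of cubes straddling the annulus $R\le|x|\le2R$; for these the splitting $h_R=(f-f_Q)(\psi_R-1)+f_Q(\psi_R-1)$ gives $\coa(h_R;Q)\ls\coa(f;Q)+|f_Q|\,\coa(\psi_R-1;Q)$, where the first term is small by (2)--(3). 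To handle the second I would first prove the growth bound $|f_Q-f(0)|=o\big((\ell(Q)+\mathrm{dist}(Q,0))^\al\big)$ valid for every $f\in\cmoa$ --- obtained by chaining concentric cubes and summing the geometric series $\sum_j2^{j\al}\coa(f;Q_j)$ with the aid of conditions~(2) and (3) --- and then combine it with the bound $\coa(\psi_R-1;Q)\ls R^{-\al}$, which holds precisely on the straddling family, so that the product is $o(1)$ as $R\to\fz$. This growth bound, which is \emph{false} when $\al=1$ --- where instead, as shown above, $\cmoa$ degenerates to the constants --- is what I expect to be the main technical obstacle, and it is exactly what demarcates the two regimes.
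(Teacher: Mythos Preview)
Your proposal is correct and, for the hard inclusion $\cmoa\subseteq\tcmoa$ when $\al\in(0,1)$, takes a genuinely different route from the paper. The paper does not truncate $f$ by a smooth cutoff. Instead it devotes an entire subsection to building piecewise-multilinear ``tent'' functions $\cfq$ on cubes (Proposition~\ref{proposition, Lip on Q}) and then patches these over a Whitney-type family $\cq$ of dyadic cubes---with sizes dictated by the $\ep$-thresholds coming from conditions (1)--(3)---to produce a bounded $Lip_1$ approximant $g_\ep$; showing $\|f-g_\ep\|_{\bmoa}\ls\ep$ requires a substantial case analysis on the position and size of a test cube relative to $\cq$, and only then is $g_\ep$ mollified. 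Your approach replaces all of this machinery by the single growth lemma $|f_Q-f(0)|=o\big((\ell(Q)+|c_Q|)^\al\big)$, proved by chaining cubes and splitting the resulting geometric series at the thresholds supplied by conditions~(2) and~(3); this is precisely what is needed to kill the dangerous product $|f_Q|\,\coa(\psi_R;Q)$ on cubes meeting the annulus, and it makes the truncation step go through. The mollification step then succeeds because condition~(1) for $f\psi_R$ (inherited from $f$ together with $\al<1$) transfers to $(f\psi_R)*\eta_\d$ via $\coa(g*\eta_\d;Q)\le\sup_{|z|\le\d}\coa(g;Q-z)$. Your argument is shorter and more in the spirit of standard density proofs; the paper's construction is more explicit and is modelled on Uchiyama's original $\cmo$ proof.

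For $\al=1$ your use of Rademacher's theorem is also correct; the paper instead applies Proposition~\ref{proposition, regularity} directly to obtain $|f(x)-f(y)|\ls\ep|x-y|$ for nearby points, which avoids differentiability considerations and shows the derivative vanishes everywhere rather than almost everywhere.
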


Based on Theorems \ref{theorem, necessity of boundedness} and \ref{theorem, characterization of cmoa}, we further have the following
result on compactness characterization of iterated commutator $(T_{\Om,\b})_b^m$.

\begin{theorem}\label{theorem, characterization of compactness}
  Let $1<p,q<\infty$, $0< \al\leq 1$, $0\leq \b<n$, $m\al+\b<n$, $1/q=1/p-(m\al+\b)/n$, $m\in \mathbb{Z}^+$.
Suppose $r'\in [1,p)$, $\om^{r'}\in A_{\frac{p}{r'},\frac{q}{r'}}$.
  Let $\Om\in L^r(\bbS^{n-1})$, which does not change sign and is not equivalent to zero
  on some open subset of  $\bbS^{n-1}$.
  The following two statements is equivalent:
  \bn
  \item $(T_{\Om,\b})_b^m$ is a compact operator from $L^p(\om^p)$ to $L^q(\om^q)$;
  \item $b\in \cmoa$.
  \en
\end{theorem}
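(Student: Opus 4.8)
Here is how I would attack Theorem~\ref{theorem, characterization of compactness}. I would prove $(1)\Leftrightarrow(2)$ as two separate implications. For $(2)\Rightarrow(1)$ the starting point is boundedness: since $\cmoa\subset\bmoa=\lipa$ by Lemma~\ref{l-lip norm equiv}, and since the hypothesis $\om^{r'}\in A_{p/r',q/r'}$ in particular forces $\om\in A_{p,q}$, the known weighted $\lipa$-boundedness of (multi-symbol) commutators of $T_{\Om,\b}$ gives that $(T_{\Om,\b})_b^m\colon L^p(\om^p)\to L^q(\om^q)$ is bounded for every $b\in\lipa$, and a telescoping argument in the symbols yields
$\|(T_{\Om,\b})_b^m-(T_{\Om,\b})_{b'}^m\|_{L^p(\om^p)\to L^q(\om^q)}\lesssim\|b-b'\|_{\lipa}\bigl(\|b\|_{\lipa}+\|b'\|_{\lipa}\bigr)^{m-1}$. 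If $\al=1$, Theorem~\ref{theorem, characterization of cmoa} says $\cmoa=\bbC$, so $(T_{\Om,\b})_b^m=0$ and there is nothing to prove. If $\al\in(0,1)$, Theorem~\ref{theorem, characterization of cmoa} gives $b_j\in C_c^\infty(\rn)$ with $b_j\to b$ in $\bmoa$, hence $(T_{\Om,\b})_{b_j}^m\to(T_{\Om,\b})_b^m$ in operator norm, and since a norm limit of compact operators is compact it suffices to prove compactness for $b\in C_c^\infty(\rn)$.

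For $b\in C_c^\infty(\rn)$ I would approximate the kernel of $T_{\Om,\b}$ by inserting smooth cutoffs at scale $\eta$ near the diagonal and at scale $N$ near infinity and then mollifying at scale $\delta$; using $\Om\in L^r(\bbS^{n-1})$ one checks that the error $T_{\Om,\b}-T^{\eta,N,\delta}$ is small in $L^p(\om^p)\to L^q(\om^q)$ norm (uniformly, as $\eta,\delta\to0$ and $N\to\infty$), and by the symbol estimate above the difference $(T_{\Om,\b})_b^m-(T^{\eta,N,\delta})_b^m$ is then small in operator norm. The operator $(T^{\eta,N,\delta})_b^m$ has integral kernel $\prod_{i=1}^m\bigl(b(x)-b(y)\bigr)\,K^{\eta,N,\delta}(x-y)$, which is smooth and compactly supported on $\rn\times\rn$ (the factor $K^{\eta,N,\delta}$ restricts $|x-y|$, and $b$ has compact support); such an operator maps bounded subsets of $L^p(\om^p)$ into subsets of $L^q(\om^q)$ that are uniformly tight, uniformly small at infinity and equicontinuous under translations, so it is compact by the Fr\'echet--Kolmogorov criterion (the weight causes no difficulty because of the compact support). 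Letting $\eta,\delta\to0$, $N\to\infty$ then gives compactness of $(T_{\Om,\b})_b^m$, completing $(2)\Rightarrow(1)$.

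For $(1)\Rightarrow(2)$: a compact operator is bounded, so \eqref{e-Lpq bdd restri} holds and Theorem~\ref{theorem, necessity of boundedness} yields $b\in\bmoa$. It remains to verify conditions (1)--(3) of Definition~\ref{def, cmoa}. Suppose some condition fails; say (1) fails (the cases (2) and (3) being analogous, with $|Q_k|\to\infty$, resp.\ with $Q_k$ escaping to infinity). Then there are $\varepsilon_0>0$ and cubes $Q_k$ with $|Q_k|\to0$ and $\coa(b;Q_k)\ge\varepsilon_0$. Adapting Uchiyama's scheme to the homogeneous kernel, for each $k$ I would choose a ``conjugate'' cube $\widetilde Q_k$ with $\ell(\widetilde Q_k)\sim\ell(Q_k)$, positioned so that for $x\in Q_k$, $y\in\widetilde Q_k$ the vector $x-y$ lies in a fixed open cone on which $\Om$ keeps one sign and $\int|\Om|>0$, whence $\Om(x-y)\,|x-y|^{\b-n}$ has constant sign and size $\sim\ell(Q_k)^{\b-n}$ there. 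Splitting $Q_k$ according to the sign of $b-b_{Q_k}$ and taking $f_k$ an $L^p(\om^p)$-normalized function supported on $\widetilde Q_k$ adapted to that splitting --- arranged, when $m\ge2$, so that each of the remaining $m-1$ factors $b(x)-b(y)$ contributes a quantity $\gtrsim\ell(Q_k)^{\al}\cdot(\text{oscillation of }b)$ --- one obtains the lower bound $\|(T_{\Om,\b})_b^m f_k\|_{L^q(\om^q)}\gtrsim\varepsilon_0^{\,m}$. On the other hand, by construction $f_k\rightharpoonup0$ weakly in $L^p(\om^p)$ (the supports shrink to a point, or $\|f_k\|_\infty\to0$, or the supports run off to infinity, according to the case), so compactness forces $\|(T_{\Om,\b})_b^m f_k\|_{L^q(\om^q)}\to0$, a contradiction. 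Hence $b$ satisfies (1)--(3), i.e.\ $b\in\cmoa$.

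The main obstacle is the necessity direction. For each of the three failure modes one must build test functions $f_k$ that are simultaneously normalized in $L^p(\om^p)$, weakly null, and such that the \emph{iterated}, rough-kernel commutator $(T_{\Om,\b})_b^m f_k$ stays bounded below; controlling the $m$-fold product $\prod\bigl(b(x)-b(y)\bigr)$ using only $\coa(b;Q_k)\ge\varepsilon_0$ while carrying along the Muckenhoupt weight $\om$ and the merely $L^r$ angular part $\Om$ is the delicate bookkeeping. On the sufficiency side the corresponding technical heart is the kernel truncation/mollification estimate, which must be carried out in the genuine two-weight $L^p(\om^p)\to L^q(\om^q)$ setting and must be compatible with the symbol expansion so that the resulting error in the $m$-fold commutator is still small --- which is why the multi-symbol bound $\|(T_{\Om,\b})_b^m-(T_{\Om,\b})_{b'}^m\|\lesssim\|b-b'\|_{\lipa}(\|b\|_{\lipa}+\|b'\|_{\lipa})^{m-1}$ is set up at the outset.
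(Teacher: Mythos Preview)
Your plan mirrors the paper's structure: sufficiency by approximation (of $b$ and of the kernel) followed by Fr\'echet--Kolmogorov, and necessity by producing test functions $f_k$ with a uniform lower bound and contradicting compactness. The sufficiency half is essentially the argument of Section~\ref{s4}. The necessity half diverges from the paper: you argue via weak convergence ($f_k\rightharpoonup0$, so compactness forces $\|(T_{\Om,\b})_b^m f_k\|_{L^q(\om^q)}\to0$), whereas the paper proves directly that the sequence $(T_{\Om,\b})_b^m f_j$ is not Cauchy, combining a \emph{localized} lower bound on sets $E_j\setminus B$ (Proposition~\ref{proposition, lower estimates compactness}, which exploits the continuity of $b\in\lipa$) with an upper bound on dyadic annuli (Proposition~\ref{proposition, upper estimates}) and an $L^\infty$ approximation of $\Om$. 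Your route is genuinely more economical---it needs only the coarse lower bound of Proposition~\ref{proposition, lower estimates} and neither the annular decay nor the $L^\infty$ reduction---provided the weak convergence $f_k\rightharpoonup0$ is established. It can be, in each of the three failure modes, using the $A_\infty$ properties of $\om^p$ (infinite total mass, comparability on subsets of fixed proportion), but you skip this verification entirely; in case~(2) in particular it is not obvious and deserves a line.

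The real gap is your lower-bound construction. Splitting $Q_k$ by the sign of $b-b_{Q_k}$ and placing $f_k$ on a conjugate cube $\widetilde Q_k$ does \emph{not} force $(b(x)-b(y))^m$ to keep a fixed sign: for $x\in Q_k$ the values $b(y)$, $y\in\widetilde Q_k$, are completely uncontrolled, so the integrand $\int_{\widetilde Q_k}(b(x)-b(y))^m K(x-y)\,dy$ can cancel---already for $m=1$, and your ``arranged so that each remaining factor contributes $\gtrsim \ell(Q_k)^\al\cdot(\text{oscillation})$'' clause for $m\ge2$ does not repair this (no such pointwise lower bound on $|b(x)-b(y)|$ is available). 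The paper's remedy is Proposition~\ref{proposition, key lower estimates}: one splits \emph{both} $Q$ and the conjugate cube $P$ using a \emph{median value} $m_b(P)$, producing pairs $E_i\times F_i$ on which $b(x)-b(y)$ keeps one sign and $|b(x)-b(y)|\ge|b(x)-m_b(P)|$; the lower bound $\sum_{i=1,2}\|(T_{\Om,\b})_b^m f_i\|_{L^q(\om^q,Q)}\gtrsim\coa(b;Q)^m$ then follows by H\"older (Proposition~\ref{proposition, lower estimates}). Once you substitute this median device for your mean-based splitting (and pick, for each $k$, whichever $f_{i,k}$ realizes the lower bound), your weak-convergence contradiction goes through and is indeed shorter than the paper's not-Cauchy argument.
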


We remark that, while we were putting the finishing touches on this manuscript,
we learned that a similar result concerning Theorem \ref{theorem, characterization of cmoa}
have been obtained independently in \cite{NogayamaSawano17MathematicalNotes},
 where the authors characterize the compactness of the commutators generated by Lipschitz functions and fractional
integral operators on Morrey spaces.
Our new contribution of the above three theorems is the following.
\bn
\item For $m=1$, Theorem \ref{theorem, necessity of boundedness} can be applied to a
much wider class of operators, compared to the known results, even for the unweighed cases.
For $m\geq 2$, Theorems \ref{theorem, necessity of boundedness} is new even for $\Om\equiv 1$ and $\om\equiv 1$.
\item
The proof of Theorem \ref{theorem, necessity of boundedness} is also valid for the case $\al=0$,
and for the weighted cases (see \cite{HuGu08jmaa,LernerOmbrosiRivera17arxiv}).
We mention that we prove Theorem \ref{theorem, necessity of boundedness} by using some idea from \cite{LernerOmbrosiRivera17arxiv}
(see also \cite{GuoWuYang17Arxiv}).
It is known that $\bmo$ and $\cmo$ can be characterized by local mean oscillation of functions
(see \cite{Stromberg79IUMJ} and \cite{GuoWuYang17Arxiv} respectively), and
 \cite[Proposition 3.1]{LernerOmbrosiRivera17arxiv} plays a crucial role
 in the study of two weighted $L^p$-boundedness of iterated commutator  via weighted $\rm BMO$ function therein,
 which is on the domination of the local mean oscillation of a function $b$
on any given cube $Q$ by the quantity $|b(x)-b(y)|$ pointwise on a subset $G$ of $Q\times P$,
where $P$ is a cube having comparable volume to $Q$;  see also \cite[Proposition 4.1]{GuoWuYang17Arxiv}.
However, since the loss of local mean oscillation in $\lipa$ with $\al\in (0,1]$,
the method in \cite{LernerOmbrosiRivera17arxiv} can not be applied to our case.
A novelty of this article lies in that we obtain a version of such domination via the so-called median value of $b$
 on two subsets of $Q\times P$, see  Proposition \ref{proposition, key lower estimates} below.
In this sense, 
our proof can be applied to the function spaces
without local mean oscillation.
\item For $\al=1$, Theorem \ref{theorem, characterization of cmoa} is new, and for $\al\in (0,1)$, our proof
of Theorem \ref{theorem, characterization of cmoa} is totally different from \cite{NogayamaSawano17MathematicalNotes}.
\item
For $m=1$, Theorem \ref{theorem, characterization of compactness} is the first result of
equivalent characterization of compact commutators with rough kernel.
For $m\geq 2$,
Theorem \ref{theorem, characterization of compactness} is new even for $\Om\equiv 1$ and $\om\equiv 1$.
Since the loss of local mean oscillation in $\lipa$, our method is quite different from \cite{GuoWuYang17Arxiv}.
\en

The rest of the paper is organized as follows. In Section \ref{s3},
we first obtain a domination result via the median value of  a given function $b$ and cube $Q$
by the quantity $|b(x)-b(y)|$ pointwise on two subsets of $Q\times P$, see  Proposition \ref{proposition, key lower estimates} below.
By making use of Proposition \ref{proposition, key lower estimates}, for any given cube $Q$ and  real-valued measurable function $b$,
we further construct two functions $f_i (i=1,2)$ related to $Q$, and
obtain a lower bound for the sum of weighted $L^q$
norm of $(T_{\Omega,\,\al})_b^m(f_i)$ over $Q$ in terms of $\coa(b,Q)$; see Proposition \ref{proposition, lower estimates} below.
Using this lower bound, we present the proof of Theorem \ref{theorem, necessity of boundedness}.

Section \ref{s2} contains the proof of Theorem \ref{theorem, characterization of cmoa}.
In Section \ref{s2.1}, we first introduce and study a kind of  Lipschitz functions $\{\mathcal F_Q\}$ associated with
a finite family $\mathcal Q$ of dyadic cubes.
In Subsection \ref{s2.2},
for any given function $f\in \cmoa$ with $\alpha\in(0, 1)$,
we further
construct a $Lip_1(\bbR^n)$ function via the features of $\lipa$ and $\{\mathcal F_Q\}$
as an approximate function of $f$ in the topology of $\lipa$.
Using this key approximate function, we complete the proof of Theorem \ref{theorem, characterization of cmoa}.

Section \ref{s5} is devoted to the proof of
the $(1)\Longrightarrow (2)$ part of Theorem \ref{theorem, characterization of compactness}, and is divided into three subsections.
In Subsection \ref{s3.2},
we establish a further lower bounded estimate closely related the necessity of compact commutators.
Here, although the local mean oscillation is lost,
the continuity of $b\in \lipa$ provides enough information for the distribution of the values of $b$.
This observation makes it possible for us to get the lower bound
for the weighted $L^q$ norm of $(T_{\Omega,\,\al})_b^m(f)$ over certain measurable set associated with $Q$
in terms of $\min\left\{\left(\coa(b,Q)\right)^{2n/\al},1\right\}\coa(b,Q)^m$,
where $f$ is a suitable function related to $Q$.
In Subsection \ref{s3.3},
for $b\in\lipa$, $\Om\in L^{\infty}(\mathbb S^{n-1})$ and any cube $Q$,
we also obtain an upper bound of the weighted $L^q$
norm of $(T_{\Omega,\,\al})_b^m(f)$ over the annulus $2^{d+1}Q\bs 2^dQ$ in terms of $2^{-\d dn/p}d^m$, where
$d\in\mathbb N$ large enough, $\delta$ is a positive constant depending on $w\in\apq$ and $f$ is aforementioned.
Using Theorem \ref{theorem, characterization of cmoa}, the upper and further lower bounds,
and a reduction of $\Om$,
we further present the proof of $(1)\Longrightarrow (2)$ part in Theorem \ref{theorem, characterization of compactness} via a contradiction argument in Subsection \ref{s3.4}.

In Section \ref{s4}, we give the proof for the $(2)\Longrightarrow (1)$ part
in Theorem \ref{theorem, characterization of compactness}.
Using a classical boundedness result of fractional integral, we reduce the proof to the case of "good kernel".

It would be helpful to clarify that in this paper, the kernel $\Om$ is assumed to be real-valued.
For $m\geq 2$, we only consider the real-valued symbol $b$. This restriction is actually implied in all the previous results of this topic.
Here, we emphasize this to avoid possible misunderstanding.

Finally, we make some conventions on notation. Throughout the paper,  for a real number $a$, $\lfloor a\rfloor$ means the biggest integer no more than $a$.
By $C$ we denote  a {positive constant} which
is independent of the main parameters, but it may vary from line to
line. Constants with subscripts do
not change in different occurrences.  For a given cube $Q$, we use $c_Q$, $l_Q$, $\mathring{Q}$ and $\chi_Q$ to denote the center, side length, interior and  characteristic function of $Q$, respectively. Moreover, we denote $Q_0:=[-1/2,1/2]^n$.
For any point $x_0\in\rn$ and sets $E, F\subset\rn$,
$E+x_0:=\{y+x_0: y\in E\}$ and $E-F:=\{x-y: x\in E, y\in F\}$.

\section{Necessity of boundedness of commutators}\label{s3}

This section is devoted to the proof of Theorem \ref{theorem, necessity of boundedness}.
Because of the loss of local mean oscillation in $\lipa$ with $\al\in (0,1]$,
the methods used in \cite{GuoWuYang17Arxiv} and \cite{LernerOmbrosiRivera17arxiv}
can not be applied here.
By using the median value of $b$, we obtain the useful lower bound associated with $\Om$ and $b$.
Then, for a real-valued measurable function $b$,
we construct two functions $f_i (i=1,2)$ related to $Q$, and
obtain a lower bound for the sum of weighted $L^q$
norm of $(T_{\Omega,\,\al})_b^m(f_i)$ over $Q$ in terms of $\coa(b,Q)$.
Using this lower bound, Theorem \ref{theorem, necessity of boundedness} is proved.


We first recall some useful properties of $A_p$ and $\apq$ weights; see \cite{MuckenhouptWheeden74TAMS,Grafakos08,HytonenPerez13AnalPDE,HolmesRahmSpencer16StudiaMath}.
Define the $A_{\infty}$ class of weights by $A_{\infty}:=\cup_{p>1}A_p$, and recall the Fujii-Wilson $A_{\infty}$ constant
\be
[\om]_{A_{\infty}}:= \sup_Q\frac{1}{\om(Q)}\int_Q M(\chi_Q\om)\,dx,
\ee
where $M$ is the Hardy-Littlewood maximal operator.
\begin{lemma}\label{l-Ap weight prop}
Let $p\in(1, \infty)$ and $w\in A_p$.
\begin{enumerate}
  \item [{\rm(i)}] For every $0<\al<1$, there exists $0<\b<1$ such that for every $Q$ and every measurable set $E\subset Q$
with $|E|\geq \al |Q|$,
\be
\om(E)\geq \b\om(Q).
\ee
  \item [{\rm (ii)}]For all $\la>1$, and all cubes $Q$,
\be
\om(\la Q)\leq \la^{np}[\om]_{A_p}\om(Q).
\ee
  \item [{\rm (iii)}]  $[\om]_{A_{\infty}}\leq c_n[\om]_{A_p}$.

  \item[{\rm (iv)}]There exists a constant $\ep_n$ only depending on $n$, such that
 if $0<\ep\leq \ep_n/[\om]_{A_\infty}$, then $\om$ satisfies the reverse H\"{o}lder  inequality that for any cube $Q$,
\be
\left(\frac{1}{|Q|}\int_{Q}\om(x)^{1+\ep}dx\right)^{\frac{1}{1+\ep}}\leq \frac{2}{|Q|}\int_{Q}\om(x)dx.
\ee

\item[{\rm(v)}] There exists a small positive constant $\ep$ depending only on $n$, $p$ and $[\om]_{A_p}$ such that
\be
\om^{1+\ep}\in A_p,\ \ \  \om\in A_{p-\ep}.
\ee

\end{enumerate}
\end{lemma}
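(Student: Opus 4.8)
The lemma collects five standard facts about Muckenhoupt weights, so my plan is to give short self-contained arguments for the elementary items (i) and (ii) and to quote the quantitative weighted literature for (iii)--(v), indicating explicitly where each constant comes from.

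For (i) I would argue directly from the definition of $A_p$ together with H\"older's inequality. Fixing a cube $Q$ and a measurable set $E\subset Q$, write $1=\om^{1/p}\om^{-1/p}$ on $E$ and apply H\"older with exponents $p$ and $p'$ to get
\[
\frac{|E|}{|Q|}\le\Big(\frac1{|Q|}\int_E\om\,dx\Big)^{1/p}\Big(\frac1{|Q|}\int_Q\om^{1-p'}\,dx\Big)^{1/p'}.
\]
The $A_p$ condition bounds the second factor by $\big([\om]_{A_p}\,|Q|/\om(Q)\big)^{1/p}$, and rearranging gives $\om(E)\ge[\om]_{A_p}^{-1}\,(|E|/|Q|)^p\,\om(Q)$; this proves (i) with $\b=\al^p/[\om]_{A_p}$. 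For (ii) I would simply apply the same chain of inequalities to the pair $(\la Q,Q)$, i.e.\ to the cube $\la Q$ with subset $Q\subset\la Q$, and rearrange to obtain $\om(\la Q)\le[\om]_{A_p}\la^{np}\om(Q)$.

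For (iii), (iv) and (v) the plan is to invoke the sharp quantitative theory rather than reprove it. The bound $[\om]_{A_\infty}\le c_n[\om]_{A_p}$ for the Fujii--Wilson constant and the sharp reverse H\"older inequality with the explicit exponent threshold $\ep_n/[\om]_{A_\infty}$ in (iv) are exactly the results of Hyt\"onen--P\'erez in \cite{HytonenPerez13AnalPDE}. Granting (iv), the openness property (v) follows by a routine computation: reverse H\"older for $\om$ gives $\om\in {\rm RH}_{1+\ep}$, hence $\om^{1+\ep}\in A_p$ after unwinding the definitions, while reverse H\"older applied to the dual weight $\om^{1-p'}\in A_{p'}$ gives $\om\in A_{p-\ep}$; see also \cite[Chapter 7]{Grafakos08} and \cite{MuckenhouptWheeden74TAMS}. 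I do not expect any genuine obstacle here; the only point that truly requires the modern machinery rather than a classical argument is the sharp dependence of the reverse-H\"older exponent on $[\om]_{A_\infty}$ in (iv), which in turn governs the constants appearing in (iii) and (v), and which I would take directly from \cite{HytonenPerez13AnalPDE} rather than re-establish.
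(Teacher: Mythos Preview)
Your proposal is correct. The paper itself does not prove this lemma at all: it simply records these properties as known facts with the blanket citation ``see \cite{MuckenhouptWheeden74TAMS,Grafakos08,HytonenPerez13AnalPDE,HolmesRahmSpencer16StudiaMath}'' preceding the statement, so your write-up is already more detailed than the paper's treatment. Your short direct arguments for (i) and (ii) via H\"older and the $A_p$ definition are standard and correct (indeed they yield the explicit constant $\beta=\alpha^p/[\om]_{A_p}$), and your plan to quote Hyt\"onen--P\'erez for (iii)--(iv) and derive (v) from reverse H\"older on $\om$ and on the dual weight $\om^{1-p'}$ is exactly the usual route and matches the paper's references.
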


\begin{lemma}\label{l-Apq weigh prop}
Let $1<p,q<\infty$, $1/q=1/p-\al/n$ with $0<\al<n$ and $w\in \apq$.
\begin{itemize}
  \item [{\rm(i)}] $\om^p\in\apn$, $\om^q\in A_q$ and  $\om^{-p'}\in A_{p'}$
  \item [{\rm (ii)}]
$
\om\in \apq\Longleftrightarrow \om^q\in A_{q\frac{n-\al}{n}} \Longleftrightarrow \om^q\in A_{1+\frac{q}{p'}}
\Longleftrightarrow \om^{-p'}\in A_{1+\frac{p'}{q}}.
$
\end{itemize}
\end{lemma}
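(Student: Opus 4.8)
The plan is to reduce everything to the well-known structure theorems for Muckenhoupt $A_p$ weights, combined with the duality relation $[\om]_{\apq}^{1/q}=\sup_Q(\langle\om^q\rangle_Q)^{1/q}(\langle\om^{-p'}\rangle_Q)^{1/p'}$ read off directly from Definition \ref{d-Ap weight}. For part (i), to see $\om^q\in A_q$ I would rewrite the $\apq$ condition as $\langle\om^q\rangle_Q\,\langle(\om^q)^{1-q'}\rangle_Q^{q-1}<\infty$ uniformly in $Q$: since $1/q=1/p-\al/n$ with $0<\al<n$ forces $q>p>1$, one has $q'<p'$, and H\"older's inequality with exponent $p'/q'>1$ gives $\langle\om^{-q'}\rangle_Q\le\langle\om^{-p'}\rangle_Q^{q'/p'}$, whence $\langle(\om^q)^{1-q'}\rangle_Q^{q-1}=\langle\om^{-q'}\rangle_Q^{q-1}\le\langle\om^{-p'}\rangle_Q^{(q-1)q'/p'}=\langle\om^{-p'}\rangle_Q^{q/p'}$; multiplying by $\langle\om^q\rangle_Q$ recovers $[\om]_{\apq}^{q}$. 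The claim $\om^{-p'}\in A_{p'}$ is the symmetric statement obtained by interchanging the roles of the two factors (it is literally the $\apq$ condition written with $\om^{-p'}$ as the base weight and exponent $p'$, since $(\om^{-p'})^{1-(p')'}=(\om^{-p'})^{1-p}=\om^{p'}$... here one uses $(p')'=p$ and checks $\langle\om^{-p'}\rangle_Q\langle\om^{q}\rangle_Q^{p'/q}$ is exactly $[\om]_{\apq}^{p'}$). Finally $\om^p\in A_p$ follows because $\om^{-p'}\in A_{p'}$ together with the reflexivity $(A_{p'})'=A_p$ under the map $u\mapsto u^{1-p'}$: indeed $(\om^{-p'})^{1-p'}$ simplifies to a power of $\om$ of the correct sign, and one matches constants.

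For part (ii), I would first record that $A_p$-membership of a weight $u$ is \emph{equivalent} to $u^{1-p'}\in A_{p'}$, and that $A_p\subset A_s$ for $s\ge p$ is false in general but $A_p$ classes are \emph{nested upward}, i.e. $A_p\subset A_s$ for $p\le s$ — this monotonicity plus the openness of $A_p$ (Lemma \ref{l-Ap weight prop}(v)) is what makes the various exponents $q\frac{n-\al}{n}$, $1+\frac{q}{p'}$, $1+\frac{p'}{q}$ interchangeable once one checks they are related by the scaling $1/q=1/p-\al/n$. Concretely: from $1/q=1/p-\al/n$ one computes $\frac{q}{p'}=q-1-\frac{q\al}{n}$, hence $1+\frac{q}{p'}=q-\frac{q\al}{n}=q\frac{n-\al}{n}$, which identifies the first two exponents outright — so those two conditions on $\om^q$ are \emph{literally the same condition}, not merely equivalent. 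The equivalence $\om^q\in A_{1+q/p'}\iff\om^{-p'}\in A_{1+p'/q}$ is then the duality $u\in A_s\iff u^{1-s'}\in A_{s'}$ applied with $s=1+q/p'$: one checks $s'=1+p'/q$ and that $(\om^q)^{1-s'}$ is a fixed power of $\om^{-p'}$, namely $(\om^q)^{-q/p'\cdot(p'/q)}\cdots$ — the bookkeeping collapses to $\om^{-p'}$ after using $1/q=1/p-\al/n$ again. The leftmost equivalence $\om\in\apq\iff\om^q\in A_{q(n-\al)/n}$ is the substance of part (i) in the forward direction, and for the converse one runs the same H\"older argument backwards, using that $q(n-\al)/n=1+q/p'$ so that the $A$-condition unpacks into exactly the two averages appearing in $[\om]_{\apq}$.

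The only genuine obstacle I anticipate is purely notational: keeping the chain of exponent identities $1/q=1/p-\al/n\ \Rightarrow\ 1+q/p'=q(n-\al)/n$, $(1+q/p')'=1+p'/q$, etc., straight, and verifying at each duality step that the transformed weight is the \emph{correct} power of $\om^{-p'}$ or $\om^q$ with matching constant — there is no analytic difficulty beyond H\"older's inequality and the standard fact $u\in A_s\iff u^{1-s'}\in A_{s'}$. Since these are classical facts for which the excerpt already cites \cite{MuckenhouptWheeden74TAMS,Grafakos08}, I would present the argument tersely, spelling out only the exponent arithmetic and the single H\"older estimate relating $\langle\om^{-q'}\rangle_Q$ to $\langle\om^{-p'}\rangle_Q$, and invoke the cited references for the abstract $A_p$ structure theory.
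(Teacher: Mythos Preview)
The paper does not prove this lemma at all: it is stated immediately after the sentence ``We first recall some useful properties of $A_p$ and $\apq$ weights; see \cite{MuckenhouptWheeden74TAMS,Grafakos08,HytonenPerez13AnalPDE,HolmesRahmSpencer16StudiaMath}'' and no argument is given. So there is no ``paper's own proof'' to compare against --- the authors are simply quoting standard facts from the cited references, and your proposal is precisely the standard verification one finds there.

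Your argument is essentially correct. The key observations are exactly the ones you identify: (a) the arithmetic identity $1+q/p'=q(n-\al)/n$, which makes the first two conditions in (ii) literally identical; (b) the duality $u\in A_s\Longleftrightarrow u^{1-s'}\in A_{s'}$ applied with $s=1+q/p'$, $s'=1+p'/q$, which collapses $(\om^q)^{1-s'}=(\om^q)^{-p'/q}=\om^{-p'}$ cleanly; and (c) unpacking the $A_{1+q/p'}$ condition on $\om^q$ gives exactly $\langle\om^q\rangle_Q\langle\om^{-p'}\rangle_Q^{q/p'}$, i.e.\ the $\apq$ condition verbatim, so the leftmost equivalence in (ii) is also an identity, not an inequality. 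For (i), your H\"older step $\langle\om^{-q'}\rangle_Q\le\langle\om^{-p'}\rangle_Q^{q'/p'}$ (and its dual $\langle\om^{p}\rangle_Q\le\langle\om^{q}\rangle_Q^{p/q}$) does the job.

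Two small points of exposition. First, your sentence about monotonicity and openness of $A_p$ classes is a detour you never use: once you observe that the exponents coincide and that duality sends $\om^q$ to $\om^{-p'}$ exactly, there are no containments to bridge, so Lemma~\ref{l-Ap weight prop}(v) is irrelevant here. Second, the line ``$(\om^{-p'})^{1-p}=\om^{p'}\ldots$'' in your part (i) is garbled --- the correct simplification is $(\om^{-p'})^{1-p}=\om^{p'(p-1)}=\om^{p}$, which is what you need for the duality step $\om^{-p'}\in A_{p'}\Rightarrow\om^p\in A_p$. Clean these up and the write-up is fine.
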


In this part, one can see that median value studied by Journ\'e \cite{Journe83} plays a key role in this type of lower estimates.
Compared to \cite{LernerOmbrosiRivera17arxiv}, our method does not use the so-called local mean oscillation, and
in this sense, can be used to a much wider class of function spaces.

\begin{definition}
  By a median value of a real-valued measurable function $f$ over $Q$ we mean a possibly nonunique, real number $m_f(Q)$ such that
\be
|\{x\in Q: f(x)>m_f(Q)\}|\leq |Q|/2
\ee
and
\be
|\{x\in Q: f(x)<m_f(Q)\}|\leq |Q|/2.
\ee
\end{definition}

\begin{proposition}\label{proposition, key lower estimates}
  Let $b$ be a real-valued measurable function.
  Suppose that $\Om$ satisfies the assumption in Theorem \ref{theorem, necessity of boundedness}.
  For every $\g\in (0,1)$,
  there exist $\ep_0>0$ and $k_0>10\sqrt{n}$ depending only on $\Om$, $\g$ and $n$ such that
  the following holds.
  For every cube $Q$, there exists another cube $P$ with the same side length of $Q$ satisfying $|c_Q-c_P|=k_0l_Q$,
  and measurable sets $E_1, E_2\subset Q$ with $Q=E_1\cup E_2$, and $F_1, F_2\subset P$ with $|F_1|=|F_2|=\frac{1}{2}|Q|$, such that
  \bn
  \item $b(x)-b(y)$ do not change sign for all $(x, y)$ in $E_i\times F_i$, $i=1,2$,
  \item $|b(x)-m_b(P)|\leq |b(x)-b(y)|$ for all $(x, y)$ in $E_i\times F_i$, $i=1,2$;
  \item $\Om\left(x-y\right)$ does not change sign  for all $(x, y)$ in $Q\times P$;
  \item $|N_x\cap P|\leq \g|Q|$ for all $x\in Q$, where  $N_x:=\{y\in \bbR^n: |\Om(x-y)|<\ep_0\}$.
  \en
\end{proposition}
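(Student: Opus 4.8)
The plan is to produce the cube $P$, the sets $E_1,E_2,F_1,F_2$, and the constants $\ep_0,k_0$ in two essentially independent stages: first a geometric/harmonic-analytic stage that pins down $P$ relative to $Q$ and secures properties (3) and (4), and then a combinatorial stage using the median value of $b$ that splits $Q$ and $P$ to secure (1) and (2). First I would set up the geometry. By hypothesis $\Om$ does not change sign and is not a.e.\ zero on some open subset $U$ of $\bbS^{n-1}$; fix a small spherical cap $\Sigma\subset U$ on which $\Om$ has a fixed sign, and choose a direction $\th_0$ at the center of $\Sigma$. Take $k_0>10\sqrt n$ large and set $c_P:=c_Q+k_0 l_Q\,\th_0$, with $P$ the cube of side length $l_Q$ centered there. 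Then for $x\in Q$, $y\in P$ the difference $x-y$ is, after normalization, a unit vector lying in a cone around $-\th_0$ whose aperture $\to 0$ as $k_0\to\infty$; choosing $k_0$ large enough this cone meets $\bbS^{n-1}$ inside the cap where $\Om$ has constant sign (using that $\Om$ is homogeneous of degree zero), which gives (3). This construction is scale-invariant, so it is enough to argue for a normalized cube.

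Next, property (4): I want a threshold $\ep_0>0$ so that for each fixed $x\in Q$ the ``bad'' set $N_x=\{y:|\Om(x-y)|<\ep_0\}$ meets $P$ in measure at most $\g|Q|$. Here the point is that $x-y$ ranges, as $y$ ranges over $P$, over a translate of $-P$, and the set where $|\Om|$ is small is, because $\Om$ does not change sign on the relevant cap and is not equivalent to zero there, a set of small measure when $\ep_0$ is small — quantitatively, by the monotone/dominated convergence theorem applied to the distribution function of $\Om$ restricted to the cone of directions $x-y$, we can choose $\ep_0=\ep_0(\Om,\g,n)$ so that the proportion of $y\in P$ with $|\Om(x-y)|<\ep_0$ is below $\g$ uniformly in $x\in Q$; uniformity in $x$ follows because the family of cones $\{x-y:y\in P\}$, $x\in Q$, is controlled independently of scale once $k_0$ is fixed. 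I would phrase this carefully as: $\sup_{x\in Q}|N_x\cap P|/|Q|\to 0$ as $\ep_0\to 0$, which is the content of (4).

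Now the median stage, giving (1) and (2). Let $m:=m_b(P)$ be a median of $b$ on $P$. Split $Q$ by the sign of $b-m$: set $E_1:=\{x\in Q: b(x)\ge m\}$ and $E_2:=\{x\in Q: b(x)<m\}$, so $Q=E_1\cup E_2$. On the $P$ side, the definition of the median guarantees $|\{y\in P: b(y)\le m\}|\ge |P|/2$ and $|\{y\in P: b(y)\ge m\}|\ge |P|/2$; choose $F_1\subset\{y\in P:b(y)\le m\}$ and $F_2\subset\{y\in P:b(y)\ge m\}$ with $|F_1|=|F_2|=\tfrac12|Q|$ (possible since $|P|=|Q|$). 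Then for $(x,y)\in E_1\times F_1$ we have $b(x)\ge m\ge b(y)$, so $b(x)-b(y)\ge 0$ throughout — sign does not change, giving (1) for $i=1$; and moreover $0\le b(x)-m\le b(x)-b(y)$, i.e.\ $|b(x)-m_b(P)|\le |b(x)-b(y)|$, giving (2) for $i=1$. The case $i=2$ is symmetric: for $(x,y)\in E_2\times F_2$, $b(y)\ge m> b(x)$, so $b(x)-b(y)<0$ does not change sign, and $|b(x)-m|=m-b(x)\le b(y)-b(x)=|b(x)-b(y)|$. This closes (1) and (2).

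Finally I would assemble the constants: choose the direction $\th_0$ and cap from $\Om$; choose $k_0=k_0(\Om,\g,n)$ large enough for (3); then choose $\ep_0=\ep_0(\Om,\g,n)$ small enough for (4); the median construction needs no further parameters. The main obstacle I anticipate is property (4): establishing the uniform-in-$x$ smallness of $|N_x\cap P|$ requires quantitative control on how flat $\Om$ can be near the cap — one must rule out $\Om$ being small on a large portion of the cone of directions $\{x-y\}$, and this is exactly where the hypothesis ``$\Om$ does not change sign and is not equivalent to zero on an open subset'' is used, via a limiting argument on the sublevel sets $\{|\Om|<\ep_0\}$; the bookkeeping to make this uniform in $x\in Q$ (equivalently, robust under the bounded family of translations/dilations involved) is the delicate point, whereas (1)--(3) are comparatively soft.
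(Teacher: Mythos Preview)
Your median-value construction for (1) and (2) and the cone argument for (3) are correct and match the paper's proof essentially verbatim. The gap is in (4), and it is precisely the point you flagged as ``delicate,'' but the order in which you propose to fix the constants cannot be made to work. You want to pin down $k_0$ first (to secure (3)) and then send $\ep_0\to 0$ so that $\sup_{x\in Q}|N_x\cap P|/|Q|\to 0$. This limit need not be zero: the hypothesis only says that on some open $U\subset\bbS^{n-1}$ the function $\Om$ keeps a sign and is \emph{not a.e.\ zero}; it allows $\{\Om=0\}\cap U$ to have positive surface measure. With $k_0$ fixed, the directions $(x-y)/|x-y|$ sweep a fixed spherical patch, and if that patch meets $\{\Om=0\}$ in positive measure then $|N_x\cap P|/|Q|$ is bounded below by a fixed positive number for every $\ep_0>0$. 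So ``dominated convergence on the sublevel sets $\{|\Om|<\ep_0\}$'' cannot close the argument.

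The paper reverses the order. One first picks a point $\th_0\in U$ of \emph{approximate continuity} of $\Om$ with $\Om(\th_0)=2\ep_0>0$ (such points exist a.e.\ by Lebesgue's theorem, and one lands in $\{\Om>0\}$ because that set has positive measure in $U$). Approximate continuity gives, for every $\b\in(0,1)$, a radius $r_\b$ so that $\s\big(\{\Om\ge\ep_0\}\cap B(\th_0,r_\b)\big)\ge(1-\b)\,\s\big(B(\th_0,r_\b)\cap\bbS^{n-1}\big)$. One then sets $P:=Q-l_Q\cdot\dfrac{c_n\th_0}{r_\b}$, so that $k_0\sim 1/r_\b$. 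A polar-coordinate count shows $|N_x\cap P|\ls \b\,|Q|$ uniformly in $x\in Q$; finally one takes $\b$ small enough (hence $k_0$ large) to force $|N_x\cap P|\le\g|Q|$ and $k_0>10\sqrt n$. In short: $\ep_0$ is fixed \emph{first} from the value $\Om(\th_0)$, and then $k_0$ is driven to infinity (the cap shrinks) to control the density of the bad set --- not the other way around.
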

\begin{proof}
Without loss of generality, assume $\Omega$ is nonnegative on an open set of $\bbS^{n-1}$.
By the assumption of $\Om$, there exists a point $\th_0$ of approximate continuity of $\Omega$ such that
$\Omega(\th_0)=2\ep_0$ for some $\ep_0>0$ (see \cite[pp.46-47]{EvansGariepy92} for the definition of approximate continuity).
It follows from the definition of approximate continuity that for every
$\b\in (0,1)$, there exists a small constant $r_{\b}$ such that
\be
\s(\{\th\in B(\th_0,r_{\b})\cap \bbS^{n-1}: \Omega(\th)\geq \ep_0\})\geq (1-\b)\s(B(\th_0,r_{\b})\cap \bbS^{n-1})
\ee
Let $\G_{\b}$ be the cone containing all $x\in \bbR^n$ such that $x'\in B(\th_0,r_{\b})\cap \bbS^{n-1}$.

There exists a vector $v_{\b}:=\frac{c_n\th_0}{r_{\b}}$, such that
\be
2Q_0+v_{\b}\in \G_{\b}.
\ee
For a fixed cube $Q$, we set
$
P_{\b}:=Q-l_Q v_{\b}.
$
Thus, $|c_Q-c_{P_{\b}}|=\frac{c_n}{r_{\b}}l_Q$.
Observing that $Q-P_{\b}\subset 2l_Q Q_0+l_Q v_{\b}\subset \G_{\b}$, for $x\in Q$ we obtain that
\be
\begin{split}
  |P_{\beta}\cap N_x|
  = &
  |(P_{\beta}-x)\cap N_0|
  \\
  \leq &
  l_Q^n|(2Q_0+v_{\b})\cap (-N_0)|
    \\
  \leq &
  l_Q^n\cdot c_n\cdot |v_{\b}|^{n-1}\cdot \b r_{\b}^{n-1}
  \leq c_n\b|Q|.
\end{split}
\ee
Take $\b=\b_0$ sufficiently small such that
\be
|P_{\beta_0}\cap N_x|\leq \g|Q|,\ \    k_0:=\frac{c_n}{r_{\b_0}}>10\sqrt{n}.
\ee
Set
\be
E_1:=\{x\in Q: b(x)\geq m_b(P_{\beta_0})\},\ \ \  \ \ \ E_2:=\{x\in Q: b(x)\leq m_b(P_{\beta_0})\},
\ee
and
\be
F_1\subset \{y\in P_{\beta_0}: b(y)\leq m_b(P_{\beta_0})\},\ \ \  \ \ \ \ F_2\subset \{y\in P_{\beta_0}: b(y)\geq m_b(P_{\beta_0})\},
\ee
such that $|F_1|=|F_2|=\frac{|P_{\beta_0}|}{2}=\frac{|Q|}{2}$.
We get the desired conclusion by
\be
\begin{split}
|b(x)-b(y)|
=&
|b(x)-m_b(P_{\beta_0})|+|m_b(P_{\beta_0})-b(y)|\geq |b(x)-m_b(P_{\beta_0})|
\end{split}
\ee
for $(x,y)\in E_i\times F_i$, $i=1,2$.
\end{proof}

The proof of Theorem \ref{theorem, necessity of boundedness} is reduced to the following proposition.

\begin{proposition}\label{proposition, lower estimates}
Let $1<p,q<\infty$, $0< \al\leq 1$, $0\leq \b<n$, $m\al+\b<n$ $1/q=1/p-(m\al+\b)/n$, $m\in \mathbb{Z}^+$.
  Let $\Om$ be a measurable function on $\bbS^{n-1}$, which does not change sign and is not equivalent to zero
  on some open subset of \,$\mathbb S^{n-1}$.  Let $w\in \apq$ and $b$ be a real-valued measurable function.
 For a given cube $Q$, let $P,E_i,F_i$, $i=1,2,$ be the sets associated with $Q$ mentioned in
 Proposition \ref{proposition, key lower estimates} with $\g=\frac{1}{4}$.
 Then there exists a positive constant $C$ independent of $Q$ and $b$ such that for $f_i:=[\om^p(F_i)]^{-1/p}\chi_{F_i}$, $i=1,2,$
  \be
  \sum_{i=1,2}\|(T_{\Omega,\,\b})_b^m(f_i)\|_{L^q(\om^q,Q)}\ge C\coa(b;Q)^m.
  \ee
\end{proposition}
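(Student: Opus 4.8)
The plan is to feed the geometric data of Proposition \ref{proposition, key lower estimates} into the kernel representation of the iterated commutator, extract a \emph{cancellation--free} pointwise lower bound for $(T_{\Om,\b})^m_b(f_i)$ on $E_i$, and then collapse everything to an unweighted estimate on the single cube $Q$ via the $\apq$ condition. First I would record the identity, proved by induction on $m$ from \eqref{e-itera com},
\[
(T_{\Om,\b})^m_b f(x)=\int_{\rn}\frac{\Om(x-y)}{|x-y|^{n-\b}}\,\bigl(b(x)-b(y)\bigr)^m\,f(y)\,dy,
\]
and fix $i\in\{1,2\}$ and $x\in E_i$. Since $f_i=[\om^p(F_i)]^{-1/p}\chi_{F_i}\ge 0$, since $\Om(x-y)$ keeps a fixed sign for $y\in P$ by part (3) of Proposition \ref{proposition, key lower estimates}, and since $\bigl(b(x)-b(y)\bigr)^m$ keeps a fixed sign for $y\in F_i$ by part (1) (as $b(x)-b(y)$ does not change sign there), the integrand does not change sign in $y$ over $F_i$, so the modulus of the integral equals the integral of the modulus. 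Then I would discard the part $F_i\cap N_x$ of the domain, use $|\Om(x-y)|\ge\ep_0$ on the rest (definition of $N_x$), bound $|x-y|\le(k_0+\sqrt n)\,l_Q$ for $x\in Q,\ y\in P$, invoke part (2) to replace $|b(x)-b(y)|$ by $|b(x)-m_b(P)|$, and invoke part (4) with $\g=\tfrac14$ together with $F_i\subset P$ to get $|F_i\setminus N_x|\ge|F_i|-|P\cap N_x|\ge\tfrac14|Q|$; recalling $|Q|=l_Q^n$, this yields
\[
\bigl|(T_{\Om,\b})^m_b(f_i)(x)\bigr|\ \gtrsim\ l_Q^{\b}\,|b(x)-m_b(P)|^m\,[\om^p(F_i)]^{-1/p}\qquad(x\in E_i),
\]
with implied constant depending only on $\Om,n,\b$ (recall $\ep_0,k_0$ depend only on $\Om,\g,n$).

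Next I would raise this bound to the power $q$, integrate over $E_i\subset Q$ against $\om^q$, use $[\om^p(F_i)]^{-q/p}\ge[\om^p(P)]^{-q/p}$ (as $F_i\subset P$), sum over $i=1,2$ with $a_1+a_2\ge(a_1^q+a_2^q)^{1/q}$ for $q\ge 1$, and use that $E_1\cup E_2=Q$ while $|b-m_b(P)|^{mq}$ vanishes on $E_1\cap E_2\subset\{b=m_b(P)\}$, to arrive at
\[
\sum_{i=1,2}\bigl\|(T_{\Om,\b})^m_b(f_i)\bigr\|_{L^q(\om^q,Q)}\ \gtrsim\ l_Q^{\b}\,[\om^p(P)]^{-1/p}\Bigl(\int_Q|b(x)-m_b(P)|^{mq}\,\om^q(x)\,dx\Bigr)^{1/q}.
\]
To strip off the weight I would note that $1/q=1/p-(m\al+\b)/n$ with $m\al+\b>0$ forces $q>p$, hence $q'<p'$; then H\"older, the power mean inequality $(|Q|^{-1}\int_Q\om^{-q'})^{1/q'}\le(|Q|^{-1}\int_Q\om^{-p'})^{1/p'}$, and the $\apq$ bound on the last factor give $\bigl(\int_Q h^q\om^q\bigr)^{1/q}\gtrsim|Q|^{-1}\om^q(Q)^{1/q}\int_Q h$ for $h=|b-m_b(P)|^m$. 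Combining this with the power mean inequality $\int_Q h\ge|Q|^{1-m}\bigl(\int_Q|b-m_b(P)|\bigr)^m$ and with the elementary bound $\int_Q|b-c|\,dx\ge\tfrac12\int_Q|b-b_Q|\,dx=\tfrac12|Q|^{1+\al/n}\coa(b;Q)$, valid for any constant $c$ (take $c=m_b(P)$), I would obtain
\[
\Bigl(\int_Q|b-m_b(P)|^{mq}\,\om^q\Bigr)^{1/q}\ \gtrsim\ \om^q(Q)^{1/q}\,|Q|^{m\al/n}\,\coa(b;Q)^m.
\]

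It then remains to check that the scalar prefactor $l_Q^{\b}\,[\om^p(P)]^{-1/p}\,\om^q(Q)^{1/q}\,|Q|^{m\al/n}$ is bounded below by a positive constant independent of $Q$. Here $l_Q^{\b}|Q|^{m\al/n}=|Q|^{(m\al+\b)/n}=|Q|^{1/p-1/q}$ by the scaling relation. Since $P$ and $Q$ have equal side length and $|c_Q-c_P|=k_0l_Q$, we have $P\subset\La Q$ with $\La:=2k_0+2\sqrt n$; because $\om^p\in\apn$ (Lemma \ref{l-Apq weigh prop}(i)), Lemma \ref{l-Ap weight prop}(ii) gives $\om^p(P)\le\om^p(\La Q)\lesssim\om^p(Q)$, hence $[\om^p(P)]^{-1/p}\gtrsim[\om^p(Q)]^{-1/p}$. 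Finally $|Q|^{1/p-1/q}[\om^p(Q)]^{-1/p}\om^q(Q)^{1/q}=\bigl(|Q|^{-1}\int_Q\om^q\bigr)^{1/q}\bigl(|Q|^{-1}\int_Q\om^p\bigr)^{-1/p}\ge 1$ by the power mean inequality for $p<q$. Assembling the three displays then gives $\sum_{i=1,2}\|(T_{\Om,\b})^m_b(f_i)\|_{L^q(\om^q,Q)}\ge C\,\coa(b;Q)^m$ with $C$ depending only on $n,p,q,\al,\b,\Om$ and $[\om]_{\apq}$, in particular independent of $Q$ and $b$.

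I expect the pointwise step to be the crux: the whole argument hinges on there being \emph{no cancellation} in the $y$-integral defining $(T_{\Om,\b})^m_b(f_i)$, which is exactly why Proposition \ref{proposition, key lower estimates} is built to supply two pairs $(E_i,F_i)$ pinning the sign of $b(x)-b(y)$ together with a cube $P$ on which $\Om(x-\cdot)$ has a fixed sign and most of whose mass avoids the near--zero set $N_x$ of the kernel; correspondingly a single pair only controls $\int_{E_i}$, so one genuinely needs $E_1\cup E_2=Q$ to recover the full oscillation of $b$ over $Q$. The other non--routine point is the weighted--to--unweighted passage, where the relation $1/q=1/p-(m\al+\b)/n$ is used twice: to secure $q>p$ (so every H\"older and power--mean step points the right way) and to make all powers of $|Q|$ cancel exactly.
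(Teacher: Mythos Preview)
Your argument is correct and follows essentially the same route as the paper: the same pointwise cancellation--free lower bound on $E_i$ from Proposition \ref{proposition, key lower estimates}, the same use of $E_1\cup E_2=Q$ to recover the full cube, the same Jensen reduction $\int_Q|b-m_b(P)|^m\ge|Q|^{1-m}\bigl(\int_Q|b-m_b(P)|\bigr)^m$, and the same combination of $q'<p'$, $p<q$ (both from $m\al+\b>0$), the doubling of $\om^p\in A_p$, and the $\apq$ condition to close the weight estimate. The only cosmetic difference is the order of operations: the paper applies H\"older with $\om^{-q'}$ \emph{first} to pass from $\|(T_{\Om,\b})^m_b(f_i)\|_{L^q(\om^q,Q)}$ down to the unweighted $L^1(Q)$-integral and then inserts the pointwise bound, whereas you insert the pointwise bound directly in $L^q(\om^q)$ and only afterwards descend to $L^1(Q)$ via H\"older---the ingredients and constants are identical.
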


Now assume the conclusion of Proposition \ref{proposition, lower estimates} for the moment, we present the proof of Theorem \ref{theorem, necessity of boundedness}.
For any cube $Q$, take $P$, $f_i$ as in Proposition \ref{proposition, lower estimates}.
We then have
  \be
  \sum_{i=1,2}\|(T_{\Omega,\,\b})_b^m(f_i)\|_{L^q(\om^q,Q)}\gtrsim\coa(b;Q)^m,
  \ee
Hence, by \eqref{e-Lpq bdd restri},
\be
\coa(b;Q)^m\lesssim \sum_{i=1,2}\|(T_{\Omega,\,\b})_b^m(f_i)\|_{L^q(\om^q,Q)}
\lesssim
\sum_{i=1,2}\|f_i\|_{L^p(\om^p)}\leq 2.
\ee
This yields $b\in \bmoa$ and finishes the proof of Theorem \ref{theorem, necessity of boundedness}.

\smallskip

\begin{proof}[Proof of Proposition \ref{proposition, lower estimates}]
Using H\"{o}lder's inequality, we obtain
  \be
  \begin{split}
  \int_{Q}|(T_{\Omega,\,\b})_b^m(f_i)(x)|dx
    \leq &
  \left(\int_{Q}|(T_{\Omega,\,\b})_b^m(f_i)(x)|^q\om^q(x)dx\right)^{1/q}\left(\int_{Q}\om^{-q'}(x)dx\right)^{1/q'}.
  \end{split}
  \ee
By the H\"{o}lder inequality, the definition of $f_i$ and Proposition \ref{proposition, key lower estimates}, for $x\in E_i$,
\begin{align*}
  \left|(T_{\Omega,\,\b})_b^m(f_i)(x)\right|
  \gtrsim &
  \frac{|\om^p(F_i)|^{-1/p}}{|Q|^{1-\b/n}}|b(x)-m_b(P)|^m\int_{F_i}|\Om(x-y)|dy
  \\
  \geq &
  \frac{|\om^p(F_i)|^{-1/p}}{|Q|^{1-\b/n}}|b(x)-m_b(P)|^m\int_{F_i\bs (N_x\cap P)}|\Om(x-y)|dy
  \\
  \geq &
  \frac{\ep_0|\om^p(F_i)|^{-1/p}}{|Q|^{1-\b/n}} |b(x)-m_b(P)|^m\cdot |F_i\bs (N_x\cap P)|
  \\
  \geq &
  \frac{\ep_0|\om^p(F_i)|^{-1/p}}{|Q|^{1-\b/n}} |b(x)-m_b(P)|^m\cdot \frac{|Q|}{4}
  \\
  \sim &
  |\om^p(F_i)|^{-1/p}|Q|^{\frac{\b}{n}}|b(x)-m_b(P)|^m
  \\
  \gtrsim &
  |\om^p(P)|^{-1/p}|Q|^{\frac{\b}{n}}|b(x)-m_b(P)|^m,
\end{align*}
where we use the facts $F_i\subset P$, $|F_i|=\frac{|P|}{2}$ and $|N_x\cap P|\leq \frac{|P|}{4}$.
This and that fact $Q=E_1\cup E_2$ yield that for $x\in Q$,
\be
\left|(T_{\Omega,\,\b})_b^m(f_1)(x)\right|+\left|(T_{\Omega,\,\b})_b^m(f_2)(x)\right|
\gtrsim
|\om^p(P)|^{-1/p}|Q|^{\frac{\b}{n}}|b(x)-m_b(P)|^m\chi_Q(x).
\ee
Hence,
\begin{align*}
  &\sum_{i=1,2}\left(\int_{Q}|(T_{\Omega,\,\b})_b^m(f_i)(x)|^q\om^q(x)dx\right)^{1/q}\left(\int_{Q}\om^{-q'}(x)dx\right)^{1/q'}
  \\
 &\quad \gtrsim
  \sum_{i=1,2}\int_{Q}|(T_{\Omega,\,\b})_b^m(f_i)(x)|dx
  \\
 &\quad \gtrsim
  |\om^p(P)|^{-1/p}|Q|^{\frac{\b}{n}}\int_Q|b(x)-m_b(P)|^mdx
  \\
 &\quad \geq
  |\om^p(P)|^{-1/p}|Q|^{\frac{\b}{n}}\left(\int_Q|b(x)-m_b(P)|dx\right)^m|Q|^{1-m}
  \\
  &\quad\geq
  |\om^p(P)|^{-1/p}|Q|^{1+\frac{m\al+\b}{n}}\tcoa(b;Q)^m
  =
  |\om^p(P)|^{-1/p}|Q|^{1/p+1/q'}\tcoa(b;Q)^m.
\end{align*}
By the fact $P\subset 4k_0Q$,
we use the definition of $\apq$ and the H\"{o}lder inequality to deduce that
\begin{eqnarray*}
 && \left(\frac{1}{|Q|}\int_{P}\om^p(x)dx\right)^{1/p}\left(\frac{1}{|Q|}\int_{Q}\om^{-q'}(x)dx\right)^{1/q'}\\
 && \quad\lesssim
  \left(\frac{1}{|Q|}\int_{Q}\om^p(x)dx\right)^{1/p}\left(\frac{1}{|Q|}\int_{Q}\om^{-q'}(x)dx\right)^{1/q'}\\
 && \quad\leq
  \left(\frac{1}{|Q|}\int_{Q}\om^q(x)dx\right)^{1/q}\left(\frac{1}{|Q|}\int_{Q}\om^{-p'}(x)dx\right)^{1/p'}\lesssim 1.
\end{eqnarray*}
The above two estimates yield that
\be
\sum_{i=1,2}\|(T_{\Omega,\,\b})_b^m(f_i)\|_{L^q(\om^q)}\gtrsim \tcoa(b;Q)^m.
\ee
\end{proof}

\section{Characterization of $\cmoa$ by fractional mean oscillation}\label{s2}

In this section, we present the proof of Theorem \ref{theorem, characterization of cmoa}.
Different from the $\bmo$ case in \cite{Uchiyama78TohokuMathJ}, since a $\lipa$ function is continuous,
the simple functions can no longer be used to approximate other functions in the topology of $\lipa$.
In order to fix this situation, we find a kind of  $Lip_1(\bbR^n)$ functions which is a suitable replacement for
the simple functions; see $\{\mathcal F_Q\}$ in Proposition \ref{proposition, Lip on Q} below. This kind of $Lip_1(\bbR^n)$ functions can be constructed by some functions $\{\psi^Q\}$ defined on the vertexes of cubes.
Thanks to the $Lip_1(\bbR^n)$ functions mentioned above,
we prove Theorem \ref{theorem, characterization of cmoa}
by the geometric part of arguments in \cite{Uchiyama78TohokuMathJ} with some careful technical modifications fitting our cases.
More precisely, for any given function $f\in \cmoa$ and $\epsilon>0$, we first choose a finite family $\cq$ of dyadic cubes, define the functions $\{\psi^Q\}$ and $\{\mathcal F_Q\}$ for all $Q\in\cq$,
and further construct a function $h_\epsilon\in C_c^\infty(\rn)$ via $\{\mathcal F_Q\}$ which approximates to $f$ in the norm of $\bmoa$.

\subsection{Lipschitz function associated with cubes}\label{s2.1}
In this section, we use $V_Q$ to denote the set of all vertexes of a given cube $Q$.
By a weighted cube we mean that there exists a vertex mapping:
\be
\psi^Q: V_Q\longrightarrow \bbC.
\ee
The oscillation of a weighted cube $Q$ is defined by
\begin{align}\label{e-osci weigh cube defn}
\scrm_Q: =\inf_{c\in \bbC}\sum_{a\in V_Q}|\psi^Q(a)-c|.
\end{align}
For a point $x:=(x_1,\cdots,x_n)\in \rn$, we define the product function by
\be
x_{\Pi}: =\prod_{j=1}^nx_j.
\ee

\begin{proposition}\label{proposition, Lip on Q}
For a given weighted cube $Q\subset \mathbb R^n$ with vertex mapping $\psi^Q$, we have following properties:
  \bn
  \item There is a unique function $\cfq$ defined on $\bbR^n$ satisfying:
    \bn
    \item $\cfq=\psi^Q$ on $V_Q$,
    \item $\cfq$ is linear for each component $x_j$, $j=1,2,\cdots,n$, when the other $n-1$ components are fixed.
    \en
    Moreover, this unique function associated with weighted cube $Q$ can be expressed by
\begin{align}\label{e-F_Q represen}
    \cfq(x)=\sum_{a\in V_Q}\frac{(2c_Q-x-a)_{\Pi}}{(2c_Q-2a)_{\Pi}}\psi^Q(a)\chi_Q(x).
    \end{align}
  \item Let $\tQ$ be a cube on $\bbR^n$, or on the m-dimensional hyperplane $P_m$ with $m<n$.
  Set $\psi^{\tQ}:=\cfq|_{V_{\tQ}}$. Then
  \be
  \cf_{\tQ}=\cfq|_{P_m}.
  \ee
  \item $\cfq$ is smooth with the following
  Lipschitz bound:
\be
\left|\frac{\partial\cfq(x)}{\partial x_i}\right|\leq |Q|^{-1/n}\scrm_Q,\ \ x\in Q, \ \  i=1,2,\cdots, n.
\ee
  \en

\end{proposition}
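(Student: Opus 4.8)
The plan is to treat the three items in order, since each builds on the previous one. For item (1), I would first establish existence and uniqueness by a dimension/interpolation argument: a function that is affine (linear) in each variable $x_j$ separately, when the remaining variables are fixed, is precisely a multilinear polynomial, i.e.\ an element of the $2^n$-dimensional space spanned by the monomials $\prod_{j\in S}x_j$, $S\subset\{1,\dots,n\}$. Since $|V_Q|=2^n$, prescribing the values on $V_Q$ is a linear system with as many equations as unknowns, so it suffices to show the only multilinear polynomial vanishing on all vertices of $Q$ is $0$; this follows by induction on $n$ (fix $x_n$ at each of its two vertex values, apply the one-dimensional fact that an affine function vanishing at two points is zero, then peel off the variable). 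For the explicit formula \eqref{e-F_Q represen}, I would verify directly that the right-hand side is multilinear in $x$ (each summand is a product over the $n$ coordinates of an affine function of the corresponding coordinate) and that evaluating at a vertex $a'\in V_Q$ kills every term except $a=a'$: for $a\neq a'$ there is at least one coordinate $j$ with $a_j = a'_j$ and $2(c_Q)_j - a'_j - a_j$ equals either $0$ (when $a_j\ne a'_j$ in the "opposite" sense) — more precisely, writing the coordinates of vertices as $(c_Q)_j \pm l_Q/2$, one checks $(2c_Q - a' - a)_\Pi$ has a zero factor exactly when $a\ne a'$, and equals $(2c_Q-2a)_\Pi$ when $a=a'$. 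Hence the formula satisfies both (a) and (b), and by uniqueness it is $\cfq$.

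For item (2), the key observation is that the restriction of a multilinear polynomial on $\bbR^n$ to an $m$-dimensional coordinate(-parallel) subcube is again multilinear in the relevant $m$ coordinates (the fixed coordinates just become constants multiplying the surviving monomials). Therefore $\cfq|_{P_m}$ is a function on $\tQ$ that is affine in each of its $m$ variables separately and, by construction of $\psi^{\tQ}:=\cfq|_{V_{\tQ}}$, agrees with $\psi^{\tQ}$ on $V_{\tQ}$. By the uniqueness part of item (1) applied in dimension $m$, it must coincide with $\cf_{\tQ}$. (If $\tQ$ is a genuine $n$-dimensional subcube the same argument applies with $m=n$.)

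For item (3), smoothness is immediate from \eqref{e-F_Q represen} since $\cfq$ is a polynomial on $\mathring Q$. For the gradient bound, I would fix $i$ and a point $x\in Q$, freeze the other $n-1$ coordinates, and reduce to the one-dimensional estimate: along the segment $x + t e_i$ through $Q$, $\cfq$ is affine with slope $\partial\cfq/\partial x_i$, and its values at the two endpoints of that segment are themselves obtained by the same multilinear interpolation applied to the $2^{n-1}$ vertex values lying "above" the $(n-1)$-dimensional face; so the difference of endpoint values is a convex combination (in absolute value, bounded by a suitable combination) of differences of vertex values. Concretely, using item (2) to collapse to the edge of $Q$ parallel to $e_i$ through a subface, I would bound $\left|\partial\cfq/\partial x_i\right| \le l_Q^{-1}\bigl(\text{max difference of }\psi^Q\text{ across that edge}\bigr)$; then, subtracting the optimal constant $c$ from $\scrm_Q$ in \eqref{e-osci weigh cube defn} (the gradient is unchanged under $\psi^Q\mapsto\psi^Q-c$), each single vertex-value difference is at most $\sum_{a\in V_Q}|\psi^Q(a)-c|=\scrm_Q$, giving $\left|\partial\cfq/\partial x_i\right|\le l_Q^{-1}\scrm_Q = |Q|^{-1/n}\scrm_Q$.

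The main obstacle I anticipate is item (3): getting the clean constant $|Q|^{-1/n}$ rather than a dimensional constant times it requires carefully organizing the telescoping of the multilinear interpolation along a single coordinate direction and exploiting the translation-invariance of the derivative under adding a constant to $\psi^Q$, so that the full oscillation sum $\scrm_Q$ (rather than a crude $2^n$-fold bound) controls each relevant pairwise difference. Items (1) and (2) are essentially bookkeeping with multilinear polynomials once the right framework is set up.
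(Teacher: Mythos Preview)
Your proposal is correct and follows essentially the same route as the paper. For (1) the paper derives the formula constructively by iterating the one-variable identity $\cfq(x)=(1-x_1)\cfq(0,x_2,\dots)+x_1\cfq(1,x_2,\dots)$ coordinate by coordinate (which yields uniqueness and the formula simultaneously), whereas you argue abstract uniqueness first via dimension-counting and then verify the formula; for (3) the paper differentiates the explicit formula directly, uses the partition-of-unity identity $\sum_{\varepsilon}\prod_j\frac{1-x_j-\varepsilon_j}{1-2\varepsilon_j}=1$ to subtract the optimal constant $c$, and bounds each coefficient in absolute value by $1$, while you phrase the same computation geometrically as a convex combination of edge differences---both arrive at the sharp constant $|Q|^{-1/n}$ by exactly the mechanism you anticipated, namely invariance of the gradient under $\psi^Q\mapsto\psi^Q-c$.
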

\begin{proof}
We first prove property (1).
By a standard translation and dilation argument, we only need to deal with $Q:=[0,1]^n$.
Since $\cfq$ is linear for each component mentioned above, we decompose $\cfq$ by
\begin{align*}
\cfq(x_1,x_2,\cdots,x_n)
=&
(1-x_1)\cfq(0,x_2,\cdots,x_n)+x_1\cfq(1,x_2,\cdots,x_n)
\\
=&
\sum_{\ep_1=0,1}\frac{1-x_1-\ep_1}{1-2\ep_1}\cfq(\ep_1,x_2,\cdots,x_n)
\\
=&
\sum_{\ep_1=0,1}\frac{1-x_1-\ep_1}{1-2\ep_1}\sum_{\ep_2=0,1} \frac{1-x_2-\ep_2}{1-2\ep_2}\cdot\cfq(\ep_1,\ep_2,x_3,\cdots,x_n)
\\
=&
\sum_{(\ep_1,\ep_2)\in\{0,1\}^2}\frac{1-x_1-\ep_1}{1-2\ep_1}\cdot \frac{1-x_2-\ep_2}{1-2\ep_2}\cdot\cfq(\ep_1,\ep_2,x_3,\cdots,x_n)
\\
=&\cdots =
\sum_{\ep\in\{0,1\}^n}\frac{(1-x-\ep)_{\Pi}}{(1-2\ep)_{\Pi}}\cfq(\ep)
=\sum_{\ep\in\{0,1\}^n}\frac{(1-x-\ep)_{\Pi}}{(1-2\ep)_{\Pi}}\psi^Q(\ep),
\end{align*}
where we use property $(a)$ in the last equality.

Next, we verify property (2).
Note that both $\cf_{\tQ}$ and $\cfq$ are linear for each component on $P_m$.
Moreover, they share the same value at each vertex in $V_{\tQ}$. By property (1), they must be equal.

Finally, we proceed to the proof of (3).
By a standard translation and dilation argument, we only need to deal with $Q:=[0,1]^n$.
In this case, write
\be
\begin{split}
  \cfq(x)
  =
  \sum_{\ep\in\{0,1\}^n}\frac{(1-x-\ep)_{\Pi}}{(1-2\ep)_{\Pi}}\psi^Q(\ep)
  =
  \sum_{\ep\in \{0,1\}^n}\prod_{j=1}^n\frac{1-x_j-\ep_j}{1-2\ep_j}\psi^Q(\ep).
\end{split}
\ee
Note that
\begin{align}\label{e-ident formu prod}
  \sum_{\ep\in \{0,1\}^n}\prod_{j=1}^n\frac{1-x_j-\ep_j}{1-2\ep_j}
  =
  \prod_{j=1}^n\left(\frac{1-x_j-0}{1-0}+\frac{1-x_j-1}{1-2}\right)=1.
\end{align}
Hence, for any $x\in \bbR^n $ and $c\in\mathbb C$,
\be
\begin{split}
  \frac{\partial\cfq(x)}{\partial x_i}
  = &
  \frac{\partial(\cfq(x)-c)}{\partial x_i}
  =
  \frac{\partial(\sum_{\ep\in \{0,1\}^n}\prod_{j=1}^n\frac{1-x_j-\ep_j}{1-2\ep_j}(\psi^Q(\ep)-c))}{\partial x_i},
\end{split}
\ee
which yields that
\be
\left|\frac{\partial\cfq(x)}{\partial x_i}\right|
\leq
\sum_{\ep\in \{0,1\}^n}|\psi^Q(\ep)-c|,\ \ \ x\in Q.
\ee
The desired conclusion follows by taking infimum over $c$.
\end{proof}

The following is a useful regularity proposition in the proof of Theorem \ref{theorem, characterization of cmoa}.

\begin{proposition}\label{proposition, regularity}
  Let $\al\in (0,1]$, $Q, \tQ$ be two cubes satisfying that $\tQ\subset Q$.
  Suppose that $\coa(f;P)<\epsilon$
  for all cubes $P\subset Q$ with $|P|\geq |\tQ|$.
  Then
  \be
  |f_Q-f_{\tQ}|\leq C|Q|^{\frac{\al}{n}}\ep,
  \ee
  where the constant $C$ is independent of $Q$, $\tQ$ and $f$.
\end{proposition}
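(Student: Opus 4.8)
The plan is to prove the estimate by a telescoping argument through a dyadic-type chain of cubes connecting $\tQ$ to $Q$. The key point is that, since $|\widetilde Q|\le |P|$ is allowed for all intermediate cubes $P\subset Q$, the hypothesis $\coa(f;P)<\epsilon$ applies to every cube in the chain, and the only thing one has to control is the number of links.

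First I would reduce to a chain of cubes $\tQ=Q_0\subset Q_1\subset\cdots\subset Q_N=Q$ with the property that consecutive cubes are comparable, say $|Q_{j+1}|\le 2^n|Q_j|$ (equivalently $l_{Q_{j+1}}\le 2 l_{Q_j}$), each $Q_j\subset Q$, and $N\le C$ with $C$ an absolute constant independent of the cubes. Concretely: let $l_0:=l_{\tQ}$ and $l_N:=l_Q$; pick side lengths $l_j$ doubling from $l_0$ up to $l_N$ (the last step possibly shorter than a doubling), so $N=\lceil \log_2(l_Q/l_{\tQ})\rceil$, and choose each $Q_j$ to be a cube of side $l_j$ with $\tQ\subset Q_j\subset Q$ (one can, e.g., keep a common corner of $\tQ$ fixed and grow the cube, staying inside $Q$ since $l_j\le l_Q$). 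Wait --- this makes $N$ depend on $l_Q/l_{\tQ}$, which is not bounded; so a naive telescoping does not give a constant $C$. The correct observation is that the \emph{contributions decay geometrically}: for two concentric-ish comparable cubes $P'\subset P$ with $|P|\le 2^n|P'|$ one has
\[
|f_{P'}-f_P|\le \frac{1}{|P'|}\int_{P'}|f(x)-f_P|\,dx\le \frac{|P|}{|P'|}\coa(f;P)|P|^{\al/n}\le 2^n\,\epsilon\,|P|^{\al/n}.
\]
Since $|Q_j|^{\al/n}=2^{(N-j)\al/n}\cdot$(something) $\lesssim |\widetilde Q|^{\al/n} 2^{j\al/n}$ wait --- rather $|Q_j|^{\al/n}\le |Q|^{\al/n}$ for all $j$, but to get a \emph{convergent} geometric sum I should sum from the $\tQ$ end: $|Q_j|^{\al/n}=\bigl(l_j\bigr)^{\al}\le (2^{j} l_{\tQ})^{\al}$, so $\sum_{j=0}^{N-1}|f_{Q_j}-f_{Q_{j+1}}|\lesssim \epsilon\sum_{j=0}^{N-1}(2^j l_{\tQ})^\al = \epsilon\, l_{\tQ}^\al \frac{2^{N\al}-1}{2^\al-1}\lesssim \epsilon\, l_{Q}^{\al}=\epsilon |Q|^{\al/n}$, using $2^N l_{\tQ}\sim l_Q$ and $\al>0$ so the geometric series is dominated by its last term. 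This is exactly where $\al>0$ (hence the geometric ratio $2^\al>1$) is essential and why the constant ends up absolute.

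Thus the main steps in order are: (i) construct the nested chain $\tQ=Q_0\subset\cdots\subset Q_N=Q$ with $l_{Q_{j+1}}\le 2 l_{Q_j}$, all $Q_j\subset Q$ (so the hypothesis applies), and $2^N l_{\tQ}\le 2 l_Q$; (ii) for each $j$ bound $|f_{Q_j}-f_{Q_{j+1}}|\le \frac{|Q_{j+1}|}{|Q_j|}\coa(f;Q_{j+1})|Q_{j+1}|^{\al/n}\le 2^n\epsilon\,|Q_{j+1}|^{\al/n}\le 2^n\epsilon\,(2^{\,j+1}l_{\tQ})^{\al}$; (iii) sum the geometric series in $j$ and absorb everything into $C\epsilon|Q|^{\al/n}$; (iv) conclude $|f_Q-f_{\tQ}|\le\sum_{j=0}^{N-1}|f_{Q_j}-f_{Q_{j+1}}|\le C|Q|^{\al/n}\epsilon$. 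The one point requiring a little care --- and the part I expect to be the main (minor) obstacle --- is step (i): arranging a chain of cubes that simultaneously (a) nests, (b) has comparable consecutive volumes, and (c) stays inside $Q$ while reaching all the way up to $Q$ itself; fixing a common corner of $\tQ$ and inflating toward $Q$, then for the final one or two links sliding to match $Q$, handles this, possibly at the cost of one extra intermediate cube of side $\le l_Q$, which changes nothing in the estimate. Everything else is the routine averaging inequality $|f_{P'}-f_P|\le\frac{|P|}{|P'|}\coa(f;P)|P|^{\al/n}$ and summing a geometric series.
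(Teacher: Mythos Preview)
Your argument is correct and is essentially the same as the paper's: both build a nested chain of cubes between $\tQ$ and $Q$ with consecutive volumes comparable, bound each link $|f_{Q_j}-f_{Q_{j+1}}|$ by a constant times $\epsilon\,|Q_{j+1}|^{\al/n}$ via the hypothesis, and then sum the resulting geometric series (convergent precisely because $\al>0$). The only cosmetic differences are that the paper runs the chain top-down (from $Q$ to $\tQ$, halving the \emph{volume} at each step) whereas you run it bottom-up (doubling the \emph{side length}), and the paper does not dwell on the placement of the intermediate cubes inside $Q$; your discussion of anchoring at a corner is a fine way to handle that detail.
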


\begin{proof}
  Take $N:=\lfloor\log_2\frac{|Q|}{|\tQ|}\rfloor$. We can find a sequence of cubes $\{Q_i\}_{i=1}^N$ satisfying
\be
Q\supset Q_1\supset Q_2\supset \cdots \supset Q_N\supset \tQ,\ \  |Q_i|=(1/2)^i|Q|\ \ (i=1,2,\cdots,N)
\ee
and
\be
(1/2)^N|Q|\geq |\tQ|> (1/2)^{N+1}|Q|.
\ee
By the assumption of Proposition \ref{proposition, regularity} and the choice of $Q_1$,
\be
\begin{split}
  |f_Q-f_{Q_1}|\leq & \frac{1}{|Q_1|}\int_{Q_1}|f(y)-f_Q|dy
  \leq
  \frac{|Q|^{1+\frac{\al}{n}}}{|Q_1|}\frac{1}{|Q|^{1+\frac{\al}{n}}}\int_{Q}|f(y)-f_Q|dy
  \leq
  2|Q|^{\frac{\al}{n}}\ep.
\end{split}
\ee
A similar argument yields that
\be
|f_{Q_j}-f_{Q_{j+1}}|\leq 2|Q_j|^{\frac{\al}{n}}\ep\ \ \text{for}\ j=1,2,\cdots, N-1,
\ee
and
\be
|f_{Q_N}-f_{\tQ}|\leq 2|Q_N|^{\frac{\al}{n}}.
\ee
Hence,
\be
\begin{split}
  |f_Q-f_{\tQ}|
  \leq &
  |f_Q-f_{Q_1}|+\sum_{j=1}^{N-1}|f_{Q_j}-f_{Q_{j+1}}|+|f_{Q_N}-f_{\tQ}|
  \\
  \leq &
  \left(2\sum_{j=1}^N|Q_j|^{\frac{\al}{n}}+2|Q|^{\frac{\al}{n}}\right)\ep
  \leq C|Q|^{\frac{\al}{n}}\ep.
\end{split}
\ee

\end{proof}

\subsection{Proof of Theorem \ref{theorem, characterization of cmoa}}\label{s2.2}
The case of $\alpha=0$ is due to Uchiyama \cite{Uchiyama78TohokuMathJ}, see Theorem A
in Section \ref{s1}.

Now, we start our proof for $\al\in (0,1)$.
Obviously, the conditions (1), (2) and (3) in Definition \ref{def, cmoa} are valid if $f\in C_c^{\infty}(\mathbb R^n)$.
For a fixed function $f\in \tcmoa$ and every $\ep>0$, we take a function $g\in C_c^{\infty}(\mathbb R^n)$ such that
\be
\|f-g\|_{\bmoa}<\ep.
\ee
Then,
\be
\begin{split}
  \limsup\limits_Q \coa(f;Q)
  \leq &
  \limsup\limits_Q \coa(f-g;Q)+\limsup\limits_Q \coa(g;Q)
  \\
  \leq &
  \|f-g\|_{\bmoa}+\limsup\limits_Q \coa(g;Q)
  \leq \ep,
\end{split}
\ee
where the cubes $Q$ vary in the way of that mentioned in (1)-(3).
This completes the proof of $\tcmoa \subset \cmoa$.

For the proof of $\cmoa\subset \tcmoa$, roughly speaking,
the geometric part of the argument in \cite{Uchiyama78TohokuMathJ} still works in our case.
However, the analytical part of argument in \cite{Uchiyama78TohokuMathJ} does not work again.
More precisely, since a $\lipa$ function is continuous,
it is clear that a simple function such as in \cite{Uchiyama78TohokuMathJ} can not be used
as an approximation function in our case.

Now, we start our proof for $\cmoa\subset \tcmoa$.
Assume that $f$ satisfies conditions (1)-(3) of Theorem \ref{theorem, characterization of cmoa}.
We will show that
for a fixed small number $\epsilon>0$, there exists a function $g_\epsilon\in \lipa\cap Lip_1(\bbR^n)$ and $h_\epsilon\in  C_c^{\infty}(\bbR^n)$, such that
\ben\label{proof, 6}
\|f-g_{\ep}\|_{\lipa}
\leq C\ep
\een
and
\begin{align}\label{e-h-e}
\|g_{\ep}-h_\epsilon\|_{\lipa}
\leq C\ep,
\end{align}
where $C$ is independent of $f$, $g_\epsilon$ and $h_\epsilon$.

We prove \eqref{proof, 6} by the following two steps.

{\bf Step I} To define the function $g_\epsilon$, we first introduce a finite family $\cq$ of closed dyadic cubes and define the vertex mappings $\{\psi^Q\}$
for all $Q\in \cq$ and auxiliary functions $\{g_{\epsilon,m}\}$ as follows.
By conditions (1)-(3), there exist three integers
depending on $\ep$, denoted by $i_{\ep}$, $j_{\ep}$ and $k_{\ep}$ respectively, satisfying $i_{\ep}+3\leq k_{\ep}$,
\be
\sup\{\coa(f;Q): |Q|\leq 2^{(i_{\ep}+2)n}\}<\ep,\ \sup\{\coa(f;Q): |Q|\geq 2^{j_{\ep}n}\}<\ep,
\ee
and
\be
\sup\{\coa(f;Q): Q\cap R_{k_{\ep}}=\emptyset\}<\ep,
\ee
where for $i\in\mathbb Z$, $R_i:=[-2^{i},2^i]^n$.
Moreover, let
\be
d_1:=d_1(\epsilon):=k_{\ep}+1
\ee
and for any integer $m\ge d_1$,
$$P_m:=R_{i_\epsilon+m-d_1-1}=\left[-2^{i_{\ep}+m-d_1-1},2^{i_{\ep}+m-d_1-1}\right]^n.$$
Using condition (2), we can find a sufficient large integer $d_2\geq j_{\ep}$
such that for all cubes $Q$ such that $|Q|\geq |P_m|=2^{(i_\epsilon+m-d_1)n}$ 
with $m\geq d_2$,
\ben\label{proof, 1}
\coa(f;Q)\leq \left(\frac{|P_{m}|}{|R_{m}|}\right)^{\frac{1}{n}}\ep.
\een
Now we consider a finite family $\cq$ of closed dyadic cubes in $R_{d_2+1}$ as follows.
For $x\in R_{d_1}$, $Q_x$ means the closed dyadic cubes of side length $2^{i_{\ep}}$ that contain $x$.
If $x\in R_m\bs R_{m-1}$ for $m>d_1$, $Q_x$ means the closed dyadic cubes with side length $2^{i_{\ep}+m-d_1}$ that contain $x$.
Observe that for any $x\in\rn$, the number of cubes $Q_x$ is not more than $2^n$.
Take
$$\cq:=\{Q_x:\,\, Q_x\subset R_{{d_2+1}}\}, \quad\, \cq_{d_1}:=\{Q\in \cq:\,\, Q\subset R_{d_1}\}$$
and
$$\cq_m:=\{Q\in \cq:\,\, Q\subset R_{m}\bs \mathring{R_{m-1}}\},\,d_1+1\leq m\leq {d_2+1}.$$
Then by the definitions of $i_\epsilon$ and $k_\epsilon,$ we see that for any cube $\tQ\subset 4Q$ with
$Q\in \cq$,
\begin{align}\label{e-osci dya cub}
\mathcal O(f; \tQ)<\epsilon,
\end{align}
which via Proposition \ref{proposition, regularity} further implies that for any $Q\in \cq_m$ and $a\in (V_Q\setminus \partial R_m)$,
\begin{align}\label{e-mean value regu Qm}
\left|f_{4Q}-f_{Q}\right|,
\left|f_{4Q}-f_{P_m+a}\right|\ls\mathcal O(f; 4Q)|Q|^{\frac\alpha n}\ls\epsilon |Q|^{\frac\alpha n}.
\end{align}

Next, we add weights on the vertexes of $Q_x\in \cq$. For $Q\in \cq_{{d_2+1}}$ and $a\in V_Q$, set
\begin{align}\label{e-psi defn outer}
\psi^Q(a):=A_{d_2}:=\frac{\sum_{b\in \bigcup\limits_{Q\in \cq_{d_2}}V_Q}f_{P_{d_2}+b}}{|\bigcup\limits_{Q\in \cq_{d_2}}V_Q|},
\end{align}
%
where $|\bigcup\limits_{Q\in \cq_{d_2}}V_Q|$ is the cardinality of $\bigcup\limits_{Q\in \cq_{d_2}}V_Q$.

For $y\in\rn$, define
\be
g_{\ep,{d_2+1}}(y):=
\begin{cases}
  \cfq(y), &\exists Q\in \cq_{{d_2+1}} \,\mbox{such\,that}\, Q\ni y,
  \\
  0, &\mbox{others,}
\end{cases}
\ee
where $\mathcal F_Q$ is the function associated with $\psi^Q$ as in Proposition \ref{proposition, Lip on Q}.
For any $y\in R_{{d_2+1}}\setminus \mathring{R_{{d_2}}}$, if there exist $Q,\, \widetilde Q\in \cq_{{d_2+1}}$ such that $y\in Q\cap \widetilde Q$, then \eqref{e-psi defn outer} and property (2) in Proposition \ref{proposition, Lip on Q} imply that
$\mathcal F_{Q}(y)=\mathcal F_{\widetilde Q}(y)$. Thus, the function $g_{\ep,{d_2+1}}$ is well-defined.

Next, for $m:=d_2, d_2-1, d_2-2,\cdots,d_1$ and $Q\in \cq_{m}$, we set
\begin{align}\label{e-psi defn inn}
\psi^Q(a):=
\begin{cases}
  g_{\ep,m+1}(a), &a\in V_Q\cap \partial R_{m},
  \\
  f_{P_m+a}, &a\in V_Q\bs \partial R_{m},
\end{cases}
\end{align}
and
\begin{align}\label{e-g-m defn inn}
g_{\ep,m}(y):=
\begin{cases}
  \cfq(y), &\exists Q\in \cq_{m} \,\mbox{such\,that}\, Q\ni y,
  \\
  0, &\mbox{others.}
\end{cases}
\end{align}
By the definition of $g_{\ep,m}$, we further set
\begin{align}\label{e-g defn}
g_{\ep}(y):=
\begin{cases}
  g_{\ep,m}(y), &\exists Q\in \cq_{m}, d_1\leq m\leq {d_2+1}, \,\mbox{such\,that}\, Q\ni y,
  \\
  A_{d_2}, &y\notin R_{{d_2+1}}.
\end{cases}
\end{align}
Thanks to the property (2) in Proposition \ref{proposition, Lip on Q},
the functions $g_{\ep,m} (d_1\leq m\leq d_2+1)$ and $g_{\ep}$ are well-defined.
Moreover, to show $g_\epsilon\in\lipa$, we first claim that for all $Q\in \cq$,
\ben\label{proof, 3}
\scrm_Q\ls|Q|^{\frac{\al}{n}}\ep,
\een
where $\scrm_Q$ is as in \eqref{e-osci weigh cube defn}.

In fact, for any $Q\in \cq_{d_2+1}$, by \eqref{e-osci weigh cube defn} and \eqref{e-psi defn outer}, it is trivial that
\begin{align}\label{e-osci weigh est o}
\scrm_Q\le \sum_{a\in V_{Q}}\left|\psi^{Q}(a)-A_{d_2}\right|=0.
\end{align}
On the other hand, for any $Q\in \cq_{d_2}$ and $a, b\in V_Q$,
by \eqref{proof, 1}, \eqref{e-osci dya cub} and Proposition \ref{proposition, regularity}, we see that
\begin{align}\label{e-regu mean value}
\left|f_{P_{d_2}+b}-f_{P_{d_2}+a}\right|&\ls \mathcal O_\alpha(f; 4Q))|Q|^{\frac\alpha n}\ls  \left(\frac{|Q|}{|R_{d_2}|}\right)^{\frac{1}{n}}\ep |Q|^{\frac\alpha n}.
\end{align}
Now for any $Q\in\cq_{d_2}$,
 from \eqref{e-regu mean value}
, it follows that
for all $a\in V_Q$,
\begin{align}\label{e-mean value regu}
\left|f_{P_{d_2}+a}-A_{d_2}\right|
\ls &
\left(\frac{|Q|}{|R_{d_2}|}\right)^{\frac{1}{n}}\ep |Q|^{\frac\alpha n}
\left(\frac{|R_{d_2}|}{|Q|}\right)^{\frac{1}{n}}
\sim
|Q|^{\frac\alpha n}\ep.
\end{align}
This via \eqref{e-psi defn inn} and \eqref{e-g-m defn inn} implies that
\begin{align}\label{e-osci weigh est-m}
\scrm_Q&\le \sum_{a\in (V_{Q}\cap \partial R_{d_2})}\left|\psi^{Q}(a)-A_{d_2}\right|+\sum_{a\in (V_{Q}\setminus \partial R_{d_2})}\left|\psi^{Q}(a)-A_{d_2}\right|\nonumber\\
&=\sum_{a\in (V_{Q}\setminus \partial R_{d_2})}\left|f_{P_{d_2}+a}-A_{d_2}\right|\ls |Q|^{\frac{\al}{n}}\ep.
\end{align}

Finally, for any $Q\in \bigcup_{m=d_1}^{d_2-1}\cq_m$,
from \eqref{e-psi defn inn}, \eqref{e-g-m defn inn}, \eqref{e-osci dya cub}, \eqref{e-mean value regu Qm}, Proposition \ref{proposition, regularity}
and the fact $4Q\subset 4\tQ$,
we deduce that
\begin{align}\label{e-osci weigh est-i}
\scrm_Q&\leq
\sum_{a\in (V_Q\cap \partial R_m)}\left|\psi^Q(a)-f_{4Q}\right|
+\sum_{a\in (V_Q\setminus\partial R_m)}\left|\psi^Q(a)-f_{4Q}\right|\nonumber\\
&\leq\sum_{a\in (V_Q\cap \partial R_m)}\left(\left|\mathcal F_{\widetilde Q}(a)-f_{4\tQ}\right|+\left|f_{4\tQ}-f_{4Q}\right|\right)
+\sum_{a\in (V_Q\setminus \partial R_m)}\left|f_{P_{m}+a}-f_{4Q}\right|
\nonumber\\
&\ls\sum_{a\in (V_Q\cap \partial R_m)}\left(\sum_{b\in V_{\widetilde Q}}\left|f_{P_{m+1}+b}-f_{4\tQ}\right|+\left|f_{4\tQ}-f_{4Q}\right|\right)
+\sum_{a\in (V_Q\setminus \partial R_m)}\left|f_{P_{m}+a}-f_{4Q}\right|\nonumber\\
&\ls \sum_{a\in (V_Q\cap \partial R_m)}\ep|4\tQ|^{\frac\alpha n}
+\sum_{a\in (V_Q\setminus \partial R_m)}\ep|4Q|^{\frac\alpha n}
\ls |Q|^{\frac\alpha n}\epsilon,
\end{align}
where in the equality, $\widetilde Q\in \cq_{m+1}$ such that $\widetilde Q\ni a$.
Hence, the claim \eqref{proof, 3} follows from \eqref{e-osci weigh est o}, \eqref{e-osci weigh est-m} and \eqref{e-osci weigh est-i}.
 Furthermore, by the claim \eqref{proof, 3} and Proposition \ref{proposition, Lip on Q} (3), for $j=1,2,\cdots, n$, $Q\in \cq$ and $y\in Q,$
\ben\label{proof, 2}
\left|\frac{\partial\cf_{Q}(y)}{\partial y_j}\right|\leq |Q|^{-1/n}\scrm_{Q}\ls|Q|^{\frac{\al-1}{n}}\ep.
\een

Now we show that $g_{\ep}\in Lip_1(\bbR^n)$.
For any two points $x_0,y_0\in \bbR^n$, one can choose finite points $x_1, x_2,\cdots, x_N$ where $N$ is independent of $x_0,y_0$, such that
$x_N:=y_0$, $|x_0-y_0|=\sum_{j=0}^{N-1}|x_j-x_{j+1}|$,
 and for any fixed $j=0,1,\cdots, N-1$, each pair of points $x_j,x_{j+1}$ both belong to a cube $Q\in \cq$ or $\mathring{R_{{d_2+1}}}$.
 Since by \eqref{proof, 2}, $g_{\ep}$ is a Lipschitz function on every cubes $Q\in \cq$, we obtain
\begin{align}\label{e-g lip pro}
|g_{\ep}(x_0)-g_{\ep}(y_0)|
\leq &
\sum_{j=0}^{N-1}\left|g_{\ep}(x_j)-g_{\ep}(x_{j+1})\right|
\leq
\sup_{Q\in \cq}\sup_{x\in Q}|\nabla\cfq(x)| \sum_{j=0}^{N-1}|x_j-x_{j+1}|\nonumber\\
=&\sup_{Q\in \cq}\sup_{x\in Q}|\nabla\cfq(x)|\cdot |x_0-y_0|.
\end{align}
We see that $g_{\ep}$ is a bounded function in $Lip_1(\bbR^n)$. This implies that $g_{\ep}\in \lipa$ for all $\al\in (0,1)$.


{\bf Step II} We now show \eqref{proof, 6}. For any cube $Q$, define
\be
\tcoa(f;Q):=\inf_{c\in \mathbb{C}}\frac{1}{|Q|^{1+\frac{\al}{n}}}\int_Q |f(x)-c|dx.
\ee
It is easy to see that
\be
\tcoa(f;Q) \leq \coa(f; Q) \leq 2\tcoa(f; Q).
\ee
Then to show \eqref{proof, 6}, it suffices to show that
\begin{align}\label{e-osci upp bdd}
\tcoa(f-g_{\ep};Q)\ls\ep
\end{align}
for all cubes $Q$ on $\bbR^n$.
This part is divided into following three cases.

\textbf{Case 1.} $Q\subset R_{{d_2+1}}$. Let $\mathcal D_Q:=\{Q_x: Q_x\cap Q\neq \emptyset\}.$ We further consider the following two cases.

Subcase (i)
$ \max\{l_{Q_x}: Q_x\in \mathcal D_Q\}\geq 2l_Q.$
In this case, the number of cubes $Q_x\in\mathcal D_Q$ is not more than $2^n$ and $l_{Q_x}\ge l_Q$ for any $Q_x\in \mathcal D_Q$.
If $Q\cap R_{d_1}\neq \emptyset$, then $|Q|\leq 2^{ni_\ep}$.
From this and the definition of $d_1$, we obtain $\coa(f;Q)<\ep$.
By \eqref{proof, 2}, we deduce that
\begin{align*}
\frac{1}{|Q|^{1+\frac{\al}{n}}}\int_Q |g_{\ep}(c_Q)-g_{\ep}(y)|dy
\ls &\frac{1 }{|Q|^{1+\frac{\al}{n}}}
\sum_{Q_x\in \mathcal D_Q}|Q_x|^{\frac{\al-1}{n}}\ep\cdot\int_{Q\cap Q_x}|y-c_Q|dy
\\
\ls &\frac{1}{|Q|^{1+\frac{\al}{n}}}
\sum_{Q_x\in \mathcal D_Q}|Q_x|^{\frac{\al-1}{n}}\ep\cdot  |Q|^{1+\frac{1}{n}}
\\
\sim &\ep
\sum_{Q_x\in \mathcal D_Q}\left(\frac{|Q|}{|Q_x|}\right)^{\frac{1-\al}{n}}\ls\ep.
\end{align*}

Thus,
\be
\tcoa(f-g_{\ep};Q)\leq \coa(f;Q)+\frac{1}{|Q|^{1+\frac{\al}{n}}}\int_Q |g_{\ep}(c_Q)-g_{\ep}(y)|dy\ls\ep.
\ee
\\
If $Q\cap R_{d_1}= \emptyset$, by the definition of $d_1$ we have $\coa(f;Q)<\ep$.
By a similar argument, we also have
\be
\tcoa(f-g_{\ep};Q)\ls\ep.
\ee

Subcase (ii)  $\max\{l_{Q_x}: Q_x\in \mathcal D_Q\}< 2 l_Q.$ In this subcase, we first show that
for any $Q_x\in\mathcal D_Q$ and $y\in Q_x$,
\begin{align}\label{e-mean val minu FQ}
|f_{Q_x}-\cf_{Q_x}(y)|\ls|Q_x|^{\frac{\al}{n}}\ep.
\end{align}
In fact, for any $Q_x\subset R_{d_2}$ and $y\in Q_x$, we take $a\in (v_{Q_x}\bs \partial R_{d_2})$ and write
\be
\begin{split}
  |f_{Q_x}-\cf_{Q_x}(y)|
  \leq &
  |f_{Q_x}-f_{4Q_x}|+|f_{4Q_x}-\cf_{Q_x}(a)|+|\cf_{Q_x}(a)-\cf_{Q_x}(y)|
  \\
  = &
  |f_{Q_x}-f_{4Q_x}|+|f_{4Q_x}-f_{P_{d_2}+a}|+|\cf_{Q_x}(a)-\cf_{Q_x}(y)|
  \\
  \ls &
  |Q_x|^{\frac{\al}{n}}\ep+|a-y|\cdot |Q_x|^{\frac{\al-1}{n}}\ep
  \ls
  |Q_x|^{\frac{\al}{n}}\ep,
\end{split}
\ee
where we use \eqref{e-mean value regu Qm}, \eqref{proof, 2} and the fact
$\cf_{Q_x}(a)=\psi^{Q_x}(a)=f_{P_{d_2}+a}$.
%


%
For any $Q_x\in \cq_{d_2+1}$ and $y\in Q_x$,  arguing as \eqref{e-mean value regu}, we also have
\begin{align}\label{proof, 4}
|f_{Q_x}-\cf_{Q_x}(y)|
=|f_{Q_x}-A_{d_2}|
\ls|Q_x|^{\frac{\al}{n}}\ep.
\end{align}
Thus, \eqref{e-mean val minu FQ} holds.

By \eqref{e-g defn}, \eqref{e-g-m defn inn}, \eqref{e-osci dya cub} and \eqref{e-mean val minu FQ}, we further write
\begin{align*}
  \int_Q|f(y)-g_{\ep}(y)|dy
  \leq &
\sum_{Q_x\in \mathcal D_Q}\left(\int_{Q_x}|f(y)-f_{Q_x}|dy+\int_{Q_x}|f_{Q_x}-\cf_{Q_x}(y)|dy\right)
 \\
  \leq &
  \sum_{Q_x\in \mathcal D_Q}\left(|Q_x|^{1+\frac{\al}{n}}\ep+\int_{Q_x}|f_{Q_x}-\cf_{Q_x}(y)|dy\right)\\
%
  \ls &
  \sum_{Q_x\in \mathcal D_Q}|Q_x|^{1+\frac{\al}{n}}\ep
  \ls\left(\sum_{Q_x\in \mathcal D_Q}|Q_x|\right)^{1+\frac{\al}{n}}\ep
  \ls|Q|^{1+\frac{\al}{n}}\ep.
\end{align*}
This implies that
\be
\tcoa(f-g_{\ep};Q)\leq \frac{1}{|Q|^{1+\frac{\al}{n}}}\int_Q|f(y)-g_{\ep}(y)|dy\ls\ep.
\ee

\textbf{Case 2.} $Q\subset R_{d_2}^c$.
Note that $g_{\ep}(y)=A_{d_2}$ for any $y\in R_{d_2}^c$. Then
\be
\tcoa(f-g_{\ep};Q)\leq \frac{1}{|Q|^{1+\frac{\al}{n}}}\int_Q|f(y)-f_{Q}|dy\ls\ep.
\ee

\textbf{Case 3.} $Q\cap R_{d_2}\neq \emptyset$, $Q\cap R_{{d_2+1}}^c\neq \emptyset$. Note that $l_Q\ge \frac12l_{R_{d_2}}$.
By this, \eqref{proof, 1} and the definition of $g_\epsilon$, we write
\begin{align*}
&\int_{Q}|f(y)-g_{\ep}(y)-f_Q+A_{d_2}|dy
\\
&\quad\leq
\int_{Q\bs R_{d_2}}|f(y)-g_{\ep}(y)-f_Q+A_{d_2}|dy+\int_{Q\cap R_{d_2}}|f(y)-g_{\ep}(y)-f_Q+A_{d_2}|dy
\\
&\quad\leq
\int_{Q}|f(y)-f_Q|dy+\int_{Q\cap R_{d_2}}|f(y)-g_{\ep}(y)-f_Q+A_{d_2}|dy
\\
&\quad\leq
|Q|^{1+\frac{\al}{n}}\ep+\int_{Q\cap R_{d_2}}|f(y)-g_{\ep}(y)-f_Q+A_{d_2}|dy.
\end{align*}

Take a cube $Q_{x_0}\subset (R_{{d_2+1}}\bs \mathring{R_{d_2}})\cap Q$.
By Proposition \ref{proposition, regularity}, \eqref{e-osci dya cub} and \eqref{e-mean value regu}, we obtain
\be
\begin{split}
  |f_Q-A_{d_2}|
  \leq &
  |f_Q-f_{Q_{x_0}}|+|f_{Q_{x_0}}-A_{d_2}|
  \ls|Q|^{\frac{\al}{n}}\ep+C|Q_{x_0}|^{\frac{\al}{n}}\ep
  \ls|Q|^{\frac{\al}{n}}\ep.
\end{split}
\ee
This implies that
\be
\begin{split}
  \int_{Q\cap R_{d_2}}|f_Q-A_{d_2}|dy
  \ls &
  |Q|^{1+\frac{\al}{n}}\ep.
\end{split}
\ee
On the other hand, let $\mathcal D_Q$ be as in Case 1. A similar argument as in Case 1 yields that
\begin{align*}
  \int_{Q\cap R_{d_2}}|f(y)-g_{\ep}(y)|dy
  \leq &
  \sum_{Q_x\in \mathcal D_Q}\int_{Q_x}|f(y)-g_{\ep}(y)|dy
  \ls \sum_{Q_x\in \mathcal D_Q}|Q_x|^{1+\frac{\al}{n}}\ep
  \ls |Q|^{1+\frac{\al}{n}}\ep.
\end{align*}
Combining the above two estimates yields that
\be
\begin{split}
&\int_{Q}|f(y)-g_{\ep}(y)-f_Q+A_{d_2}|dy
\\
&\quad\leq
|Q|^{1+\frac{\al}{n}}\ep+\int_{Q\cap R_{d_2}}|f(y)-g_{\ep}(y)-f_Q+A_{d_2}|dy
\ls|Q|^{1+\frac{\al}{n}}\ep.
\end{split}
\ee
This implies that
\be
\tcoa(f-g_{\ep};Q)
\leq
\frac{1}{|Q|^{1+\frac{\al}{n}}}\int_{Q}|f(y)-g_{\ep}(y)-f_Q+A_{d_2}|dy
\ls\ep.
\ee
Thus, \eqref{e-osci upp bdd} holds.

Now we show \eqref{e-h-e}.
Take a positive $C_c^{\infty}(\mathbb{R}^n)$ function $\va$ supported on $B(0,1)$, satisfying $\|\va\|_{L^1(\rn)}=1$.
Set $\va_{t}(x):=\frac{1}{t^n}\va(\frac{x}{t})$, $t\in (0,\infty)$.
Recall that $g_{\ep}\in Lip_1(\rn)$. Take sufficiently small $r$ such that
\be
2\|g_{\ep}\|_{Lip_1(\rn)}r^{1-\al}<\ep.
\ee
Thus, for $|z|<r$,
\begin{align}\label{proof, 5}
&|(g_{\ep}\ast \va_t)(x+z)-g_{\ep}(x+z)-((g_{\ep}\ast \va_t)(x)-g_{\ep}(x))|\nonumber
\\
&\quad=
\left|\int_{B(0,1)}(g_{\ep}(x+z-ty)-g_{\ep}(x-ty))\va(y)dy\right|+|g_{\ep}(x+z)-g_{\ep}(x)|\nonumber
\\
 &\quad\leq
\|g_{\ep}\|_{Lip_1(\rn)}\cdot \left(\int_{B(0,1)}|z|\va(y)dy+|z|\right)\nonumber
\\
 &\quad=
2\|g_{\ep}\|_{Lip_1(\rn)}|z|=2\|g_{\ep}\|_{Lip_1(\rn)}|z|^{1-\al}|z|^{\al}<\ep|z|^{\al}.
\end{align}
On the other hand, $g_{\ep}$ is uniformly continuous, so
\be
(g_{\ep}\ast \va_t)(x)\longrightarrow g_{\ep}(x)\ \ \ \text{uniformly for all}\ x\in \rn.
\ee
Thus, one can choose sufficiently small $t$ such that
\be
\begin{split}
|(g_{\ep}\ast \va_t)(x+z)-g_{\ep}(x+z)-((g_{\ep}\ast \va_t)(x)-g_{\ep}(x))|<\ep r^{\al}
\end{split}
\ee
uniformly for all $x, z\in \rn$.
From this, for $|z|\geq r$,
\be
\frac{|(g_{\ep}\ast \va_t)(x+z)-g_{\ep}(x+z)-((g_{\ep}\ast \va_t)(x)-g_{\ep}(x))|}{|z|^{\al}}<\ep.
\ee
Combining this and (\ref{proof, 5}), we actually get
\be
\frac{|(g_{\ep}\ast \va_t)(x+z)-g_{\ep}(x+z)-((g_{\ep}\ast \va_t)(x)-g_{\ep}(x))|}{|z|^{\al}}<\ep.
\ee
for all $|z|\neq 0$.
This implies that
\be
\|g_{\ep}\ast \va_t-g_{\ep}\|_{\lipa}<\ep.
\ee
Note that $h_{\ep}:=g_{\ep}\ast \va_t-A_{d_2}\in C_c^{\infty}(\bbR^n)$, and
$\|h_{\ep}-g_{\ep}\|_{\lipa}<\ep$.
We have now completed the proof of \eqref{e-h-e}, which together with \eqref{proof, 6} implies $\cmoa\subset \tcmoa$ for $\alpha\in (0,1)$.

Next, we deal with the case $\al:=1$.
By condition (1) of Definition \ref{def, cmoa}, for any fixed $\ep>0$, there is a constant $a>0$ such that for any $Q$ with $l_Q<2a$,
\be
\co_1(f;Q)<\ep.
\ee

For a fixed point $x$ and a variable point $y$ with $|x-y|<a$, one can choose a suitable cube $Q$ with side length $l_Q\leq 2|x-y|$, such that
\be
Q_{x,y,b}:=x+\frac{bl_Q}{10}Q_0\subset Q,\ \ \ Q_{y,x,b}:=y+\frac{bl_Q}{10}Q_0\subset Q,\ \  b\in (0,1].
\ee
It follows from Proposition \ref{proposition, regularity}  and $\alpha=1$ that
\be
|f_{Q_{x,y,b}}-f_{Q}|\lesssim |Q|^{\frac{1}{n}}\ep\lesssim |x-y|\ep,\ \ \
|f_{Q_{y,x,b}}-f_{Q}|\lesssim |Q|^{\frac{1}{n}}\ep\lesssim |x-y|\ep.
\ee
Letting $b\rightarrow 0$, we obtain
\be
|f(x)-f_{Q}|\lesssim |x-y|\ep,\ \ \ |f(y)-f_{Q}|\lesssim  |x-y|\ep.
\ee
This implies that
\be
|f(x)-f(y)|\lesssim |x-y|\ep.
\ee
Hence,
\be
\ limsup_{y\rightarrow x}\frac{|f(x)-f(y)|}{|x-y|}\lesssim \ep.
\ee
By the arbitrariness of $\ep$, we actually get
\be
\frac{\partial f(x)}{\partial x_j}\equiv 0\ \  \text{for all}\ x\in \rn, j=1,2,\cdots,n.
\ee
This completes the proof for $\al=1$.

\section{Necessity of compactness of commutators}\label{s5}

In this section, in order to deal with the necessity of compact commutators,
the lower bound in Proposition \ref{proposition, lower estimates} is not enough.
So, based on Proposition \ref{proposition, lower estimates},
we establish a further lower bounded estimate providing that $b\in \lipa$.
Next, for $b\in\lipa$, $\Om\in L^{\infty}(\mathbb S^{n-1})$ and any cube $Q$,
we also obtain an upper bound of the weighted $L^q$
norm of $(T_{\Omega,\,\al})_b^m(f)$ over the annulus $2^{d+1}Q\bs 2^dQ$.
Using Theorem \ref{theorem, characterization of cmoa}, the upper and further lower bounds,
and a reduction of $\Om$,
we further present the proof of $(1)\Longrightarrow (2)$ part in Theorem \ref{theorem, characterization of compactness} via a contradiction argument in Subsection \ref{s3.4}.

\subsection{Further lower estimates}\label{s3.2}

In this subsection, we further establish the lower bound fitting for compactness.
Here, although the local mean oscillation is lost,
the continuity of $b\in \lipa$ provides enough information for the distribution of the values of $b$.
This observation makes it possible for us to get the lower bound
for the $L^q$ norm over certain measurable set associated with $Q$.

\begin{proposition}\label{proposition, lower estimates compactness}
Let $\eta_0>0$, $1<p,q<\infty$, $0< \al\leq 1$, $0\leq \b<n$, $m\al+\b<n$, $1/q=1/p-(m\al+\b)/n$ and $m\in \mathbb{Z}^+$.
  Let $w\in \apq$, $b\in \lipa$ be a real-valued function and $\Om$ be a measurable function on $\bbS^{n-1}$, which does not change sign and is not equivalent to zero
  on some open subset of  $\bbS^{n-1}$.
 For any given cube $Q$ with $\tcoa(b;Q)\geq \eta_0$,
 let $\widetilde{P}:=2P$ be the set associated with $\widetilde{Q}:=2Q$ and $\g:=\frac{1}{2^{n+1}}\left(\min\left\{\left(\frac{\eta_0}{4\|b\|_{\lipa}}\right)^{1/\al} \frac{1}{\sqrt{n}},\frac{1}{2}\right\}\right)^n$
 as mentioned in Proposition \ref{proposition, key lower estimates}.
 There are cubes $E\subset 2Q$ and $F\subset 2P$ with $|E|=|F|\geq \widetilde{C} \min\left\{\left(\coa(b;Q)\right)^{n/\al},1\right\}|Q|$,
 where the constant $\widetilde{C}$ is independent of $Q$.
For $f:=(\int_{F}\om(x)^pdx)^{-1/p}\chi_F$
and any measurable set $B$ with $|B|\leq \frac{|E|}{2}$, we have
  \be
  \|(T_{\Omega,\,\b})_b^m(f)\|_{L^q(E\bs B,\om^q)}\geq C\min\left\{\left(\coa(b;Q)\right)^{2n/\al},1\right\}\coa(b;Q)^m,
  \ee
  where the constant $C$ is independent of $Q$.
\end{proposition}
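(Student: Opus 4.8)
The plan is to upgrade the averaged lower bound of Proposition \ref{proposition, lower estimates} to a \emph{pointwise} one on a carefully chosen subcube of $2Q$; this is possible because every $b\in\lipa$ is (uniformly) continuous. First I would apply Proposition \ref{proposition, key lower estimates} to the cube $\widetilde Q:=2Q$ with the prescribed $\g$, obtaining $\widetilde P:=2P$ (of side length $2l_Q$, with $|c_{2Q}-c_{2P}|=2k_0l_Q$), a median $m_b(\widetilde P)$, a constant $\ep_0>0$, and the facts that $\Om(x-y)$ keeps a fixed sign for $(x,y)\in 2Q\times 2P$ and that $|N_x\cap 2P|\le\g|2Q|$ for every $x\in 2Q$, where $N_x:=\{y:|\Om(x-y)|<\ep_0\}$.

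The core step is the construction of the cubes $E\subset 2Q$ and $F\subset 2P$, and it is here that continuity of $b$ takes over the role played by local mean oscillation in \cite{LernerOmbrosiRivera17arxiv,GuoWuYang17Arxiv}. Since $b_Q$ is only the minimizer of $c\mapsto\int_Q|b-c|^2$, I would use the elementary inequality $\int_Q|b-c|\,dx\ge\tfrac12\int_Q|b-b_Q|\,dx$, valid for every constant $c$, with $c=m_b(\widetilde P)$; this gives $\frac1{|Q|}\int_Q|b(x)-m_b(\widetilde P)|\,dx\ge\tfrac12\coa(b;Q)l_Q^\al$, so some $x_*\in Q$ satisfies $\delta:=|b(x_*)-m_b(\widetilde P)|\ge\tfrac12\coa(b;Q)l_Q^\al>0$; assume $b(x_*)>m_b(\widetilde P)$, the other case being symmetric. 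On the source side, choose $y_0$ a Lebesgue density point of the closed set $\{y\in 2P:b(y)\le m_b(\widetilde P)\}$, which has measure $\ge\tfrac12|2P|$ by the definition of the median; an elementary count against the boundary layer of $2P$ shows $y_0$ may be taken with enough room inside $2P$. Let $v$ be a common volume, comparable to $\min\{(\coa(b;Q)/\|b\|_{\lipa})^{1/\al},1\}^n|Q|$, and let $E,F$ be the cubes of volume $v$ centered at $x_*,y_0$ respectively. If $v$ is small enough that the circumradii of $E$ and $F$ are at most $(\delta/(4\|b\|_{\lipa}))^{1/\al}$ (and $E\subset 2Q$, $F\subset 2P$), then the Lipschitz bound $|b(x)-b(z)|\le\|b\|_{\lipa}|x-z|^\al$ yields $b\ge m_b(\widetilde P)+\delta/2$ on $E$ and $b\le m_b(\widetilde P)+\delta/4$ on $F$; consequently $|E|=|F|\ge\widetilde C\min\{\coa(b;Q)^{n/\al},1\}|Q|$, and for all $(x,y)\in E\times F$ one has $b(x)-b(y)\ge\delta/4>0$ and $|b(x)-b(y)|\ge\delta/4\ge\tfrac18\coa(b;Q)l_Q^\al$. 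The delicate point is that all these requirements must hold \emph{at once}, in particular together with $\g|2Q|\le\tfrac12|F|$ (which is what makes $|F\setminus N_x|\ge\tfrac12|F|$ below); this is exactly why $\g$ is given its particular shape, and checking it amounts to an elementary but somewhat tedious juggling of dimensional and $(\eta_0,\|b\|_{\lipa})$-dependent constants. I expect this bookkeeping, rather than any conceptual difficulty, to be the main obstacle.

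Finally, with $f:=(\om^p(F))^{-1/p}\chi_F$ and $x\in E$, writing
\[
(T_{\Om,\b})_b^m f(x)=(\om^p(F))^{-1/p}\int_F\frac{\Om(x-y)}{|x-y|^{n-\b}}\,(b(x)-b(y))^m\,dy ,
\]
I note that on $E\times F$ both $\Om(x-y)$ and $(b(x)-b(y))^m$ keep a fixed sign, so the integral has no cancellation; with $|x-y|\lesssim k_0l_Q$, $|\Om(x-y)|\ge\ep_0$ off $N_x$, $|F\setminus N_x|\ge|F|-\g|2Q|\ge\tfrac12|E|$, and $|b(x)-b(y)|\gtrsim\coa(b;Q)l_Q^\al$, this gives the uniform pointwise estimate $|(T_{\Om,\b})_b^m f(x)|\gtrsim(\om^p(F))^{-1/p}l_Q^{\b-n}(\coa(b;Q)l_Q^\al)^m|E|$ on $E$. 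Since $|B|\le\tfrac12|E|$, integrating over $E\setminus B$ yields
\[
\int_{E\setminus B}|(T_{\Om,\b})_b^m f|\,dx\gtrsim(\om^p(F))^{-1/p}\,l_Q^{\b-n}\,(\coa(b;Q)l_Q^\al)^m\,|E|^2 ,
\]
and the extra factor $|E|$ promotes $\min\{\coa(b;Q)^{n/\al},1\}$ to $\min\{\coa(b;Q)^{2n/\al},1\}$. From here I would follow the proof of Proposition \ref{proposition, lower estimates} line by line: by H\"older's inequality $\|(T_{\Om,\b})_b^m f\|_{L^q(E\setminus B,\om^q)}\ge\big(\int_{E\setminus B}|(T_{\Om,\b})_b^m f|\,dx\big)\big(\int_{2Q}\om^{-q'}\big)^{-1/q'}$; then, using $l_Q^{n+m\al+\b}=|Q|^{1/p+1/q'}$, the $A_p$-doubling of $\om^p$ and the $A_{q'}$-doubling of $\om^{-q'}$ (Lemmas \ref{l-Ap weight prop} and \ref{l-Apq weigh prop}) to pass from $F$ and $2Q$ to $Q$, and finally Jensen's inequality together with $[\om]_{A_{p,q}}<\infty$ to obtain $\big(\tfrac1{|Q|}\int_Q\om^p\big)^{1/p}\big(\tfrac1{|Q|}\int_Q\om^{-q'}\big)^{1/q'}\lesssim1$, one arrives at $\|(T_{\Om,\b})_b^m f\|_{L^q(E\setminus B,\om^q)}\gtrsim\min\{\coa(b;Q)^{2n/\al},1\}\,\coa(b;Q)^m$, as claimed.
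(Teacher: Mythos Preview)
Your proposal is correct and the overall strategy --- use continuity of $b$ to find small cubes $E\subset 2Q$, $F\subset 2P$ on which $b(x)-b(y)$ has a definite size and a fixed sign, then convert the resulting pointwise/$L^1$ lower bound to $L^q$ via H\"older and the $A_{p,q}$ condition exactly as in Proposition~\ref{proposition, lower estimates} --- matches the paper's.

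The difference lies in how the centers are located. You carry the median $m_b(\widetilde P)$ over from Proposition~\ref{proposition, key lower estimates} and pick $y_0$ as a Lebesgue density point of $\{y\in 2P:b(y)\le m_b(\widetilde P)\}$, which then forces a boundary-layer argument to ensure $F\subset 2P$. The paper drops the median entirely and uses continuity more directly via the intermediate value theorem: since $b$ is continuous on the connected set $P$, there is $y_0\in P$ with $b(y_0)=b_P$; since $x\mapsto|b(x)-b_P|$ is continuous on $Q$, there is $x_0\in Q$ with $|b(x_0)-b_P|=\frac{1}{|Q|}\int_Q|b-b_P|\,dx\ge|Q|^{\al/n}\tcoa(b;Q)$, giving $|b(x_0)-b(y_0)|\ge|Q|^{\al/n}\tcoa(b;Q)$ in one line. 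Because $y_0\in P$ (not merely $2P$), the inclusion $F:=y_0+L_QQ_0\subset 2P$ is automatic from $L_Q\le l_Q/2$, and the sign of $b(x)-b(y)$ on $E\times F$ is fixed simply because it is a continuous nowhere-zero function on a connected set. Your density-point route is not wrong, but the median is a remnant of the merely-measurable-$b$ setting of Proposition~\ref{proposition, key lower estimates} and is superfluous once $b\in\lipa$.
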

\begin{proof}
Assume $\tcoa(b;Q)\geq \eta_0$.
  By the continuity of $b$, there exist $x_0\in Q$ and $y_0\in P$, such that
\be
|b(x_0)-b(y_0)|=\frac{1}{|Q|}\int_Q|b(x)-b_P|dx\geq |Q|^{\frac{\al}{n}}\tcoa(b;Q).
\ee
Set
\be
L_Q:=\min\left\{\left(\frac{\tcoa(b;Q)}{4\|b\|_{\lipa}}\right)^{1/\al} \frac{l_Q}{\sqrt{n}},\frac{l_Q}{2}\right\},
\ee
and
\be
E:=x_0+L_QQ_0,\ \ \ F:=y_0+L_QQ_0.
\ee
We have
\be
E\subset 2Q,\ \ \ F\subset 2P.
\ee
For any $x\in E$, $y\in F$,
\be
\begin{split}
|b(x)-b(y)|&\geq |b(x_0)-b(y_0)|-|b(x)-b(x_0)|-|b(y)-b(y_0)|
  \\
  &\geq
  |Q|^{\frac{\al}{n}}\tcoa(b;Q)-|x-x_0|^{\al}\|b\|_{\lipa}-|y-y_0|^{\al}\|b\|_{\lipa}
  \\
  &\geq
  |Q|^{\frac{\al}{n}}\tcoa(b;Q)-\frac{\tcoa(b;Q)}{4\|b\|_{\lipa}}l_Q^{\al}\|b\|_{\lipa}-\frac{\tcoa(b;Q)}{4\|b\|_{\lipa}}l_Q^{\al}\|b\|_{\lipa}
  \\
  &\geq
  \frac{|Q|^{\frac{\al}{n}}\tcoa(b;Q)}{2}.
\end{split}
\ee
Again, by the continuity of $b$, $b(x)-b(y)$ does not change sign in $E\times F$.

On the other hand, by the above estimate of $b$ and the fact for $x\in 2Q$,
\be
\begin{split}
|N_x\cap 2P|\leq \g|2P|
\leq \frac{1}{2}\left(\min\left\{\left(\frac{\eta_0}{4\|b\|_{\lipa}}\right)^{1/\al} \frac{1}{\sqrt{n}},\frac{1}{2}\right\}\right)^n|P|
\leq \frac{1}{2}L_Q^n=\frac{1}{2}|F|,
\end{split}
\ee
we obtain that
  \begin{align*}
  &\int_{E\bs B}|(T_{\Omega,\,\b})_b^m(f)(x)|dx\\
  &\quad=
  \left(\int_{F}\om(x)^pdx\right)^{-1/p}\cdot \int_{E\bs B}\left|\int_{F}(b(x)-b(y))^m\frac{\Om(x-y)}{|x-y|^{n-\b}}dy\right|dx
  \\
 &\quad =
  \left(\int_{F}\om(x)^pdx\right)^{-1/p}\cdot \int_{E\bs B}\int_{F}|b(x)-b(y)|^m\frac{|\Om(x-y)|}{|x-y|^{n-\b}}dydx
  \\
  &\quad\geq
  \left(\int_{F}\om(x)^pdx\right)^{-1/p}\cdot \int_{E\bs B}\int_{F\bs (N_x\cap 2P)}|b(x)-b(y)|^m\frac{|\Om(x-y)|}{|x-y|^{n-\b}}dydx
  \\
  &\quad\gtrsim
  \left(\int_{F}\om(x)^pdx\right)^{-1/p}\cdot L_Q^{2n}\cdot |Q|^{\frac{m\al+\b}{n}-1}\tcoa(b;Q)^m
  \\
  &\quad\gtrsim
  \left(\int_{F}\om(x)^pdx\right)^{-1/p}\cdot |Q|^{\frac{m\al+\b}{n}+1}\tcoa(b;Q)^m
  \min\left\{\left(\tcoa(b;Q)\right)^{2n/\al},1\right\}.
  \end{align*}

  Using H\"{o}lder's inequality, we obtain
  \ben
  \begin{split}
  \int_{E\bs B}|(T_{\Omega,\,\b})_b^m(f)(x)|dx
  \leq &
  \left(\int_{E\bs B}|(T_{\Omega,\,\b})_b^m(f)(x)|^q\om^q(x)dx\right)^{1/q}\left(\int_{Q}\om^{-q'}(x)dx\right)^{1/q'}.
  \end{split}
  \een

The above two estimates yield that
\be
\begin{split}
  &\left(\int_{E\bs B}|(T_{\Omega,\,\alpha})_b^m(f)(x)|^q\om^q(x)dx\right)^{1/q}
  \\
  &\quad\geq
  \left(\int_{Q}\om^{-q'}(x)dx\right)^{-1/q'}\cdot
  \left(\int_{F}\om(x)^pdx\right)^{-1/p}\\
  &\quad\quad\times |Q|^{\frac{m\al+\b}{n}+1}\tcoa(b;Q)^m
  \min\left\{\left(\tcoa(b;Q)\right)^{2n/\al},1\right\}
  \\
  &\quad\gtrsim
  \min\left\{\left(\tcoa(b;Q)\right)^{2n/\al},1\right\}\tcoa(b;Q)^m.
\end{split}
\ee
\end{proof}

\subsection{Upper estimates}\label{s3.3}

This part follows by the approach of \cite{GuoWuYang17Arxiv}
with some technique modifications fitting for $\lipa$.

\begin{proposition}\label{proposition, upper estimates}
Let $1<p,q<\infty$, $0< \al\leq n$, $0\leq \b<n$, $m\al+\b<n$, $1/q=1/p-(m\al+\b)/n$, $m\in \mathbb{Z}^+$.
Suppose that $b\in \lipa$, $\Om\in L^{\infty}(\bbS^{n-1})$, $\om\in \apq$.
For any cube $Q$ with $\tcoa(b,Q)\geq \eta_0>0$,
denote by $P, F$ the sets associated with $Q$ mentioned in Proposition \ref{proposition, lower estimates compactness}.
Let $f:=(\int_{F}\om(x)^pdx)^{-1/p}\chi_{F}$.
Then, there exists a positive constant $\d$ such that
\be
\|(T_{\Omega,\,\b})_b^m(f)\|_{L^q(2^{d+1}Q\bs 2^dQ, \om^q)}
\lesssim
2^{-\d dn/p}.
\ee
for sufficient large positive constant $d$, where the implicit constant is independent of $d$, $f$ and $Q$.
\end{proposition}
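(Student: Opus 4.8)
The plan is to combine a crude pointwise estimate valid when $x$ is far from the support of $f$ with a careful bookkeeping of weights; the decay in $d$ will be harvested from the $A_{\infty}$ reverse-doubling property of the dual weight $\om^{-p'}$, which is the one genuinely delicate point. First I would record the geometry: since $\widetilde P=2P$ is the cube associated with $\widetilde Q=2Q$ in Proposition~\ref{proposition, key lower estimates}, the cube $P$ has side length $l_Q$ and $|c_Q-c_P|=2k_0l_Q$, so $f$ is supported in $F\subset 2P\subset MQ$ for a constant $M=M(k_0)$ independent of $Q$ (recall $k_0$ depends only on $\Om$, $n$, $\eta_0$ and $\|b\|_{\lipa}$). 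Fix $d$ with $2^{d}>2M$, so that $F\subset 2^{d}Q$ and, for every $x\in 2^{d+1}Q\bs 2^{d}Q$ and $y\in F$, one has $|x-y|\sim 2^{d}l_Q$. Starting from the kernel representation
\be
(T_{\Om,\,\b})_b^m f(x)=\int_{\rn}\bigl(b(x)-b(y)\bigr)^m\frac{\Om(x-y)}{|x-y|^{n-\b}}\,f(y)\,dy ,
\ee
I would use $|b(x)-b(y)|\le\|b\|_{\lipa}|x-y|^{\al}$ and $|\Om(x-y)|\le\|\Om\|_{L^{\infty}(\bbS^{n-1})}$ to get, for $x$ in the annulus,
\be
\bigl|(T_{\Om,\,\b})_b^m f(x)\bigr|\ls (2^{d}l_Q)^{m\al+\b-n}\int_F|f(y)|\,dy .
\ee

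Next I would pass to the weighted norm. Since $\|f\|_{L^p(\om^p)}=1$, H\"older's inequality gives $\int_F|f|\le(\int_F\om^{-p'})^{1/p'}\le(\int_{MQ}\om^{-p'})^{1/p'}$, while the definition of $\apq$ (Definition~\ref{d-Ap weight}) applied on the cube $2^{d+1}Q$ gives $\om^q(2^{d+1}Q)^{1/q}\le[\om]_{\apq}^{1/q}\,|2^{d+1}Q|^{1/q+1/p'}(\int_{2^{d+1}Q}\om^{-p'})^{-1/p'}$. Integrating the pointwise bound over the annulus and multiplying these estimates,
\be
\bigl\|(T_{\Om,\,\b})_b^m f\bigr\|_{L^q(2^{d+1}Q\bs 2^{d}Q,\,\om^q)}\ls (2^{d}l_Q)^{m\al+\b-n}\,\bigl|2^{d+1}Q\bigr|^{1/q+1/p'}\left(\frac{\int_{MQ}\om^{-p'}}{\int_{2^{d+1}Q}\om^{-p'}}\right)^{1/p'}.
\ee
Because $1/q=1/p-(m\al+\b)/n$ forces $1/q+1/p'=1-(m\al+\b)/n$, the product $(2^{d}l_Q)^{m\al+\b-n}\,|2^{d+1}Q|^{1/q+1/p'}$ collapses to the constant $2^{\,n-m\al-\b}$: every power of $l_Q$ and of $2^{d}$ cancels. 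So no decay survives this ``trivial'' part, and the whole gain must come from the remaining ratio of $\om^{-p'}$-masses.

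To finish, I would invoke that $\om^{-p'}\in A_{p'}\subset A_{\infty}$ (Lemma~\ref{l-Apq weigh prop}(i)) and apply the reverse H\"older inequality (Lemma~\ref{l-Ap weight prop}(iv)) in its standard consequence: there is $\delta>0$, depending only on $n$ and $[\om^{-p'}]_{A_{\infty}}$, such that $\int_E\om^{-p'}\ls(|E|/|R|)^{\delta}\int_R\om^{-p'}$ whenever $E\subset R$. With $E=MQ$ and $R=2^{d+1}Q$ this gives $\bigl(\int_{MQ}\om^{-p'}\big/\int_{2^{d+1}Q}\om^{-p'}\bigr)^{1/p'}\ls 2^{-\delta d n/p'}$, and hence the asserted bound (after renaming $\delta$, and trading $1/p'$ for $1/p$ by shrinking $\delta$ if desired). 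Throughout, the implicit constants and the threshold for $d$ depend only on $n,p,q,m,\al,\b,\|b\|_{\lipa},\|\Om\|_{L^{\infty}(\bbS^{n-1})},[\om]_{\apq}$ and $\eta_0$, never on $Q$ or $d$.

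The main obstacle is exactly the point highlighted above: because the homogeneity of the kernel is matched precisely by the exponent relation $1/q=1/p-(m\al+\b)/n$, a direct size estimate yields only a uniformly bounded quantity with no decay in $d$. One is therefore forced to re-express the $L^1$-mass of $f$ as $(\int_F\om^{-p'})^{1/p'}$, pair it against the $\apq$-bound for $\om^q(2^{d+1}Q)$ so that the $l_Q$- and $2^{d}$-powers annihilate, and only then cash in the $A_{\infty}$ reverse-doubling of $\om^{-p'}$ to extract the factor $2^{-\delta dn/p}$. Keeping the constants independent of $Q$ and $d$ while doing so is a matter of routine bookkeeping once this structure is in place.
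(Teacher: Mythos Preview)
Your argument is correct; the geometry, the pointwise bound on the annulus, the $\apq$/H\"older bookkeeping, and the extraction of decay from the $A_\infty$ property of $\om^{-p'}$ all check out. The exponent cancellation $(2^{d}l_Q)^{m\al+\b-n}\,|2^{d+1}Q|^{1/q+1/p'}=2^{\,n-m\al-\b}$ is exactly right, and the quantitative $A_\infty$ bound $\int_E w\lesssim(|E|/|R|)^{\delta}\int_R w$ for $E\subset R$ is the standard Hölder consequence of Lemma~\ref{l-Ap weight prop}(iv), applied here with $w=\om^{-p'}$.

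Your route, however, differs from the paper's. The paper first replaces $\chi_F$ by $\chi_{2P}$, then expands $(b(x)-b(y))^m$ binomially through the constant $b_{2P}$ and estimates each term $|b(x)-b_{2P}|^{i}\int_{2P}|b(y)-b_{2P}|^{j}\,|x-y|^{\b-n}\,dy$ separately; the $L^q(\om^q)$-norm of $|b-b_{2P}|^{i}$ on $2^{d+1}Q$ is handled by a reverse H\"older step on $\om^q$ that decouples the $b$-factor from the weight, and the final decay is drawn from the openness $\om^p\in A_{p-\delta}$ (Lemma~\ref{l-Ap weight prop}(v)) rather than from the $A_\infty$ property of the dual weight. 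Your approach sidesteps the binomial expansion and the reverse H\"older on $\om^q$ entirely by using the pointwise Lipschitz bound $|b(x)-b(y)|\le\|b\|_{\lipa}|x-y|^{\al}$ and pushing all the weight analysis onto the single ratio $\int_{MQ}\om^{-p'}\big/\int_{2^{d+1}Q}\om^{-p'}$; this is shorter and more transparent for the present statement. The paper's method, on the other hand, separates the roles of $b$ near $x$ and near $y$, which would matter if one tracked finer $b$-dependence, and it localizes the weight argument to $\om^p$ and $\om^q$ rather than the dual $\om^{-p'}$. Both extract the exponential gain from the same underlying self-improvement of Muckenhoupt weights.
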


\begin{proof}
Since $\om^p\in A_p$ and $F\subset 2P$ with $|F|\gtrsim |P|$, we have $\om^p(F)\gtrsim \om^p(2P)\geq \om^p(P)$. Then
\be
f(x)=\left(\int_{F}\om(x)^pdx\right)^{-1/p}\chi_F(x)\lesssim \left(\int_{P}\om(x)^pdx\right)^{-1/p}\chi_{2P}(x).
\ee

A direct calculation yields that
\begin{align}\label{proof, 7}
 & |(T_{\Omega,\,\alpha})_b^m(f)(x)|\nonumber\\
 &\quad  \lesssim
  \left(\int_{P}\om(y)^pdy\right)^{-1/p}\int_{2P}|b(x)-b(y)|^m\frac{|\Omega(x-y)|}{|x-y|^{n-\b}}dy\nonumber
  \\
 &\quad =
  \left(\int_{P}\om(y)^pdy\right)^{-1/p}\int_{2P}|b(x)-b_{2P}+b_{2P}-b(y)|^m\frac{|\Omega(x-y)|}{|x-y|^{n-\b}}dy\nonumber
  \\
  &\quad\leq
  \left(\int_{P}\om(y)^pdy\right)^{-1/p}\sum_{i+j=m}C_m^i|b(x)-b_{2P}|^i\int_{2P}|b_{2P}-b(y)|^j\frac{|\Omega(x-y)|}{|x-y|^{n-\b}}dy.
\end{align}
For sufficiently large $d$,
observe that $|x-y|\sim 2^dl_Q$ for $x\in 2^{d+1}Q\bs 2^dQ$ and $y\in 2P$. By Lemma \ref{l-lip norm equiv}, we deduce that
\begin{align}\label{proof, 8}
  \int_{2P}|b_{2P}-b(y)|^j\frac{|\Omega(x-y)|}{|x-y|^{n-\beta}}dy
\lesssim &
\frac{\|\Om\|_{L^{\infty}(\bbS^{n-1})}}{2^{d(n-\b)}|P|^{1-n/\b}}\int_{2P}|b_{2P}-b(y)|^jdy\nonumber
\\
= &
\frac{\|\Om\|_{L^{\infty}(\bbS^{n-1})}}{2^{d(n-\b)}|P|^{-n/\b}}\frac{1}{|P|}\int_{2P}|b_{2P}-b(y)|^jdy\nonumber
\\
\lesssim &
\frac{\|\Om\|_{L^{\infty}(\bbS^{n-1})}|P|^{\frac{j\al}{n}}}{2^{d(n-\b)}|P|^{-n/\b}}\|b\|_{\lipa}^j.
\end{align}
Since $\om^q\in A^q$, there exists a small positive constant  $\ep\leq \ep_n/[\om^q]_{A_{\infty}}$,
such that
\be
\left(\frac{1}{|\tQ|}\int_{\tQ}\om^{q(1+\ep)}(x)dx\right)^{\frac{1}{1+\ep}}\leq \frac{2}{|\tQ|}\int_{\tQ}\om^q(x)dx\ \ \text{for all cubes}\  \tQ.
\ee
From this and the H\"{o}lder inequality, we obtain
\begin{align}\label{proof, 9}
  &\left\||b-b_{2P}|^i\right\|_{L^q(2^{d+1}Q\bs 2^dQ, \om^q)}\nonumber\\
  &\quad\leq
  \left\||b-b_{2P}|^i\right\|_{L^q(2^{d+1}Q, \om^q)}\nonumber
  \\
  &\quad\leq
  \left(\int_{2^{d+v}P}|b(x)-b_{2P}|^{iq}\om^q(x)dx\right)^{1/q}
  \nonumber\\
  &\quad\leq
  |2^{d+v}P|^{1/q}\left(\frac{1}{|2^{d+v}P|}\int_{2^{d+v}P}|b(x)-b_{2P}|^{iq(1+\ep)'}dx\right)^{\frac{1}{q(1+\ep)'}}\nonumber\\
  &\quad\quad\times
  \left(\frac{1}{|2^{d+v}P|}\int_{2^{d+v}P}\om(x)^{q(1+\ep)}dx\right)^{\frac{1}{q(1+\ep)}}
 \nonumber \\
 &\quad \lesssim
  |2^{d+v}P|^{1/q}\left(\frac{1}{|2^{d+v}P|}\int_{2^{d+v}P}|b(x)-b_{2P}|^{iq(1+\ep)'}dx\right)^{\frac{1}{q(1+\ep)'}}\nonumber\\
  &\quad\quad\times
  \left(\frac{1}{|2^{d+v}P|}\int_{2^{d+v}P}\om(x)^{q}dx\right)^{\frac{1}{q}},
  \end{align}
where $v$ is a positive constant independent of $Q$ such that $2Q\subset 2^vP$.
By the fact
\be
|b(x)-b_{2P}|\lesssim (2^dl_P)^{\al}=2^{d\al}|P|^{\frac{\al}{n}},\ \  x\in 2^{d+v}P,
\ee
we have
\be
\begin{split}
  \left(\frac{1}{|2^{d+v}P|}\int_{2^{d+v}P}|b(x)-b_{2P}|^{iq(1+\ep)'}dx\right)^{\frac{1}{q(1+\ep)'}}
 \lesssim
  2^{id\al}|P|^{\frac{i\al}{n}}.
\end{split}
\ee
Combining this with (\ref{proof, 7}), (\ref{proof, 8}), (\ref{proof, 9}) yields that
\be
\begin{split}
  &\|(T_{\Omega,\,\b})_b^m(f)\|_{L^q(2^{d+1}Q\bs 2^dQ, \om^q)}\\
  &\quad\lesssim
  \sum_{i+j=m}\frac{|2^{d+v}P|^{1/q}|P|^{\frac{j\al}{n}}2^{id\al}|P|^{\frac{i\al}{n}}}{2^{d(n-\b)}|P|^{-n/\b}}\left(\frac{1}{|2^{d+v}P|}\int_{2^{d+v}P}\om(x)^{q}dx\right)^{\frac{1}{q}}
  \left(\int_{P}\om(x)^pdx\right)^{-1/p}
  \\
 &\quad \lesssim
  2^{dn(1/q-1+(\b+m\al)/n)}\left(\frac{1}{|2^{d}P|}\int_{2^{d}P}\om(x)^{q}dx\right)^{\frac{1}{q}}\left(\frac{1}{|P|}\int_{P}\om(x)^pdx\right)^{-1/p}.
\end{split}
\ee
By Lemma \ref{l-Ap weight prop} (v) and (ii), there exists a small constant $\d>0$, such that $\om^p\in A_{p-\delta}$ and
\be
\int_{2^{d}P}\om(x)^pdx\leq 2^{dn(p-\d)}[\om^p]_{A_{p-\d}}\int_{P}\om(x)^{p}dx,
\ee
which implies
\be
\left(\frac{1}{|P|}\int_{P}\om(x)^pdx\right)^{-1/p}
\lesssim
2^{-dn/p}2^{dn(1-\d/p)}\left(\frac{1}{|2^{d}P|}\int_{2^{d}P}\om(x)^{p}dx\right)^{-1/p}.
\ee
Thus,
\begin{align}\label{proof, 10}
  &\|(T_{\Omega,\,\alpha})_b^m(f)\|_{L^q(2^{d+1}Q\bs 2^dQ, \om^q)}
  \nonumber\\
 & \quad\lesssim
  2^{dn(1/q-1+(\b+m\al)/n)}2^{-dn/p}2^{dn(1-\d/p)}\nonumber\\
  &\quad\quad\times
  \left(\frac{1}{|2^{d}P|}\int_{2^{d}P}\om(x)^{q}dx\right)^{\frac{1}{q}}\left(\frac{1}{|2^{d}P|}\int_{2^{d}P}\om(x)^{p}dx\right)^{-1/p}
 \nonumber \\
 & \quad\lesssim
  2^{-\d dn/p}\left(\frac{1}{|2^{d}P|}\int_{2^{d}P}\om(x)^{q}dx\right)^{\frac{1}{q}}\left(\frac{1}{|2^{d}P|}\int_{2^{d}P}\om(x)^{p}dx\right)^{-1/p}
\end{align}
By the definition of $A_{p,q}$, we obtain
\be
\left(\frac{1}{|2^{d}P|}\int_{2^{d}P}\om(x)^qdx\right)^{1/q}\left(\frac{1}{|2^{d}P|}\int_{2^{d}P}\om(x)^{-p'}dx\right)^{1/p'}\lesssim 1.
\ee
This together with the following inequality
\be
1\lesssim \left(\frac{1}{|2^{d}P|}\int_{2^{d}P}\om(x)^{-p'}dx\right)^{1/p'}\left(\frac{1}{|2^{d}P|}\int_{2^{d}P}\om(x)^{p}dx\right)^{1/p}
\ee
yields that
\be
\left(\frac{1}{|2^{d}P|}\int_{2^{d}P}\om(x)^{q}dx\right)^{\frac{1}{q}}\left(\frac{1}{|2^{d}P|}\int_{2^{d}P}\om(x)^{p}dx\right)^{-1/p}\lesssim 1.
\ee
From this and (\ref{proof, 10}), we get the desired estimate
\be
\|(T_{\Omega,\,\b})_b^m(f)\|_{L^q(2^{d+1}Q\bs 2^dQ, \om^q)}
\lesssim
2^{-\d dn/p}.
\ee
\end{proof}

\subsection{Proof of $(1)\Longrightarrow (2)$ in Theorem \ref{theorem, characterization of compactness}}
\label{s3.4}

This part follows by the approach of \cite{GuoWuYang17Arxiv}.
Noting that $\Om\in L^{\infty}(\mathbb S^{n-1})$ is not assumed in Theorem \ref{theorem, characterization of compactness},
a reduction of $\Om$ is needed.

First, we need the following proposition for reduction.
\begin{proposition}\label{proposition, boundedness of multilinear commutator}
Let $1<p,q<\infty$, $0< \al\leq 1$, $0\leq \b<n$, $m\al+\b<n$, $1/q=1/p-(m\al+\b)/n$ and $m\in \mathbb{Z}^+$.
Suppose $r'\in [1,p)$, $\Om\in L^r(\bbS^{n-1})$, $\om^{r'}\in A_{\frac{p}{r'},\frac{q}{r'}}$.
For a  vector-valued function $\vec{b}:=(b_1,b_2,\cdots,b_m)$, $b_j\in \bmoa$, we have
  \be
  \|(T_{\Omega,\,\b})_{\vec b}^mf\|_{L^q(\om^q)}\lesssim \|\Om\|_{L^r(\bbS^{n-1})}\prod_{j=1}^m\|b_j\|_{\bmoa}\|f\|_{L^p(\om^p)},
  \ee
  where for suitable function $f$,
  $$(T_{\Omega,\,\b})_{\vec b}^mf(x):=\int_{\mathbb R^n}\prod_{j=1}^n[b_j(x)-b_j(y)]\frac{\Omega(x-y)}{|x-y|^{n-\beta}}f(y)dy.$$
\end{proposition}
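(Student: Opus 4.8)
The plan is to reduce the weighted estimate for the multilinear commutator $(T_{\Omega,\beta})^m_{\vec b}$ with $\Omega\in L^r(\bbS^{n-1})$ to two ingredients: first, a pointwise-in-nature control of the commutator by iterated fractional maximal-type operators and a fractional integral built from $|\Omega|$; and second, the weighted boundedness of those auxiliary operators, which follows from the hypothesis $\omega^{r'}\in A_{p/r',\,q/r'}$. The fact that $b_j\in\bmoa=\lipa$ (Lemma \ref{l-lip norm equiv}) means the ``BMO-type'' oscillations one normally exploits are replaced by the genuinely stronger Lipschitz estimates; in particular $|b_j(x)-b_j(y)|\le\|b_j\|_{\lipa}|x-y|^{\al}$, which is the single structural inequality driving the whole argument.

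First I would record the pointwise reduction. Expanding $\prod_{j=1}^m[b_j(x)-b_j(y)]$ directly and using $|b_j(x)-b_j(y)|\le\|b_j\|_{\lipa}|x-y|^\al$ for each factor, one gets
\be
\big|(T_{\Omega,\beta})^m_{\vec b}f(x)\big|\lesssim \Big(\prod_{j=1}^m\|b_j\|_{\lipa}\Big)\int_{\rn}\frac{|\Omega(x-y)|}{|x-y|^{n-(m\al+\b)}}|f(y)|\,dy=:\Big(\prod_{j=1}^m\|b_j\|_{\lipa}\Big)\, T_{|\Omega|,\,m\al+\b}(|f|)(x),
\ee
since $|x-y|^{m\al}\cdot|x-y|^{-(n-\b)}=|x-y|^{-(n-(m\al+\b))}$ and $m\al+\b<n$ by hypothesis. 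Thus everything collapses to the linear (non-commutator) fractional integral $T_{|\Omega|,\gamma}$ with $\gamma:=m\al+\b$ and nonnegative kernel $|\Omega|\in L^r(\bbS^{n-1})$, and it suffices to prove
\be
\|T_{|\Omega|,\gamma}g\|_{L^q(\omega^q)}\lesssim\|\Omega\|_{L^r(\bbS^{n-1})}\|g\|_{L^p(\omega^p)},\qquad \tfrac1q=\tfrac1p-\tfrac\gamma n.
\ee

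Next I would prove this weighted bound for the rough fractional integral. The idea is a Hölder splitting on the kernel: write $|\Omega(x-y)|/|x-y|^{n-\gamma}$ and apply Hölder in the $y$-variable over dyadic annuli, or more cleanly invoke the known pointwise domination $T_{|\Omega|,\gamma}g(x)\lesssim \|\Omega\|_{L^r}\, I_{r',\gamma}g(x)$, where $I_{r',\gamma}$ is the fractional-integral-type operator whose boundedness $L^{p/r'}\to L^{q/r'}$ (with the exponent relation $\frac{r'}{q}=\frac{r'}{p}-\frac{\gamma}{n}$, i.e. $\frac1q=\frac1p-\frac{\gamma}{r'n}\cdot r'/\ldots$ — one checks the scaling matches because the hypothesis is stated for $p/r',q/r'$) holds for weights in $A_{p/r',\,q/r'}$ by the classical Muckenhoupt–Wheeden theorem (\cite{MuckenhouptWheeden74TAMS}). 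Concretely: by Hölder on $\bbS^{n-1}$, $\int_{|x-y|\sim 2^k}\frac{|\Omega(x-y)|}{|x-y|^{n-\gamma}}|g(y)|dy\lesssim \|\Omega\|_{L^r}\,2^{k\gamma}\big(2^{-kn}\int_{|x-y|\lesssim 2^k}|g(y)|^{r'}dy\big)^{1/r'}$, and summing in $k$ realizes $T_{|\Omega|,\gamma}g$ as controlled by $\|\Omega\|_{L^r}$ times a fractional integral of $|g|^{r'}$ raised to $1/r'$; since $\omega^{r'}\in A_{p/r',q/r'}$, the weighted $L^{p/r'}\to L^{q/r'}$ bound for that fractional integral applied to $|g|^{r'}$ gives exactly $\|g\|_{L^p(\omega^p)}$ on the right. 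Combining with the pointwise reduction finishes the proof.

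The main obstacle — really the only nonroutine point — is matching the exponents and the weight class correctly through the $r'$-th power substitution: one must verify that $\tfrac1q=\tfrac1p-\tfrac{m\al+\b}{n}$ together with $\omega^{r'}\in A_{p/r',\,q/r'}$ are precisely the hypotheses needed so that, after replacing $g$ by $|g|^{r'}$ and $(p,q)$ by $(p/r',q/r')$, the classical weighted fractional-integral inequality applies with the correct gain $\gamma/n$; this is a bookkeeping computation but it is where the specific form of the assumption on $\omega$ is used. Everything else (the multinomial expansion, the Lipschitz bound on each $b_j$, the annular Hölder estimate for rough kernels) is standard and can be cited from \cite{MuckenhouptWheeden74TAMS} and \cite{Grafakos08}.
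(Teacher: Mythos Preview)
Your proposal is correct and follows essentially the same approach as the paper: the pointwise domination $|(T_{\Omega,\beta})^m_{\vec b}f(x)|\lesssim \big(\prod_j\|b_j\|_{\lipa}\big)\,T_{|\Omega|,\,m\al+\b}(|f|)(x)$ via the Lipschitz estimate on each $b_j$ is exactly the paper's first (and only substantive) step, after which the paper simply cites the weighted $L^p(\omega^p)\to L^q(\omega^q)$ boundedness of the rough fractional integral $T_{|\Omega|,\,m\al+\b}$ from \cite[Theorem~3.4.2]{LDY2007}. Your second paragraph sketches a proof of that cited result via H\"older on dyadic annuli and the $r'$-power substitution, which is the standard argument behind the reference; so you are not taking a different route, just unpacking what the paper black-boxes.
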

\begin{proof} By Lemma \ref{l-lip norm equiv},
  \be
  \begin{split}
    |(T_{\Omega,\,\b})_{\vec b}^m(f)(x)|
   & \ls
    \prod_{j=1}^m\|b_j\|_{\bmoa}\int_{\rn}\frac{|\Om(x-y)|}{|x-y|^{n-\b-m\al}}|f(y)|dy\\
    &=\prod_{j=1}^m\|b_j\|_{\bmoa}
    \cdot T_{|\Omega|,\,\b+m\al}(|f|)(x).
  \end{split}
  \ee
  Then the desired conclusion follows by a classical result of $T_{|\Omega|,\,\b+m\al}$
  (see \cite[Theorem 3.4.2]{LDY2007}).
\end{proof}

To prove $(1)\Longrightarrow (2)$ in Theorem \ref{theorem, characterization of compactness},
we only need to deal with the case that $b$ is real-valued.
If $(T_{\Omega,\,\beta})_b^{m}$ is a compact operator from $L^p(\om^p)$ to $L^q(\om^q)$, then from Theorem \ref{theorem, necessity of boundedness}, $b\in \bmoa$.
To show $b\in\cmoa$, we use a contradiction
argument. Observe that if $b\notin \cmoa$,  $b$ does not satisfy at least one
of (1)-(3) in Definition \ref{def, cmoa}. We further consider the following three cases.

First suppose that $b$ does not satisfy condition (1) in Definition \ref{def, cmoa}.
There exist $\th_0\in (0,1)$ and a sequence of cubes $\{Q_j\}_{j=1}^{\infty}$ with $|Q_j|\searrow 0$ as $j\rightarrow \infty$, such that
\be
\coa(b;Q_j)\geq \th_0.
\ee
Given a cube $Q$ with $\coa(b;Q)\geq \th_0$, let $E$, $F$ with $|E|=|F|\geq \widetilde{C} \min\left\{\left(\coa(b;Q)\right)^{2n/\al},1\right\}|Q|$
be the cubes mentioned in Proposition \ref{proposition, lower estimates compactness} with $\eta_0=\th_0$.
Let $f:=(\int_{F}\om(x)^pdx)^{-1/p}\chi_F$.
Since $\Om\in L^r(\bbS^{n-1})$, for any $\ep>0$ there exists a function $\Om_{\ep}$ on $\bbS^{n-1}$ such that
\be
  \Om_{\ep}\in L^{\infty}(\bbS^{n-1})\ \ \mbox{and} \  \|\Om-\Om_{\ep}\|_{L^r(\bbS^{n-1})}<\ep.
\ee

Applying Propositions \ref{proposition, lower estimates compactness}, there exists a  positive constant $C_0$
independent of $Q$,
such that
\ben\label{proof, 11}
\|(T_{\Omega,\,\b})_b^m(f)\|_{L^q(E\bs B,\om^q)}\geq 2C_0\min\left\{\left(\coa(b;Q)\right)^{2n/\al},1\right\}\coa(b;Q)^m\ \text{for}\  |B|\leq \frac{|E|}{2}.
\een
Next, by $\Omega-\Omega_\epsilon\in L^r(\mathbb S^{n-1})$, Proposition \ref{proposition, boundedness of multilinear commutator} and $b\in \bmoa$,
we can choose sufficiently small constant $\ep_0>0$ such that
\be
\begin{split}
  \|(T_{\Omega,\,\b})_b^m(f)-(T_{\Omega_{\ep_0},\,\b})_b^m(f)\|_{L^q(\bbR^n, \om^q)}
  \leq &
  C\|\Om-\Om_{\ep_0}\|_{L^r(\bbS^{n-1})}\|b\|_{\bmoa}^m\|f\|_{L^p(\om^p)}
  \\
  \leq &
  C\|\Om-\Om_{\ep_0}\|_{L^r(\bbS^{n-1})}\|b\|_{\bmoa}^m\leq \frac{C_0\th_0^{m+\frac{2n}{\al}}}{2}.
\end{split}
\ee
Then, applying Propositions \ref{proposition, upper estimates} with $\eta_0=\th_0$ for $(T_{\Omega_{\ep_0},\,\b})_b^m$,
there exists a positive constant $d_0$
independent of $Q$,
such that
\be
\|(T_{\Omega_{\ep_0},\,\b})_b^m(f)\|_{L^q(\bbR^n\bs 2^{d_0}Q, \om^q)}
\leq
\frac{C_0\th_0^{m+\frac{2n}{\al}}}{2}=\frac{C_0\min\left\{\th_0^{2n/\al},1\right\}\th_0^m}{2}.
\ee
The above two estimates yield that
\begin{align}\label{proof, 12}
&\|(T_{\Omega,\,\b})_b^m(f)\|_{L^q(\bbR^n\bs 2^{d_0}Q, \om^q)}\nonumber\\
&\quad\leq
\|(T_{\Omega_{\ep_0},\,\b})_b^m(f)\|_{L^q(\bbR^n\bs 2^{d_0}Q, \om^q)}
+\|(T_{\Omega,\,\b})_b^m(f)-(T_{\Omega_{\ep_0},\,\b})_b^m(f)\|_{L^q(\bbR^n, \om^q)}\nonumber
\\
 &\quad\leq
C_0\th_0^{m+\frac{2n}{\al}}.
\end{align}

Take a subsequence of $\{Q_j\}_{j=1}^{\infty}$, also denoted by $\{Q_{j}\}_{j=1}^{\infty}$, such that
\be
\frac{|Q_{j+1}|}{|Q_{j}|}\leq \min\{\widetilde{C}^2\th_0^{\frac{2n}{\al}}/4, 2^{-2d_0n}\}.
\ee
Denote
$
B_j: =\left(\frac{|Q_{j-1}|}{|Q_{j}|}\right)^{\frac{1}{2n}}Q_{j},\ \ j\geq 2.
$
It is easy to check
\be
\left(\frac{|Q_{j-1}|}{|Q_{j}|}\right)^{\frac{1}{2n}}\geq 2^{d_0},\ \
|B_{j+1}|=\left(\frac{|Q_{j+1}|}{|Q_{j}|}\right)^{\frac{1}{2}}Q_{j}\leq \widetilde{C}\th_0^{\frac{n}{\al}}|Q_j|/2\leq |E_j|/2,
\ee
where $E_j$ is the corresponding cube associated with $Q_j$ as mentioned in Proposition \ref{proposition, lower estimates compactness}.
Moreover, for any $k>j$, we have
\be
2^{d_0}Q_k\subset B_k,\ \ |B_k|\leq |E_{j}|/2.
\ee

Denote by $F_j$ the set associated with $Q_j$ as mentioned in Proposition \ref{proposition, lower estimates compactness}.
Let
\be
f_j:=\left(\int_{F_j}\om(x)^pdx\right)^{-1/p}\chi_{F_j}.\
\ee
Again, from (\ref{proof, 11}) and (\ref{proof, 12}),
for any $k>j\geq 1$, we obtain
\be
\|(T_{\Omega,\,\b})_b^m(f_j)\|_{L^q(E_j\bs B_k,\om^q)}\geq
2C_0\min\left\{\left(\coa(b;Q_j)\right)^{2n/\al},1\right\}\coa(b;Q_j)^m
\geq
2C_0\th_0^{m+\frac{2n}{\al}}
\ee
and
\be
\|(T_{\Omega,\,\b})_b^m(f_{k})\|_{L^q(E_j\bs B_k, \om^q)}
\leq
\|(T_{\Omega,\,\b})_b^m(f_{k})\|_{L^q(\bbR^n\bs 2^{d_0}Q_{k}, \om^q)}\leq C_0\th_0^{m+\frac{2n}{\al}}.
\ee
Hence,
\be
\begin{split}
 & \|(T_{\Omega,\,\b})_b^m(f_j)-(T_{\Omega,\,\b})_b^m(f_{k})\|_{L^q(\bbR^n, \om^q)}\\
& \quad \geq
  \|(T_{\Omega,\,\b})_b^m(f_j)-(T_{\Omega,\,\b})_b^m(f_{k})\|_{L^q(E_j\bs B_k, \om^q)}
  \\
&\quad  \geq
  \|(T_{\Omega,\,\b})_b^m(f_j)\|_{L^q(E_j\bs B_k, \om^q)}
  -\|(T_{\Omega,\,\b})_b^m(f_{k})\|_{L^q(E_j\bs B_k, \om^q)}\geq C_0\th_0^{m+\frac{2n}{\al}},
\end{split}
\ee
which leads to a contradiction to the compactness of $(T_{\Omega,\,\b})_b^m$.

A similar contradiction argument is valid for the proof of condition (2), we omit the details here.
It remains to prove $b$ satisfies condition (3) of Definition \ref{def, cmoa}.

Assume that $b$ satisfies (2) but does not satisfy (3).
Hence, there exist $\th_1\in (0,1)$ and a sequence of cube $\{\tQ_j\}_{j=1}^{\infty}$ with $|\tQ_j|\lesssim 1$  such that
\be
\tQ_j\cap R_j=\emptyset,\ \  \mathcal O_\alpha(b,\tQ_j)\geq \th_1,
\ee
where $R_j:=[2^{-j},2^j]^n$.
Denote by $\tE_j$, $\tF_j$ the sets associated with $\tQ_j$
as mentioned in Proposition \ref{proposition, key lower estimates} with $\eta_0=\th_1$.
Let
\be
\widetilde{f}_j:=\left(\int_{\tF_j}\om(x)^pdx\right)^{-1/p}\chi_{\tF_j}.\
\ee

Then, there exists a positive constant $C_1$
independent of $Q$,
such that
\ben\label{proof, 13}
\|(T_{\Omega,\,\b})_b^m(\widetilde{f}_j)\|_{L^q(\widetilde{E}_j,\om^q)}\geq 2C_1\min\left\{\left(\coa(b;\tQ_j)\right)^{2n/\al},1\right\}\coa(b;\tQ_j)^m.
\een

By the similar method as above, there exists a positive constant $d_1$
independent of $\tQ_j$,
such that
\ben\label{proof, 14}
\|(T_{\Omega,\,\b})_b^m(\widetilde{f}_j)\|_{L^q(\bbR^n\bs 2^{d_1}\tQ_j, \om^q)}
\leq
C_1\min\left\{\th_1^{2n/\al},1\right\}\th_1^m=C_1\th_1^{m+\frac{2n}{\al}}.
\een
Take $d_2\geq d_1$ such that $\tE_j\subset 2^{d_2}\tQ_j$,
and a subsequence of $\{\tQ_j\}_{j=1}^{\infty}$, still denoted by $\{\tQ_j\}_{j=1}^{\infty}$, such that
\be
2^{d_2}\tQ_i\cap 2^{d_2}\tQ_j=\emptyset,\ \ i\neq j.
\ee
For any $k\neq j$, note that $2^{d_1}\tQ_k\cap \tE_j\subset 2^{d_2}\tQ_k\cap 2^{d_2}\tQ_j=\emptyset$,
then (\ref{proof, 13}) implise
\be
\begin{split}
\|(T_{\Omega,\,\b})_b^m(\widetilde{f}_j)\|_{L^q(\tE_j\bs 2^{d_1}\tQ_k,\om^q)}
= &\|(T_{\Omega,\,\b})_b^m(\widetilde{f}_j)\|_{L^q(\tE_j,\om^q)}
\\
\geq &
2C_1\min\left\{\left(\coa(b;\widetilde Q_j)\right)^{2n/\al},1\right\}\coa(b;\widetilde Q_j)^m
\geq
2C_1\th_1^{m+\frac{2n}{\al}}.
\end{split}
\ee
From this and (\ref{proof, 14}) we get
\be
\begin{split}
 & \|(T_{\Omega,\,\b})_b^m(\widetilde{f}_j)-(T_{\Omega,\,\b})_b^m(\widetilde{f}_{k})\|_{L^q(\bbR^n, \om^q)}\\
 & \quad\geq
  \|(T_{\Omega,\,\b})_b^m(\widetilde{f}_j)-(T_{\Omega,\,\b})_b^m(\widetilde{f}_{k})\|_{L^q(\tE_j\bs 2^{d_1}\tQ_k, \om^q)}
  \\
 &\quad \geq
  \|(T_{\Omega,\,\b})_b^m(\widetilde{f}_j)\|_{L^q(\tE_j\bs 2^{d_1}\tQ_k, \om^q)}
  -\|(T_{\Omega,\,\b})_b^m(\widetilde{f}_{k})\|_{L^q(\tE_j\bs 2^{d_1}\tQ_k, \om^q)}\geq C_1\th_1^{m+\frac{2n}{\al}},
\end{split}
\ee
which leads to a contradiction to the compactness of $(T_{\Omega,\,\b})_b^m$.

\section{sufficiency of the compactness of commutators}\label{s4}

In this section, we establish the compactness of the iterated commutators mentioned in Theorem \ref{theorem, characterization of compactness}.
We follow some approach in \cite{GuoWuYang17Arxiv}, see also \cite{KrantzLi01JMAAb}.
Here, since the kernel $\Om$ is rough, we will use Proposition \ref{proposition, boundedness of multilinear commutator}
to give a reduction for $\Om$, regaining the smoothness of kernel in the further proof.

Firstly, we recall the following weighted
Fr\'echet-Kolmogorov theorem obtained in \cite{ClopCruz13AASFM}.
\begin{lemma}\label{lemma, criterion of precompact}
  Let $p\in (1,\infty)$ and $\om\in A_p$. A subset $E$ of $L^p(\om)$ is precompact (or totally bounded)
  if the following statements hold:
  \bn[(a)]
  \item $E$ is bounded, i.e., $sup_{f\in E}\|f\|_{L^p(\om)}\lesssim 1$;
  \item $E$ uniformly vanishes at infinity, that is,
  \be
  \lim_{N\rightarrow \infty}\int_{|x|>N}|f(x)|^p\om(x)dx\rightarrow 0,\ \text{uniformly for all}\ f\in E.
  \ee
  \item $E$ is uniformly equicontinuous, that is,
  \be
  \lim_{\r\rightarrow 0}\sup_{y\in B(0,\r)}\int_{\bbR^n}|f(x+y)-f(x)|^p\om(x)dx\rightarrow 0,\ \text{uniformly for all}\ f\in E.
  \ee
  \en
\end{lemma}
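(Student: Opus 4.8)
The plan is to prove the sufficiency by the classical Riesz--Kolmogorov argument, adapted so that the weight $\om\in\apn$ is handled honestly: one approximates an arbitrary $f\in E$, \emph{uniformly in $f$}, by members of a family that is visibly totally bounded in $L^p(\om)$. Since $L^p(\om)$ is a Banach space, total boundedness of $E$ is equivalent to $E$ being precompact, so this suffices.

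First I would fix a mollifier $\va\in C_c^\infty(\rn)$ with $\va\ge0$, $\mathrm{supp}\,\va\subset B(0,1)$ and $\int_{\rn}\va=1$, and set $\va_\r(x):=\r^{-n}\va(x/\r)$. Since $\om\in\apn$ forces $\om^{1-p'}\in L^1_{\mathrm{loc}}(\rn)$, H\"older's inequality gives $L^p(\om)\subset L^1_{\mathrm{loc}}(\rn)$, so each $f\in E$ has a well-defined smooth mollification $f\ast\va_\r$. The core approximation step is
\be
\sup_{f\in E}\|f\ast\va_\r-f\|_{L^p(\om)}\longrightarrow 0\qquad(\r\to0).
\ee
To obtain it, write $(f\ast\va_\r)(x)-f(x)=\int_{B(0,\r)}\va_\r(y)\bigl(f(x-y)-f(x)\bigr)\,dy$, apply Minkowski's integral inequality in $L^p(\om)$, and bound by $\sup_{|y|\le\r}\|f(\cdot-y)-f(\cdot)\|_{L^p(\om)}$; because $B(0,\r)$ is symmetric, this supremum equals $\sup_{|h|\le\r}\|f(\cdot+h)-f(\cdot)\|_{L^p(\om)}$, which is precisely the quantity in hypothesis (c), hence uniform in $f$ and tending to $0$. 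Combining this with hypothesis (b) and the triangle inequality (use (b) to choose $N$, then the mollification estimate to choose $\r$), for every $\ep>0$ one can pick $N$ and $\r$ so that $\|f-(f\ast\va_\r)\chi_{B(0,N)}\|_{L^p(\om)}<\ep$ for all $f\in E$.

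It remains to show the family $\mathcal G:=\{(f\ast\va_\r)\chi_{B(0,N)}:f\in E\}$ is totally bounded in $L^p(\om)$, and this is where $\om\in\apn$ is genuinely used. From $\om^{1-p'}\in L^1_{\mathrm{loc}}(\rn)$ and H\"older's inequality, for $x\in\overline{B(0,N)}$ one has $|(f\ast\va_\r)(x)|\ls\r^{-n}\int_{B(x,\r)}|f|\ls C(\r,N,\om)\|f\|_{L^p(\om)}\ls1$ by (a), and differentiating the convolution the same estimate holds for $\nabla(f\ast\va_\r)$ up to an extra factor $\r^{-1}$. Thus $\mathcal G$, regarded in $C(\overline{B(0,N)})$, is uniformly bounded and equi-Lipschitz, hence precompact in the uniform norm by Arzel\`a--Ascoli. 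Since $\om$ is locally integrable, $\om(B(0,N))<\infty$ and $\|g-h\|_{L^p(B(0,N),\om)}\le\om(B(0,N))^{1/p}\|g-h\|_{\infty}$, so precompactness in $C(\overline{B(0,N)})$ passes to precompactness in $L^p(B(0,N),\om)$; extending by zero outside $B(0,N)$ shows $\mathcal G$ is totally bounded in $L^p(\om)$. As $E$ lies within $\ep$ of $\mathcal G$ for every $\ep>0$, $E$ is totally bounded, and therefore precompact.

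The step I expect to be the main obstacle is not a single deep estimate but the bookkeeping with the weight: one must verify that hypothesis (c) really controls $\|f(\cdot-y)-f(\cdot)\|_{L^p(\om)}$ (only symmetry of $B(0,\r)$ is needed there) and, more essentially, that the pointwise and gradient bounds underpinning the Arzel\`a--Ascoli step survive with only $\|f\|_{L^p(\om)}$ under control, which is exactly where $\om^{1-p'}\in L^1_{\mathrm{loc}}(\rn)$, i.e.\ $\om\in\apn$, enters. The remaining reductions are the routine three-$\ep$ splitting.
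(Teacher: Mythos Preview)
Your argument is correct. Note, however, that the paper does not supply its own proof of this lemma: it merely quotes the result from Clop--Cruz \cite{ClopCruz13AASFM} (``we recall the following weighted Fr\'echet--Kolmogorov theorem obtained in \cite{ClopCruz13AASFM}''). So there is no ``paper's proof'' to compare against; you have written out what is essentially the classical Riesz--Kolmogorov argument, with the $A_p$ hypothesis used exactly where it must be---namely, to guarantee $\om^{1-p'}\in L^1_{\mathrm{loc}}(\rn)$ so that H\"older's inequality yields uniform $L^\infty$ and Lipschitz bounds on $f\ast\va_\r$ over $\overline{B(0,N)}$, enabling the Arzel\`a--Ascoli step. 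Your bookkeeping is sound: the decomposition $f-(f\ast\va_\r)\chi_{B(0,N)}=f\chi_{\{|x|>N\}}+(f-f\ast\va_\r)\chi_{B(0,N)}$ lets you choose $N$ from (b) and then $\r$ from (c) independently, and the passage from precompactness in $C(\overline{B(0,N)})$ to precompactness in $L^p(\om)$ via $\om(B(0,N))<\infty$ is valid. This is precisely the proof one would expect to find in the cited reference.
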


\begin{proof}[Proof of $(2)\Longrightarrow (1)$ in Theorem \ref{theorem, characterization of compactness}]
For $\al=1$, we have $(T_{\Omega,\,\b})_b^m=0$ by Theorem \ref{theorem, characterization of cmoa}.

For $\al\in (0,1)$, by the definition of compact operator, we will verify the set
$$A(\Om,b):=\{(T_{\Omega,\,\b})_b^m(f): \|f\|_{L^p(\om^p)}\leq 1\}$$
 is precompact.

Suppose $b\in \cmoa$.
If $r<\infty$, for any $\ep>0$, there exist $b_{\ep}\in C_c^{\infty}(\bbR^n)$ and $\Om_{\ep}\in Lip_1(\bbS^{n-1})$
such that
\be
\|b-b_{\ep}\|_{\bmoa}<\ep\ \ \ and\ \ \ \|\Om-\Om_{\ep}\|_{L^r(\bbS^{n-1})}<\ep.
\ee
From this and Proposition \ref{proposition, boundedness of multilinear commutator}, we obtain
\be
\|(T_{\Omega,\,\b})_b^m-(T_{\Omega,\,\b})_{b_{\ep}}^m\|_{L^p(\om^p)\rightarrow L^q(\om^q)}
\lesssim \ep \|\Om\|_{L^r(\bbS^{n-1})}\|b\|_{\bmoa}^{m-1}
\ee
and
\be
\|(T_{\Omega,\,\b})_{b_{\ep}}^m-(T_{\Omega_{\ep},\,\b})_{b_{\ep}}^m\|_{L^p(\om^p)\rightarrow L^q(\om^q)}
\lesssim \|\Om-\Om_{\ep}\|_{L^r(\bbS^{n-1})}\|b_{\ep}\|_{\bmoa}^{m}.
\ee

If $r=\infty$, then $r'=1$ and hence $\om\in A_{p,q}$.
One can choose a constant $\tilde{r}<\infty$ such that $\tilde{r}'\in (1,p)$ and
$
\om^{\tilde{r}'}\in A_{\frac{p}{\tilde{r}'},\frac{q}{\tilde{r}'}}.
$
For any $\tilde{\ep}>0$ there exist $b_{\tilde{\ep}}\in C_c^{\infty}(\bbR^n)$ and $\Om_{\tilde{\ep}}\in Lip_1(\bbS^{n-1})$
such that
\be
\|b-b_{\tilde{\ep}}\|_{\bmoa}<\tilde{\ep}\ \ \ and\ \ \ \|\Om-\Om_{\tilde{\ep}}\|_{L^{\tilde{r}}(\bbS^{n-1})}<\tilde{\ep}.
\ee
Then, Proposition \ref{proposition, boundedness of multilinear commutator} yields that
\be
\|(T_{\Omega,\,\b})_b^m-(T_{\Omega,\,\b})_{b_{\tilde{\ep}}}^m\|_{L^p(\om^p)\rightarrow L^q(\om^q)}
\lesssim \tilde{\ep} \|\Om\|_{L^{\tilde{r}}(\bbS^{n-1})}\|b\|_{\bmoa}^{m-1}
\ee
and
\be
\|(T_{\Omega,\,\b})_{b_{\tilde{\ep}}}^m-(T_{\Omega_{\tilde{\ep}},\,\b})_{b_{\tilde{\ep}}}^m\|_{L^p(\om^p)\rightarrow L^q(\om^q)}
\lesssim \|\Om-\Om_{\tilde{\ep}}\|_{L^{\tilde{r}}(\bbS^{n-1})}\|b_{\tilde{\ep}}\|_{\bmoa}^{m}.
\ee
Thus, in order to verify the set $A(\Om,b)$ is precompact, or equivalently, totally bounded on $L^q(\om^q)$,
we only need to consider the case of $b\in C_c^\fz(\rn)$ and $\Om\in Lip_1(\bbS^{n-1})$.

Next, take $\va\in C_c^{\infty}(\bbR^n)$ supported on $B(0,1)$ such that $\va=1$ on $B(0,1/2)$, $0\leq \va\leq 1$.
Let $\va_{\d}(x):=\va(\frac{x}{\d})$, $K_{\b}^{\d}(x): =\frac{\Om(x)}{|x|^{n-\b}}\cdot (1-\va_{\d}(x))$,
\be
A(K^{\d},b):=\{(T_{K_{\b}^{\d}})_b^m(f): \|f\|_{L^p(\om^p)}\leq 1\}, \ \
T_{K_{\b}^{\d}}:=\int_{\bbR^n}K_{\b}^{\d}(x-y)f(y)dy\ \ \ \text{for all}\ x\notin \text{supp}f.
\ee

Since $b\in C_c^{\infty}(\bbR^n)$ and $\Om\in L^{\infty}(\mathbb S^{n-1})$, we have
\be
\begin{split}
  |(T_{K_\b^{\d}})_b^mf(x)-(T_{\Omega,\,\b})_b^mf(x)|
  \leq &
  \left|\int_{\bbR^n}(b(x)-b(y))^m\va_{\d}(x-y)\frac{\Om(x-y)}{|x-y|^{n-\b}}f(y)dy\right|
  \\
  \lesssim &
  \int_{|x-y|\leq \d}\frac{|f(y)|}{|x-y|^{n-\b-m}}dy.
\end{split}
\ee
By the usual dyadic decomposition method, we get
\be
\begin{split}
  \int_{|x-y|\leq \d}\frac{|f(y)|}{|x-y|^{n-\b-m}}dy
= &
\sum_{j=0}^{\infty}\int_{2^{-(j+1)}\d \leq|x-y|\leq 2^{-j}\d}\frac{|f(y)|}{|x-y|^{n-\b-m}}dy
\\
\leq &
\sum_{j=0}^{\infty}(2^{-j}\d)^{m(1-\al)}\int_{2^{-(j+1)}\d \leq|x-y|\leq 2^{-j}\d}\frac{|f(y)|}{|x-y|^{n-\b-m\al}}dy
\\
\lesssim &
\sum_{j=0}^{\infty}2^{-jm(1-\al)}\d^{m(1-\al)} M_{\b+m\al}(f)(x)\lesssim \d^{m(1-\al)}M_{\b+m\al}(f)(x),
\end{split}
\ee
where for $\gamma\in(0, n)$, $M_\gamma f(x)$ is the fractional maximal function defined by
$$M_\gamma f(x):=\sup_{Q\ni x}\frac1{|Q|^{1-\frac\gamma n}}\int_Q|f(y)|\,dy.$$
From the above two estimates we obtain
\be
\|(T_{K_\b^{\d}})_b^mf-(T_{\Omega,\,\b})_b^mf\|_{L^q(\om^q)}\lesssim \d^{m(1-\al)}\|M_{\b+m\al}f\|_{L^q(\om^q)}\lesssim \d^{m(1-\al)}\|f\|_{L^p(\om^p)}.
\ee
Since $\d$ can be chosen arbitrarily small, we only need to verify $A(K^{\d},b)$ is totally bounded,
where $\d>0$, $b\in C_c^{\infty}(\bbR^n)$.
Now, we finish the reduction argument for $b$ and $\Om$.

Since $\Om\in Lip_1(\bbS^{n-1})$, one can see that
\be
|K_\b^{\d}(x)-K_\b^{\d}(x')|\lesssim \frac{|x-x'|}{|x|^{n-\b+1}},\ \ \ 2|x-x'|\leq |x|.
\ee
We only need to check the conditions (a)-(c) of Lemma \ref{lemma, criterion of precompact}
for $A(K^{\d},b)$.

Without loss of generality, we assume that $b$ is supported in a cube $Q$ centered at the origin.
By the boundedness of $(T_{K_{\b}^{\d}})_b^m$, $A(K^{\d},b)$ is a bounded set in $L^q(\om^q)$, which verifies condition (a).

For $x\in (2Q)^c$,
\be
\begin{split}
  |(T_{K_{\b}^{\d}})_b^m(f)(x)|
  = &
  \left|\int_{\bbR^n}(b(y))^mK_{\b}^{\d}(x-y)f(y)dy\right|
  \\
  \lesssim &
  \frac{\|b\|_{L^{\infty}}^m}{|x|^{n-\b}}\int_{Q}|f(y)|dy
  \\
  \leq &
  \frac{\|b\|_{L^{\infty}}^m}{|x|^{n-\b}}\|f\|_{L^p(\om^p)}\left(\int_{Q}\om^{-p'}(x)dx\right)^{1/p'}.
\end{split}
\ee
Take $N>2$,
\begin{align}\label{proof, 15}
  &\left(\int_{(2^NQ)^c}|(T_{K_{\b}^{\d}})_b^m(f)(x)|^q\om(x)^qdx\right)^{1/q}\nonumber\\
  &\quad\lesssim
  \left(\int_{(2^NQ)^c}\frac{\om(x)^q}{|x|^{q(n-\b)}}dx\right)^{1/q}\left(\int_{Q}\om^{-p'}(x)dx\right)^{1/p'}.
\end{align}
Since $\om^q\in A_{\frac{q(n-\b-m\al)}{n}}$, we obtain
\be
\int_{2^{d}Q}\om(x)^qdx\leq 2^{dq(n-\b-m\al)}[\om^q]_{A_{\frac{q(n-\b-m\al)}{n}}}\int_{Q}\om(x)^{q}dx,
\ee
which implies
\be
\int_{2^{d+1}Q\bs 2^dQ}\frac{\om(x)^q}{|x|^{q(n-\b)}}dx
\lesssim
\frac{2^{dq(n-\b-m\al)}}{2^{dq(n-\b)}}=2^{-dqm\al}.
\ee
This and (\ref{proof, 15}) yield that
\be
\begin{split}
  \left(\int_{(2^NQ)^c}|(T_{K_{\b}^{\d}})_b^m(f)(x)|^q\om(x)^qdx\right)^{1/q}
  \lesssim &
  \left(\sum_{j=0}^{\infty}\int_{2^{N+j+1}Q\bs 2^{N+j}Q}\frac{\om(x)^q}{|x|^{q(n-\b)}}dx\right)^{1/q}
  \\
  \lesssim &
  \left(\sum_{j=0}^{\infty}2^{-(N+j)qm\al}\right)^{1/q}
  =
  2^{-Nm\al}\left(\sum_{j=0}^{\infty}2^{-jqm\al}\right)^{1/q},
\end{split}
\ee
which tends to zero as $N$ tends to infinity. This proves condition (b).

It remains to prove that $A(K^{\d},b)$ is equicontinuous in $L^q(\om^q)$. Assume that $\|f\|_{\lpwp}=1$ and take $z\in \bbR^n$ with $|z|\leq \frac{\d}{8}$. Then
\be
\begin{split}
  &(T_{K_\b^{\d}})_b^m(f)(x+z)-(T_{K_\b^{\d}})_b^m(f)(x)
  \\
  &\quad=
  \int_{\bbR^n}(b(x+z)-b(y))^m(K_{\b}^{\d}(x+z-y)-K_{\b}^{\d}(x-y))f(y)dy
  \\
  &\quad\quad +
  \int_{\bbR^n}\big((b(x+z)-b(y))^m-(b(x)-b(y))^m\big)K_{\b}^{\d}(x-y)f(y)dy
  \\
&\quad=: I_1(x,z)+I_2(x,z).
\end{split}
\ee
We start the estimate of the first term.
Observing that $K_{\b}^{\d}(x+z-y)$ and $K_{\b}^{\d}(x-y)$ both vanish when $|x-y|\leq \frac{\d}{4}$,
then
\begin{align*}
  |I_1(x,z)|
  \leq &
  \int_{|x-y|\geq \d/4}|b(x+z)-b(y)|^m|K_{\b}^{\d}(x+z-y)-K_{\b}^{\d}(x-y)|\cdot|f(y)|dy
  \\
  \lesssim &
  \int_{|x-y|\geq \d/4}\frac{|z|}{|x-y|^{n-\b-m\al+1}}|f(y)|dy
  \\
  \lesssim &
  \sum_{j=0}^{\infty}\int_{2^{j-2}\d\leq |x-y|\leq 2^{j-1}\d}\frac{|z|}{|x-y|^{n-\b-m\al+1}}|f(y)|dy
  \\
  \leq &
  \sum_{j=0}^{\infty}\frac{2^{2-j}|z|}{\d}
  \int_{2^{j-2}\d\leq |x-y|\leq 2^{j-1}\d}\frac{1}{|x-y|^{n-\b-m\al}}|f(y)|dy
  \\
  \lesssim &
  \sum_{j=0}^{\infty}\frac{2^{2-j}|z|}{\d}M_{\b+m\al}(f)(x)
  \lesssim \frac{|z|}{\d}M_{\b+m\al}(f)(x).
\end{align*}

Hence,
\be
\|I_1(\cdot,z)\|_{L^q(\om^q)}\lesssim \frac{|z|}{\d}\|M_{\b+m\al}f\|_{L^q(\om^q)}
\lesssim \frac{|z|}{\d}\|f\|_{L^p(\om^p)}\leq \frac{|z|}{\d}.
\ee
Finally,
\be
\begin{split}
  |I_2(x,z)|
  = &
 \left| \int_{\bbR^n}\big((b(x+z)-b(y))^m-(b(x)-b(y))^m\big)K_{\b}^{\d}(x-y)f(y)dy\right|
  \\
  \lesssim &
  |z|\int_{\bbR^n}|K_{\b}^{\d}(x-y)|\cdot|f(y)|dy\lesssim \int_{\bbR^n}|z|\frac{|f(y)|}{|x-y|^{n-\b-m\al}}dy.
\end{split}
\ee
Hence,
\be
\|I_{2}(\cdot,z)\|_{L^q(\om^q)}\lesssim |z|\|I_{\b+m\al}f\|_{L^q(\om^q)}\lesssim |z|\|f\|_{L^p(\om^p)}\leq |z|.
\ee
It follows from above estimates of $I_1$, $I_{2}$ that
\be
  \|(T_{K_\b^{\d}})_b^m(f)(\cdot+z)-(T_{K_\b^{\d}})_b^m(f)(\cdot)\|_{L^q(\om^q)}\rightarrow 0,
\ee
as $|z|\rightarrow 0$, uniformly for all $f$ with $\|f\|_{L^p(\om^p)}\leq 1$.
\end{proof}

\end{document}